\def\blx@maxline{77}
\title{Intersection problem for Droms RAAGs}%
\author[1]{Jordi Delgado\thanks{\url{jdelgado@crm.cat}}}
\author[2]{Enric Ventura\thanks{\url{enric.ventura@upc.edu}}}
\author[3]{Alexander Zakharov\thanks{\url{zakhar.sasha@gmail.com}}}
\affil[1,3]{Centre of Mathematics, University of Porto}
\affil[2]{Department de Matem\`atiques, Universitat Polit\`ecnica de Catalunya}
\begin{document}

\maketitle

\begin{abstract}\noindent
    We solve the subgroup intersection problem (\SIP) for any RAAG $G$ of Droms type (\ie with defining graph not containing induced squares or paths of length $3$):
    there is an algorithm which, given finite sets of generators for two subgroups $H,K\leqslant G$, decides whether $H\cap K$ is finitely generated or not, and, in the affirmative case, it computes a set of generators for $H\cap K$. Taking advantage of the recursive characterization of Droms groups, the proof consists in separately showing that the solvability of \SIP\ passes through free products, and through direct products with free-abelian groups.
    We note that most of RAAGs are not Howson, and many (\eg $\Free[2]\times \Free[2]$) even have unsolvable~\SIP.
\end{abstract}

\section{Introduction}

In group theory, the study of intersections of subgroups has been recurrently considered in the literature. Roughly speaking, the problem is \emph{``given subgroups $H,K\leqslant G$, find $H\cap K$''}. However, in the context of Geometric Group Theory, where groups may be infinite, or even non finitely generated, one needs to be more precise about the word \emph{find}, specially if one is interested in the computational point of view.

A group
is said to satisfy \defin{Howson's property} --- or to be \emph{Howson}, for short --- if the intersection of any two (and so, finitely many) finitely generated subgroups is again finitely generated.

Classical examples of Howson
groups include
free-abelian, and free groups. In~$\ZZ^m$ Howson's property is trivial, whereas for free groups it was proved by Howson himself in~\cite{howson_intersection_1954}, where he also gave an algorithm to compute generators for the intersection.

Not far from these groups one can find examples without the Howson property: consider the group $\Free[2]\times \ZZ =\pres{ a,b}{-} \times \pres{ t}{-}$ and the subgroups $H=\langle a,b\rangle$ and $K=\langle ta, b\rangle$; both are clearly 2-generated
but
$
H\cap K
\,=\,
\Set{ w(a,b) \st |w|_a =0 } = \normalcl{b} \leqslant \Free[2] \,,
$
which is not finitely generated. In this context, it is natural to consider the following decision problems.

\medskip
\begin{named}[Subgroup intersection problem, $\SIP(G)$]
Given words $u_1, \ldots, u_n, v_1, \ldots ,v_m$ in the generators of~$G$, decide whether the subgroup intersection $\gen{u_1, \ldots, u_n} \cap \gen{v_1, \ldots ,v_m}$ is finitely generated or not; and, in the affirmative case, compute a generating set for this intersection.
\end{named}


\medskip
\begin{named}[Coset intersection problem, $\CIP(G)$] Given a finite set of words $w,w', u_1, \ldots, u_n, v_1,$ $\ldots ,v_m$ in the generators of $G$, decide whether the coset intersection $w\gen{u_1, \ldots, u_n} \cap w'\gen{v_1, \ldots ,v_m}$ is empty or not; and in the negative case, compute a coset representative.
\end{named}

In~\cite{delgado_algorithmic_2013}, Delgado--Ventura prove that direct products of free-abelian and free groups have both \SIP\ and \CIP\ solvable. The goal of the present paper is to extend
the algebraic arguments given there, in order to achieve similar properties for a much wider family of groups. To this end it is convenient to consider the following variations
for a general finitely presented group~$G$.

\medskip
\begin{named}[Twofold intersection problem, $\TIP(G)$]
Solve both $\SIP(G)$ and $\CIP(G)$.
\end{named}

\medskip
\begin{named}[Extended subgroup intersection problem, $\ESIP(G)$]
Given a finite set of words $u_1, \ldots, u_n,\allowbreak v_1, \ldots ,v_m,w,w'$ in the generators of $G$, decide whether the intersection of the subgroups $H=\gen{u_1, \ldots, u_n}$ and $K=\gen{v_1, \ldots ,v_m}$ is finitely generated or not; and in the affirmative case:
\begin{enumerate*}[ind]
  \item compute a generating set for $H\cap K$, and
  \item decide whether the coset intersection $wH \cap w'K$ is empty or not (denoted by $\CIP_{\fg}$), computing a coset representative in case it is not.
\end{enumerate*}
\end{named}

%

\bigskip

The main result in this paper is about \emph{finitely generated} PC-groups (\aka right-angled Artin groups, or RAAGs).
This prominent class of groups is closely linked to some crucial examples of groups, notably Bestvina and Brady's example of a group which is homologically finite (of type $\mathsf{FP}$) but not geometrically finite (in fact not of type $\mathsf{F_2}$), and Mihailova's example of a group with unsolvable subgroup membership problem.  More recently D.Wise et al. developed a method of showing that a vast amount of groups are virtual subgroups of RAAGs. Wise used this method to solve some well-known problems in group theory, like Baumslag's conjecture on residual finiteness of one-relator groups with torsion. Furthermore, building on the work of Wise and Kahn--Markovic, I. Agol proved the famous virtually fibred conjecture (the last main open problem in 3-manifold theory due to Thurston), by showing that fundamental groups of closed, irreducible, hyperbolic 3-manifolds are virtual subgroups of RAAGs.

\begin{defn}
A group $\Gpi$ is said to be \defin{partially commutative} (a \defin{PC-group}, for short) if it admits a presentation of the form
\begin{equation} \label{eq: pres PC-group}
\Pres{\Geni}{[\geni_i,\geni_j] = \trivial , \text{ whenever } \set{\geni_i,\geni_j} \in \Edgi} \,,
\end{equation}
for some (not necessarily finite) simple graph $\Gri =(\Geni, \Edgi)$. In this case, we say that $\Gpi$ is presented by the \defin{commutation graph} $\Gri$ and write $G=\pcg{\Gri}$. Then, we say that~\eqref{eq: pres PC-group} is a \defin{graphical presentation} for $G$, and $\Geni$ is a \defin{graphical generating set} (or \defin{basis}) for $G$. In the f.g.~case (i.e., when $\Geni$ is finite) we shall refer to $\pcg{\Gri}$ as a \emph{right-angled Artin group} (a \emph{RAAG}, for short).
\end{defn}


A subgraph $\Grii$ of a graph $\Gri = (\Geni,\Edgi)$ is said to be \defin{full} if it has exactly the edges that appear in~$\Gri$ over its vertex set, say $\Genii \subseteq \Geni$; in this case, $\Grii$ is called the \defin{full subgraph of $\Gri$ spanned by $\Genii$} and we write $\Grii \leqslant \Gri$. When none of the graphs belonging to a certain family $\Fami$ appear as a full subgraph of $\Gri$, we say that $\Gri$ is \defin{$\Fami$-free}.

PC-groups can be thought as a family of groups interpolating between two extreme cases:
free-abelian groups (presented by complete graphs), and free groups (presented by edgeless graphs); having as  \emph{graphical generating set} precisely the standard free-abelian
and free bases,
respectively.
More generally,
the PC-group presented by the disjoint union of graphs $\Gri \sqcup \Grii$ is the free product $\pcg{\Gri} *  \pcg{\Grii}$, and the PC-group presented by the join of graphs $\Gri \join \Grii$ (obtained by adding to ${\Gri \sqcup \Grii}$ every edge joining a vertex in $\Gri$ to a vertex in~ $\Grii$) is the direct product $\pcg{\Gri} \times  \pcg{\Grii}$.

Despite the extreme (free and free-abelian) cases being subgroup-closed, this is not the case for PC-groups. Droms characterized the finitely generated PC-groups having this property in the following well known result.

\begin{thm}[\longcite{droms_subgroups_1987}]\label{thm: droms subgroups}
Let $\Gri$ be a finite graph. Then, every subgroup of $\pcg{\Gri}$ is again a (possibly non finitely generated) PC-group if and only if $\Gri$ is~\mbox{$\{\Path[4],\Cycle[4]\}$-free}. \qed
\end{thm}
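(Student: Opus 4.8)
\textbf{Sufficiency.} I would prove the two implications separately, using throughout the recursive description of $\{\Path[4],\Cycle[4]\}$-free graphs. The backbone is a graph-theoretic claim: a finite graph $\Gri$ on at least two vertices is $\{\Path[4],\Cycle[4]\}$-free if and only if it is either disconnected (a disjoint union of two smaller $\{\Path[4],\Cycle[4]\}$-free graphs) or connected with a universal vertex $v$, i.e.\ a cone $\Gri = \{v\}\join\Delta$ over a smaller $\{\Path[4],\Cycle[4]\}$-free graph $\Delta$. The only nontrivial point is that a connected such graph has a universal vertex: being $\Path[4]$-free (a cograph) and connected it is a join $\Gri_1\join\Gri_2$, and if neither factor were complete a non-edge in each would span an induced $\Cycle[4]$; hence one factor is a clique, and any of its vertices is universal. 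With this in hand I would induct on the number of vertices. The base case $\pcg{\Gri}\cong\ZZ$ is subgroup-closed. If $\Gri$ is disconnected, $\pcg{\Gri}$ is a free product of smaller Droms groups, subgroup-closed by induction; any subgroup splits by the Kurosh subgroup theorem as a free product of a free group and of subgroups of the factors, so taking the disjoint union of the corresponding commutation graphs (allowing infinitely many components for non-f.g.\ subgroups) exhibits it as a PC-group.

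\textbf{The direct-product step.} If $\Gri$ is connected then $\pcg{\Gri}\cong\ZZ\times H$ with $H = \pcg{\Delta}$ a smaller Droms group, and it remains to show that every $L\leqslant\ZZ\times H$ is a PC-group. Let $p$ be the projection onto $H$ and set $K = L\cap(\ZZ\times\{1\})\leqslant\ZZ$. If $K$ is trivial, $p|_L$ is injective and $L\cong p(L)\leqslant H$ is a PC-group by induction. Otherwise $K = c\ZZ$ with $c\geqslant 1$; writing $\bar L = p(L) = \pcg{\Lambda}$, the class modulo $c$ of the $\ZZ$-coordinate of a preimage defines a homomorphism $\psi\colon\bar L\to\ZZ/c\ZZ$ with $L = \{(n,x) : x\in\bar L,\ n\equiv\psi(x)\ (\mathrm{mod}\ c)\}$. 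As $\bar L$ is a PC-group its abelianization is free abelian, so $\psi$ lifts to some $\tilde\psi\colon\bar L\to\ZZ$, and then $(k,x)\mapsto(ck+\tilde\psi(x),x)$ is an isomorphism $\ZZ\times\bar L\to L$; hence $L\cong\ZZ\times\pcg{\Lambda}\cong\pcg{\{v\}\join\Lambda}$ is a PC-group. This is exactly the template the present paper upgrades from isomorphism type to the algorithmic \SIP.

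\textbf{Necessity.} Since a full subgraph $\Grii\leqslant\Gri$ realises $\pcg{\Grii}$ as a retract, hence a subgroup, of $\pcg{\Gri}$, and since subgroups of subgroup-closed groups are again subgroup-closed, it suffices to show that $\pcg{\Cycle[4]}$ and $\pcg{\Path[4]}$ are not subgroup-closed. For $\Cycle[4]$ we have $\pcg{\Cycle[4]}\cong\Free[2]\times\Free[2]$, whose Stallings--Bieri subgroup --- the kernel of the map to $\ZZ$ sending every generator to $1$ --- is finitely generated but not finitely presented; as finitely generated PC-groups (RAAGs) are finitely presented, it is not a PC-group. The case $\Path[4]$ is the real obstacle, and where I expect most of the effort: $\pcg{\Path[4]}$ is coherent (its defining tree is chordal), so finite presentability cannot separate a witness, and one must instead produce a finitely generated subgroup and certify that it is not isomorphic to \emph{any} PC-group by a presentation-independent invariant --- for instance the lattice of its centralizers, the structure of its cohomology ring, or its Bieri--Neumann--Strebel invariant, measured against the explicit such data available for RAAGs. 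Locating a clean witness for $\Path[4]$ together with a self-contained non-PC certificate is the step I would expect to be hardest.
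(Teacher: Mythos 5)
You should first note that the paper does not prove this statement at all: it is quoted from Droms with a \qed, and its role in the paper is only to supply the recursive description of \Cref{cor: recursive Droms graphs}. So the benchmark is Droms' own argument. Your sufficiency direction is correct and is essentially that argument: your dichotomy (a finite $\{\Path[4],\Cycle[4]\}$-free graph on $\geq 2$ vertices is disconnected or has a universal vertex, proved via the cograph join decomposition and the observation that two non-edges across a join span an induced $\Cycle[4]$) is exactly the ``crucial lemma'' the paper quotes before \Cref{cor: recursive Droms graphs}; the Kurosh step is fine since PC-groups are allowed infinite graphs; and your direct-product step is sound: the mod-$c$ character $\psi$ is well defined because two $p$-preimages differ by an element of $K$, it lifts to $\tilde\psi\colon \bar L\to\ZZ$ because the abelianization of a PC-group is free abelian, and $(k,x)\mapsto (ck+\tilde\psi(x),x)$ is indeed an isomorphism $\ZZ\times\bar L\to L$. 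For comparison, the paper's \Cref{lem: subgroups of Z^m x disc Droms} obtains the analogous splitting by a different mechanism --- lift a graphical basis of $H\prio$ arbitrarily and use centrality of the kernel to see that the commutation relations are automatically respected --- which is slightly more direct than your character-lifting, but both are valid. Your $\Cycle[4]$ witness is also correct: the Stallings--Bieri kernel of $\Free[2]\times\Free[2]\to\ZZ$ is finitely generated and not finitely presented, while a finitely generated PC-group has finite defining graph (its abelianization has rank $|V\Gri|$) and hence is finitely presented.

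The genuine gap is the $\Path[4]$ half of necessity, which you describe but do not prove. Without it, what you have established is only: $\{\Path[4],\Cycle[4]\}$-free $\Rightarrow$ every subgroup is a PC-group, and every subgroup PC $\Rightarrow$ $\Cycle[4]$-free; the equivalence fails to be proved for every graph whose only obstruction is an induced $\Path[4]$ --- beginning with $\Path[4]$ itself, which is precisely the case separating Droms groups from the strictly larger chordal/coherent class of \Cref{thm: coherent <-> chordal}. You correctly diagnose why the $\Cycle[4]$-style certificate cannot work here ($\pcg{\Path[4]}$ is coherent, so every finitely generated subgroup is finitely presented), but naming candidate invariants (centralizer lattices, cohomology rings, BNS invariants) is a plan, not an argument: none of them is computed for any concrete witness subgroup, and no witness is exhibited. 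In Droms' paper this direction is carried by an explicit finitely generated subgroup of $\pcg{\Path[4]}$ together with a structural proof that no PC-group is isomorphic to it; that construction is the real mathematical content of the ``only if'' direction, and as your proposal stands it is missing.
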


Here, $\Path[n]$ stands for the \defin{path graph} on $n$ vertices; and $\Cycle[n]$ stands for the \defin{cycle graph} on $n$ vertices.
Accordingly, we say that a graph is a \defin{Droms graph} if it is finite and $\{\Path[4], \Cycle[4]\}$-free, and a PC-group
is a \defin{Droms group} if it is presented by a Droms graph.

\begin{rem}
We note that finite $\{\Path[4], \Cycle[4]\}$-free graphs have received
diverse denominations throughout the literature, including
\emph{comparability graphs of forests} (in~\cite{wolk_comparability_1962}),
\emph{transitive forests} (in~\cite{rodaro_fixed_2013}),
\emph{trivially perfect graphs} (in~\cite{BrandstadtGraphClassesSurvey1987}), and
\emph{quasi-threshold graphs} (in~\cite{jing-ho_quasi-threshold_1996}).
\end{rem}

\subsection{Results}

The main result in the present paper is the following theorem.

\begin{thm}\label{thm: SIP Droms}
Every Droms group has solvable \ESIP\ (and, in particular, solvable \SIP). \qed
\end{thm}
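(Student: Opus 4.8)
The plan is to argue by structural induction along the recursive description of Droms graphs. Since a finite graph is $\{\Path[4],\Cycle[4]\}$-free exactly when every connected induced subgraph has a universal vertex, each Droms graph is either a single vertex, the disjoint union of two smaller Droms graphs, or the join of a nonempty clique of universal vertices with a smaller Droms graph. Through the dictionary recalled above (disjoint union $\leftrightarrow$ free product, join $\leftrightarrow$ direct product), this means that every Droms group is obtained from copies of $\ZZ$ by alternately forming free products $G_1 * G_2$ and direct products $\ZZ^m \times G$ with a free-abelian group. The base cases --- free and free-abelian groups, and more generally the groups $\Free[n] \times \ZZ^m$ --- already have solvable $\ESIP$ by the work of Delgado--Ventura. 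It therefore suffices to prove two inheritance statements: that solvability of $\ESIP$ passes (i) from $G_1, G_2$ to $G_1 * G_2$, and (ii) from $G$ to $\ZZ^m \times G$. I would carry the combined problem $\ESIP$ through the induction precisely because the coset part $\CIP_{\fg}$ of the factors is what feeds the subgroup-intersection computation of the product.

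\emph{Free products.} For $G = G_1 * G_2$ I would study subgroups through their action on the Bass--Serre tree. A finitely generated $H \leqslant G$ has a canonical finite graph-of-groups (Kurosh) decomposition whose vertex groups are subgroups of conjugates of $G_1$ and $G_2$ and whose edge groups are trivial. To obtain $H \cap K$ I would form the fiber product of the decomposing data for $H$ and $K$: its vertex groups are intersections of the form $(g H_i g^{-1}) \cap (g' K_j g'^{-1})$ living inside a conjugate of a single factor, and these are computable by invoking $\SIP$ together with $\CIP_{\fg}$ in that factor --- the coset information decides whether two such vertex subgroups meet at all, and in which conjugate. Finite generation of $H \cap K$ then amounts to finiteness of the resulting quotient graph-of-groups together with finite generation of each of its vertex groups (inherited from the factors), and in the affirmative case a generating set is assembled from the finitely many vertex generators and the free part coming from the fundamental group of the graph.

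\emph{Direct product with a free-abelian group.} For $G = \ZZ^m \times G_0$, let $p \colon G \to G_0$ and $\pi \colon G \to \ZZ^m$ be the projections. The relevant data of a finitely generated $H \leqslant G$ are the subgroup $p(H) \leqslant G_0$, the central subgroup $H \cap (\ZZ^m \times 1)$, and an affine ``weight'' assignment recording, for each element of $p(H)$, the $\ZZ^m$-components with which it is realized in $H$. An element $(v, g)$ lies in $H \cap K$ precisely when $g \in p(H) \cap p(K)$ and the central weights inherited from $H$ and from $K$ can be made to coincide; the latter is a linear (coset) condition in $\ZZ^m$ threaded along cosets of $p(H) \cap p(K)$, for which the $\CIP_{\fg}$ part of $\ESIP(G_0)$ is exactly what is needed. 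Using the inductive hypothesis one first computes $p(H) \cap p(K)$ and a generating set, and then restricts to the subgroup cut out by the weight-matching condition, lifting generators back to $G$ by effective linear algebra over $\ZZ^m$.

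The main obstacle is the finite-generation decision in step (ii), which is genuinely delicate: the example $\Free[2] \times \ZZ$ shows that $H \cap K$ can fail to be finitely generated even though both factors are Howson. Concretely, the weight-matching condition exhibits $p(H \cap K)$ as the preimage in the finitely generated group $P = p(H) \cap p(K)$ of a coset under a homomorphism $P \to \ZZ^r$ recording weight discrepancies, and $H \cap K$ is finitely generated if and only if this preimage is. Deciding finite generation of such a kernel/preimage --- where $P$ is a finitely generated subgroup of the Droms group $G_0$ --- is the heart of the argument; I would resolve it by carrying through the recursion a suitably enriched induction hypothesis (Stallings-type data decorated with abelian weights, in the spirit of Delgado--Ventura), so that the criterion becomes an effective rank/independence computation combining the inductive generators of $P$ with the linear weight data, and simultaneously yields generators and settles the coset part.
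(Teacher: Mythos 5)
Your overall architecture coincides with the paper's: induction along the recursive Droms decomposition, carrying \ESIP\ (rather than bare \SIP) through both constructions precisely because the $\CIP_{\fg}$ component in the factors is what the product-level computation consumes; and your free-product step is the paper's argument in equivalent language --- the fiber product of Kurosh/graph-of-groups data is exactly Ivanov's junction of wedge automata, whose vertex groups are intersections of conjugates of factor subgroups computed via \SIP\ plus $\CIP_{\fg}$ in the factors, with $H\cap K$ finitely generated if and only if all vertex groups are (the underlying graph is automatically finite, being carved out of the finite product of the primary vertex sets, so the ``finiteness of the quotient graph'' clause in your criterion is vacuous here).

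The genuine gap is in the direct-product step, at the very point you call the heart of the argument. You reduce to deciding finite generation of the preimage in $P = p(H)\cap p(K)$ of a coset under a weight-discrepancy homomorphism $P\to\ZZ^r$, and then defer to ``a suitably enriched induction hypothesis'' producing ``an effective rank/independence computation.'' But no enrichment of the inductive data is what makes this decidable, and rank data by itself cannot settle finite generation of such preimages: the chain \emph{finitely generated $\Leftrightarrow$ finite index $\Leftrightarrow$ full rank after abelianization} fails for arbitrary subgroups (e.g.\ $\gen{a}\leqslant \Free[2]$ is finitely generated of infinite index), and validating it requires three specific inputs absent from your plan: (a) the discrepancy map factors through the abelianization of $P$, so the subgroup in question contains $[P,P]$, hence is \emph{normal} in $P$, and is nontrivial in the free-product direction because commutators of elements from distinct free factors lie in $p(H\cap K)$ (\Cref{lem: intersection inclusions}); (b) $P$, being a finitely generated subgroup of a Droms group, is itself Droms, so it admits a primary decomposition $\ZZ^{n_4}\times P_0$ with $P_0$ a nontrivial free product once $P$ is non-abelian; and (c) B.\,Baumslag's theorem \cite{baumslag_intersections_1966} that a finitely generated subgroup of a nontrivial free product containing a nontrivial normal subgroup has finite index --- this is what converts finite generation into the computable full-rank condition. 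Separately, you silently assume $P$ is finitely generated: when the inductive \SIP\ call reports $p(H)\cap p(K)$ non--finitely generated, you must still conclude that $H\cap K$ is non--finitely generated, and this is not formal, since $p(H\cap K)$ may be a proper subgroup of $P$; the paper proves it (\Cref{lem: infinitely gen -> infinitely gen}) by a dedicated argument exploiting $\set{\Path[4],\Cycle[4]}$-freeness of the commutation graphs of subgroups. These two Droms-specific facts are the actual mathematical content of the direct-product inheritance, and without them your step (ii) does not go through.
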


\medskip

The strategy of the proof arises from the following crucial lemma given by Droms on the way of proving~\Cref{thm: droms subgroups}:
\emph{Every nonempty Droms graph is either disconnected, or it contains a central vertex (\ie one vertex adjacent to any other vertex).}

This easily provides the following recursive definition of the Droms family (with both the graphical and the algebraic counterparts):

\begin{cor}[Droms, \cite{droms_subgroups_1987}]\label{cor: recursive Droms graphs}
The family of Droms graphs (\resp Droms groups) can be recursively defined as the smallest family~\DROMSGR\ (\resp $\DROMS$) satisfying the following rules:
\vspace{-10pt}

\noindent\parbox[t]{0.5\textwidth}{\raggedright
\begin{enumerate}[label=\textsf{\textup{[D\arabic*]}}]
\item \label{item: k0 in Droms} $\Kgraph[0]\in \DROMSGR$;
\item \label{item: binary disjoint unions in Droms} $\Gri_{1}, \Gri_{2} \in \DROMSGR \Imp \Gri_{1}  \sqcup \Gri_{2} \in \DROMSGR$;
\item \label{item: sharp cones in Droms} $\Gri \in \DROMSGR \Imp \Kgraph[1] \join \Gri \in \DROMSGR$.
\end{enumerate}}
\noindent\parbox[t]{0.49\textwidth}{\raggedright
\begin{enumerate}[label=\textsf{\textup{[D\arabic*]}}]
\item \label{item: k0 in Droms} $\Trivial\in \DROMS$;
\item \label{item: recursive Droms * Droms} $G_{1}, G_{2} \in \DROMS \Imp G_{1}  * G_{2} \in \DROMS$;
\item \label{item: recursive Z x Droms}$G \in \DROMS \Imp \ZZ \times G \in \DROMS$. \qed
\end{enumerate}}
\end{cor}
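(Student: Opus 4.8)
The plan is to prove the graphical statement first --- that $\DROMSGR$, the smallest family of graphs closed under rules \textsf{[D1]}--\textsf{[D3]}, coincides with the family of Droms graphs --- and then transport the conclusion to groups through the dictionary $\Gri \mapsto \pcg{\Gri}$ recalled above. For the graphical equality I would establish the two inclusions separately. Throughout I would freely use that a full subgraph is an induced subgraph, so that being $\{\Path[4], \Cycle[4]\}$-free is hereditary: every full subgraph of a Droms graph is again a Droms graph.

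For $\DROMSGR \subseteq {}$(Droms graphs), I would check that the family of $\{\Path[4], \Cycle[4]\}$-free graphs is itself closed under the three generating rules; since $\DROMSGR$ is by definition the \emph{smallest} closed family, the inclusion follows at once. Rule \textsf{[D1]} is immediate, as $\Kgraph[0]$ has no vertices. For the other two I would exploit that $\Path[4]$ and $\Cycle[4]$ are both \emph{connected}: any induced copy of either inside a disjoint union $\Gri_{1} \sqcup \Gri_{2}$ would have to meet both summands, yet there are no edges across them, so such a copy would be disconnected --- a contradiction; hence it lies inside one summand, and \textsf{[D2]} preserves $\{\Path[4], \Cycle[4]\}$-freeness. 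For \textsf{[D3]}, let $v$ be the central vertex of $\Kgraph[1] \join \Gri$; any four-vertex full subgraph either avoids $v$, and then sits inside $\Gri$, or contains $v$, and then $v$ is adjacent to the other three vertices. Since neither $\Path[4]$ nor $\Cycle[4]$ possesses a vertex adjacent to all three remaining ones (both have maximum degree $2$), no forbidden subgraph can arise.

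For the reverse inclusion $($Droms graphs$) \subseteq \DROMSGR$ --- the completeness direction, and the only place where Droms's crucial lemma enters --- I would argue by induction on the number of vertices. The empty graph $\Kgraph[0]$ lies in $\DROMSGR$ by \textsf{[D1]}. Given a nonempty Droms graph $\Gri$, the lemma asserts that it is either disconnected or has a central vertex. If $\Gri$ is disconnected, I would group its connected components into two nonempty full subgraphs $\Gri = \Gri_{1} \sqcup \Gri_{2}$; each $\Gri_{i}$ is again $\{\Path[4], \Cycle[4]\}$-free with strictly fewer vertices, hence lies in $\DROMSGR$ by induction, so $\Gri \in \DROMSGR$ by \textsf{[D2]}. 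If instead $\Gri$ has a central vertex $v$, I would write $\Gri = \Kgraph[1] \join \Gri'$, where $\Gri'$ is the full subgraph on the remaining vertices; by induction $\Gri' \in \DROMSGR$, whence $\Gri \in \DROMSGR$ by \textsf{[D3]} (this also covers $\Gri = \Kgraph[1]$, where $\Gri' = \Kgraph[0]$).

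Finally, the algebraic statement would follow mechanically from the graphical one. The assignment $\Gri \mapsto \pcg{\Gri}$ sends $\Kgraph[0]$ to $\Trivial$, a disjoint union $\Gri_{1} \sqcup \Gri_{2}$ to the free product $\pcg{\Gri_{1}} * \pcg{\Gri_{2}}$, and a sharp cone $\Kgraph[1] \join \Gri$ to $\ZZ \times \pcg{\Gri}$ (using $\pcg{\Kgraph[1]} = \ZZ$); thus it carries the graph rules \textsf{[D1]}--\textsf{[D3]} precisely onto their group counterparts. Since a Droms group is by definition $\pcg{\Gri}$ for a Droms graph $\Gri$, applying this correspondence step by step along the construction of $\Gri$ in $\DROMSGR$ shows that $\DROMS = \{\pcg{\Gri} : \Gri \in \DROMSGR\}$ coincides with the family of Droms groups. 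I expect no deep obstacle here: once Droms's lemma is granted the argument is essentially routine, and the only point genuinely requiring care is the soundness step \textsf{[D3]}, which rests on the elementary observation that neither forbidden graph has a universal vertex.
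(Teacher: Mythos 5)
Your proposal is correct and is precisely the argument the paper leaves implicit: the corollary is stated with only a pointer to Droms's lemma (every nonempty Droms graph is disconnected or has a central vertex), and your induction on the number of vertices for completeness, the routine closure checks for soundness (connectedness of $\Path[4]$ and $\Cycle[4]$ for \textsf{[D2]}, absence of a universal vertex for \textsf{[D3]}), and the dictionary $\Gri_1 \sqcup \Gri_2 \mapsto \pcg{\Gri_1} * \pcg{\Gri_2}$, $\Kgraph[1] \join \Gri \mapsto \ZZ \times \pcg{\Gri}$ constitute exactly the omitted verification. No gaps; this matches the paper's intended proof.
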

Our proof of Theorem~\ref{thm: SIP Droms} is based on the following preservability results for the intersection properties we are interested in.

\begin{thm}\label{thm: SIP-center}
Let $G$ be a Droms group. If $G$ has solvable \SIP, then $\ZZ^m\times G$ also has solvable~\SIP. \qed
\end{thm}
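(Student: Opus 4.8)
The plan is to reduce $\SIP(\ZZ^m\times G)$ to $\SIP(G)$ together with a finite‑generation test for kernels of homomorphisms from Droms groups to finitely generated abelian groups. Write elements of $\ZZ^m\times G$ as $t^{\mathbf a}g$ with $\mathbf a\in\ZZ^m$ central and $g\in G$, and let $\pi\colon\ZZ^m\times G\to G$ be the projection onto the second factor. Given $H=\gen{t^{\mathbf a_1}g_1,\dots,t^{\mathbf a_n}g_n}$ I would first record its invariants: the projection $\bar H=\pi(H)=\gen{g_1,\dots,g_n}\leqslant G$, the vertical subgroup $L_H=H\cap\ZZ^m\leqslant\ZZ^m$, and the homomorphism $\psi_H\colon\bar H\to\ZZ^m/L_H$ sending $g$ to the $\ZZ^m$‑part of any preimage $t^{\mathbf b}g\in H$ (well defined modulo $L_H$, since two preimages of $g$ differ by an element of $L_H$, and a homomorphism because $\ZZ^m$ is central). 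Doing the same for $K$, an element $g\in\bar H\cap\bar K$ lies in $\pi(H\cap K)$ exactly when the cosets $\psi_H(g)+L_H$ and $\psi_K(g)+L_K$ of admissible $\ZZ^m$‑parts meet, i.e. $\psi_H(g)\equiv\psi_K(g)\pmod{L_H+L_K}$. Hence, setting $A=\ZZ^m/(L_H+L_K)$ and $\Psi\colon M\to A$, $g\mapsto\psi_H(g)-\psi_K(g)$, on $M:=\bar H\cap\bar K$, one gets $\pi(H\cap K)=\ker\Psi$.

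The decisive structural observation is that $H\cap K$ fits in a short exact sequence $1\to L_H\cap L_K\to H\cap K\to\ker\Psi\to 1$ whose kernel $L_H\cap L_K\leqslant\ZZ^m$ is always finitely generated; therefore $H\cap K$ is finitely generated if and only if $\ker\Psi$ is. Everything thus reduces to deciding finite generation of $\ker\Psi$ and, in the positive case, exhibiting generators. For the invariants I would rely on the effectiveness of the Droms setting: a presentation (hence the abelianization $\bar H^{\mathrm{ab}}$) of the Droms group $\bar H$ is computable, which yields $L_H$ as the subgroup of $\ZZ^m$ generated by the images under $\mathbf e\mapsto\sum_i e_i\mathbf a_i$ of the abelianized relators, that is, of $\ker(\ZZ^n\to\bar H^{\mathrm{ab}})$; and $\psi_H$ is evaluated on an element of $\bar H$ by rewriting it over the $g_i$ (using solvable membership in Droms groups) and reading off the exponent vector. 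I then invoke $\SIP(G)$ on $\bar H,\bar K$: if $M=\bar H\cap\bar K$ is not finitely generated then, since $M/\ker\Psi\hookrightarrow A$ is finitely generated, $\ker\Psi$ is not finitely generated either, and the algorithm correctly reports that $H\cap K$ is infinitely generated. Otherwise $\SIP(G)$ returns a finite generating set for $M$, which is again a Droms group with computable defining graph and on which $\Psi$ is explicitly computable on generators.

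It remains to decide, for the finitely generated Droms group $M$ and the homomorphism $\Psi\colon M\to A$, whether $\ker\Psi$ is finitely generated. Composing with $A\to A/\mathrm{tor}(A)\cong\ZZ^r$ changes $\ker\Psi$ only by a finite index, so I may assume $\Psi\colon M\to\ZZ^r$; now $\ker\Psi$ is normal with free‑abelian quotient, and by the Bieri–Strebel criterion it is finitely generated precisely when the rational subsphere of characters vanishing on it lies in the BNS‑invariant $\Sigma^1(M)$. This is the step I expect to be the main obstacle, since it is exactly where non‑Howson behaviour enters: for $G$ free one has $M$ free with $\Sigma^1(M)=\emptyset$ when $\mathrm{rk}(M)\geqslant 2$, making the test elementary (as in Delgado–Ventura), whereas for a general Droms $M$ it is not. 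I would make it effective by computing $\Sigma^1(M)$ from the defining graph of $M$ — either via the explicit Meier–VanWyk description of $\Sigma^1$ for RAAGs, or, in keeping with the recursive philosophy of \Cref{cor: recursive Droms graphs}, by an induction on the Droms decomposition of $M$ (treating the central‑$\ZZ$ and free‑product cases separately) that simultaneously certifies finite generation and produces generators.

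Finally, to assemble the output, given generators $m_1,\dots,m_k$ of $\ker\Psi$, each lifts to some $t^{\mathbf c_j}m_j\in H\cap K$ with $\mathbf c_j$ obtained by solving the coset intersection $(\psi_H(m_j)+L_H)\cap(\psi_K(m_j)+L_K)$ in $\ZZ^m$; writing $t^{\mathbf d_1},\dots,t^{\mathbf d_\ell}$ for a generating set of $L_H\cap L_K$ (computed by integer linear algebra), one obtains $H\cap K=\gen{\,t^{\mathbf c_1}m_1,\dots,t^{\mathbf c_k}m_k,\ t^{\mathbf d_1},\dots,t^{\mathbf d_\ell}\,}$, a finite generating set for the intersection.
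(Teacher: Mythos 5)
Your proposal is correct in substance, and its bookkeeping coincides with the paper's: you record the projections $\bar H,\bar K$, the vertical subgroups $L_H,L_K$, and the completion homomorphisms, and your identity $\pi(H\cap K)=\ker\Psi$ is exactly the paper's description of $(H_1\cap H_2)\prio$ as a full preimage $M\abi_3^{-1}$ (your $\Psi$ factors through abelianization, which is the content of \Cref{completion}). You diverge in two places. First, where the paper proves that non-finite-generation of $H_1\prio\cap H_2\prio$ forces non-finite-generation of $(H_1\cap H_2)\prio$ by a graph-theoretic argument exploiting $\set{\Path[4],\Cycle[4]}$-freeness (\Cref{lem: infinitely gen -> infinitely gen}), you get the same implication in two lines from the extension $1\to\ker\Psi\to M\to \operatorname{im}\Psi\to 1$ with finitely generated abelian quotient --- a genuinely simpler proof of the implication actually needed. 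Second, and mainly: to decide finite generation of $\ker\Psi$ the paper stays elementary, taking the primary decomposition $\Grii_3=\Kgraph[n_4]\join\Grii_5$, observing that the projection of $\ker\Psi$ to the free product $\pcg{\Grii_5}$ is a nontrivial \emph{normal} subgroup (via commutators, \Cref{lem: intersection inclusions}(iii)), and invoking B.\,Baumslag's theorem so that finite generation becomes finite index, checkable as a rank condition on an explicit integer matrix; you instead invoke the Bieri--Neumann--Strebel criterion together with the Meier--VanWyk description of $\Sigma^1$ for RAAGs. Your test is sound, and in fact unwinds to the paper's: for a Droms graph $\Kgraph[n_4]\join\Grii_5$ with $\Grii_5$ disconnected, $\Sigma^1$ consists precisely of the characters not vanishing on the center, so the containment of your rational subsphere in $\Sigma^1$ is equivalent to $\rk(M\prii\ab)=n_5$, the last condition in~\eqref{eq: Droms intersection fg conditions}. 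What your route buys is uniformity and generality (no case analysis on $\Grii_3$, and it would work verbatim for any RAAG $M$ whose $\Sigma^1$ is known); what it costs is importing machinery the paper deliberately avoids.

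The one thin spot is the positive case: BNS/$\Sigma^1$ only \emph{decides} finite generation, and your recipe for actually producing generators of $\ker\Psi$ (``an induction \ldots\ that simultaneously certifies finite generation and produces generators'') is a gesture, not an argument --- the paper instead extracts generators concretely from a Schreier graph of the finite-index subgroup $(H_1\cap H_2)\prio\prii\leqslant\fin\pcg{\Grii_5}$ and lifts them by solving linear systems. Your gap is repairable without new ideas: elements of $\ker\Psi$ are effectively enumerable (since $\Psi$ is computable on words), and a candidate finite subset $S\subseteq\ker\Psi$ can be \emph{verified} to generate by checking that $\gen{S}$ is normal in $M$ (conjugates of the elements of $S$ by the generators of $M$ lie in $\gen{S}$; membership in a finitely generated subgroup of a Droms group is decidable by \Cref{thm: K-W-M}) and that the induced map $M/\gen{S}\to A$ is injective, i.e.\ that $M/\gen{S}$ is abelian and the resulting map of finitely generated abelian groups is injective by linear algebra; any $S$ passing this test satisfies $\gen{S}=\ker\Psi$, and once finite generation has been certified the enumeration terminates. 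With that patch (or with your fallback recursion carried out, which would essentially reproduce the paper's argument), your proof is complete.
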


\begin{thm}\label{thm: ESIP-center}
Let $G$ be a Droms group. If $G$ has solvable \ESIP, then $\ZZ^m\times G$ also has solvable~\ESIP. \qed
\end{thm}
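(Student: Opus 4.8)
The plan is to encode a finitely generated subgroup $H\leqslant\ZZ^m\times G$ by the computable triple $(\overline H,L_H,\sigma_H)$, where, writing $\pi_1\colon\ZZ^m\times G\to\ZZ^m$ and $\pi_2\colon\ZZ^m\times G\to G$ for the two projections, $\overline H=\pi_2(H)\leqslant G$ is finitely generated, $L_H=H\cap\ZZ^m\leqslant\ZZ^m$, and $\sigma_H\colon\overline H\to\ZZ^m/L_H$ is the well-defined homomorphism sending $\pi_2(h)$ to $\pi_1(h)+L_H$. One checks that $H=\{\,(c,u):u\in\overline H,\ c+L_H=\sigma_H(u)\,\}$, so the triple determines $H$; all three pieces are computable from a generating set using (constructive) membership in $G$ together with linear algebra over $\ZZ$. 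Given $H,K$, I would then record that $H\cap K\cap\ZZ^m=L_H\cap L_K$ (immediate and computable) and that $\pi_2(H\cap K)=\ker\psi$, where $\psi\colon\overline H\cap\overline K\to\ZZ^m/(L_H+L_K)$ is the homomorphism $u\mapsto\sigma_H(u)-\sigma_K(u)$; indeed $u\in\ker\psi$ precisely when the fibres $\{\,c:(c,u)\in H\,\}$ and $\{\,c:(c,u)\in K\,\}$ meet. This yields a short exact sequence $1\to L_H\cap L_K\to H\cap K\to\ker\psi\to 1$, whence $H\cap K$ is finitely generated if and only if $\ker\psi$ is.

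For the $\SIP$ part I would first run $\SIP(G)$ on $\overline H,\overline K$. If $Q:=\overline H\cap\overline K$ is not finitely generated, then, since $Q/\ker\psi$ embeds in the finitely generated abelian group $\ZZ^m/(L_H+L_K)$, the kernel $\ker\psi$ is not finitely generated either, so neither is $H\cap K$, and I output ``not finitely generated''. Otherwise $\SIP(G)$ returns generators $u_1,\dots,u_r$ of $Q$; expressing each $u_i$ as a word in the generators of $\overline H$ and of $\overline K$, then applying $\pi_1$ and abelianising, I compute the classes $\psi(u_i)\in\ZZ^m/(L_H+L_K)$, that is, the homomorphism $\psi$ explicitly.

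Everything now rests on the following sub-problem, which I expect to be the main obstacle: given a finitely generated subgroup $Q\leqslant G$ (in fact itself a Droms group, by Droms's theorem) and a homomorphism $\psi\colon Q\to\ZZ^d$ specified on a generating set, decide whether $\ker\psi$ is finitely generated and, if so, compute a generating set. Replacing $\ZZ^d$ by the image $\psi(Q)$ and splitting off the torsion of the target costs only a finite-index (hence, by Reidemeister--Schreier, finitely generated and computable) step, so the essential case is that of a surjection onto a free-abelian group. Here the naive reductions break down: $\ker\psi$ need not be a visible intersection of finitely generated subgroups, and for non-abelian $Q$ it is genuinely sometimes infinitely generated (already the kernel of the exponent-sum map $\Free[2]\to\ZZ$ is not finitely generated), so this is where the real work lies. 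My proposal is to settle the sub-problem by induction along the recursive description of Droms groups: for free-abelian $Q$ every subgroup is finitely generated; for a free product $Q=Q_1*Q_2$ one studies $\ker\psi$ through its Kurosh/Bass--Serre structure relative to the restrictions $\psi|_{Q_i}$; and for $Q=\ZZ\times Q'$ one separates the central coordinate, reducing finite generation of $\ker\psi$ to that of a character kernel in $Q'$. (Alternatively one could invoke the computability of the invariant $\Sigma^1$ for RAAGs, but the inductive route keeps the argument self-contained.)

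Finally, for the coset part of $\ESIP$ I would solve $wH\cap w'K$ by projecting to $G$: writing $w=(c,g)$ and $w'=(c',g')$, the coset intersection can be nonempty only if $g\overline H\cap g'\overline K\neq\emptyset$, which $\CIP(G)$ decides, returning a representative $x_0$ and hence the coset $x_0 Q$ of all $G$-solutions. Lifting to $\ZZ^m\times G$ then imposes a single affine condition of the shape $\psi(q)\equiv\text{const}\pmod{L_H+L_K}$ on $q\in Q$, and since $\psi$ and its image are already computed, this is decided (and solved) by linear algebra over $\ZZ$; back-substituting produces an explicit element of $wH\cap w'K$. The same lifting, applied to the generators of $\ker\psi$ furnished by the sub-problem and combined with a basis of $L_H\cap L_K$, yields a generating set for $H\cap K$. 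This completes the reduction of $\ESIP(\ZZ^m\times G)$ to $\ESIP(G)$ together with linear algebra over $\ZZ$.
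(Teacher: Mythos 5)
Most of your reduction is sound and runs parallel to the paper's actual proof, in places more cleanly: your triple $(\overline H, L_H,\sigma_H)$ is exactly the paper's apparatus of ``abelian completions'' ($\sigma_H$ is their coset-valued completion map, and your $\psi\colon \overline H\cap\overline K\to\ZZ^m/(L_H+L_K)$ is their matrix $\matr{R_1}-\matr{R_2}$ read modulo $L_1+L_2$, so that $\pi_2(H\cap K)=\ker\psi$ is their $M\abi_3^{-1}$). Your observation that $(\overline H\cap\overline K)/\ker\psi$ embeds in a finitely generated abelian group, whence $\overline H\cap\overline K$ not finitely generated forces $\ker\psi$ (and so $H\cap K$) not finitely generated, is actually a simplification: the paper proves the corresponding implication by a several-paragraph analysis of $\{\Path[4],\Cycle[4]\}$-free graphs. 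The coset part, as a single affine condition on the image of $\psi$, also matches the paper. The genuine gap is exactly where you flag ``the real work'': deciding whether $\ker\psi$ is finitely generated and computing generators. For this you offer only an inductive plan, and its free-product case --- ``one studies $\ker\psi$ through its Kurosh/Bass--Serre structure relative to the restrictions $\psi|_{Q_i}$'' --- is not an argument: the Kurosh decomposition of an infinitely generated kernel gives you no termination criterion, no decision procedure, and no finite generating set.

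The missing idea, which is the heart of the paper's proof, is this. Since the target of $\psi$ is abelian, $\ker\psi\supseteq[Q,Q]$. Writing the primary decomposition $Q\isom\ZZ^{n_4}\times Q_5$ with $Q_5$ a nontrivial free product (the complete case being trivial), the projection of $\ker\psi$ to $Q_5$ is a \emph{nontrivial normal} subgroup, and B.\,Baumslag's theorem (a finitely generated subgroup of a nontrivial free product containing a nontrivial normal subgroup has finite index) converts finite generation into finite index. Moreover, because that projection contains $[Q_5,Q_5]$, it is the full preimage of its image in the abelianization $\ZZ^{n_5}$, so finite index is equivalent to that image having rank $n_5$ --- a pure linear-algebra test; generators are then extracted from a finite Schreier coset graph and lifted back through the center and through $\pi_2$. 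Note that once you make the abelian-target observation, your induction collapses: in the free-product case $\ker\psi$ is finitely generated if and only if $\psi$ has finite image, and the central case is one application of the splitting lemma (every subgroup of $\ZZ^m\times\pcg{\Gri_0}$ splits over its intersection with the center --- a fact your $\ZZ\times Q'$ step tacitly uses to pass finite generation between $\ker\psi$ and its projection, and which itself needs the Droms hypothesis). Without Baumslag's theorem or an equivalent substitute, your procedure cannot decide the crucial alternative, so as written the proof is incomplete at its central step.
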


\begin{thm}\label{thm: ESIP-free}
If two finitely presented groups $\Gpfi$ and $\Gpfii$ have solvable \ESIP, then their free product $\Gpfi*\Gpfii$ also has solvable~\ESIP. \qed
\end{thm}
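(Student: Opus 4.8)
The plan is to analyze $G = \Gpfi * \Gpfii$ through its action on the associated Bass--Serre tree $T$, on which $G$ acts with \emph{trivial edge stabilizers} and with vertex stabilizers the conjugates of the two factors, and to reduce every instance of $\ESIP(G)$ to finitely many instances of $\SIP$ and $\CIP_{\fg}$ in $\Gpfi$ and $\Gpfii$ (both available by hypothesis). As a preliminary I would note that $\ESIP$ in the factors already yields, through the trivial-subgroup specializations of $\CIP_{\fg}$, solutions of their word and subgroup membership problems; combined with the free-product normal form this makes the word problem of $G$ solvable and, in the spirit of Stallings' foldings extended to graphs of groups, lets us compute from any finite generating set of a subgroup $H \leqslant G$ a finite folded graph of groups $Y_H = H \backslash T_H$ (the quotient of the minimal $H$-invariant subtree). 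Here $Y_H$ is finite, its edge groups are trivial, its vertex groups are finitely generated subgroups of $\Gpfi$ or $\Gpfii$ (the folds being driven by membership queries in the factors), and each edge carries a coset datum recording how it attaches to its endpoint vertex group. This is the free-product analogue of the Stallings automaton used in~\cite{delgado_algorithmic_2013}.

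To compute $H \cap K$ I would form the fiber product (pullback) of the cores $Y_H$ and $Y_K$, generalizing the pullback of Stallings automata. An edge of the pullback is a compatible pair of edges of $Y_H$ and $Y_K$ lying over the same $G$-orbit, two edges being compatible exactly when their attaching cosets $\alpha A_u$ and $\alpha' A_{u'}$ (inside a common conjugate of a factor) meet; deciding this and producing the resulting coset --- which, being a nonempty intersection of two cosets, is a single coset of $A_u \cap A_{u'}$ --- is precisely $\CIP_{\fg}$ in that factor, while the new vertex group $A_u \cap A_{u'}$ is computed by $\SIP$ there. The key finiteness observation is that, because edge stabilizers in $T$ are trivial, the $(H\cap K)$-orbits of edges inject into the (finite) set of compatible pairs of edges of $Y_H$ and $Y_K$; hence the pullback is a \emph{finite} graph of groups, and I would prove a free-product pullback lemma asserting that its based component is exactly $(H\cap K)\backslash T_{H\cap K}$. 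By the Kurosh subgroup theorem its fundamental group is the free product of a finitely generated free group with the vertex groups $A_u \cap A_{u'}$; therefore $H\cap K$ is finitely generated if and only if each of these finitely many vertex-group intersections is, which is decidable by $\SIP$ in the factors, and in the affirmative case a generating set is read off from the finite labelled graph.

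For the coset part of $\ESIP(G)$, deciding whether $wH \cap w'K = \emptyset$, I would reduce via the normal forms of $w$ and $w'$ to a based reachability question in the pullback of $Y_H$ with a suitably translated copy of $Y_K$: the two basepoints can be joined by a path through the pullback precisely when the corresponding chain of coset-intersection conditions in the factors is satisfiable, each such condition being an instance of $\CIP_{\fg}$; a coset representative is then recovered from the matching data along the path. Only the degenerate case in which $H\cap K$ (or one of the relevant translates) fixes a vertex of $T$, so that the question collapses to a single coset intersection inside one factor, needs to be separated out, and it is handled directly by $\CIP_{\fg}$ there.

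The step I expect to be the main obstacle is the coset bookkeeping in the fiber product rather than finiteness (which trivial edge stabilizers make automatic): proving the pullback lemma with no spurious or missing orbits, and --- since, unlike the free case, a folded edge attaches to a nontrivial vertex group along a whole coset --- verifying that propagating these attaching cosets through the pullback keeps the data consistent, so that finitely many instances of $\CIP_{\fg}$ and $\SIP$ in $\Gpfi$ and $\Gpfii$ genuinely settle both the intersection and the coset-intersection parts. This would establish $\ESIP(\Gpfi * \Gpfii)$.
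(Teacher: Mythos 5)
Your proposal is correct and is essentially the paper's own argument in a different dialect: your folded graphs of groups over the Bass--Serre tree are exactly Ivanov's reduced wedge automata (a correspondence the paper itself notes via Kapovich--Weidmann--Myasnikov), your fiber product is the junction automaton, and the decision structure coincides --- explore from the basepoint, compute each new vertex group by \SIP\ in the factor and abort declaring $H\cap K$ non-finitely generated if it is not finitely generated, decide edge compatibility via $\CIP_{\fg}$ applied to right cosets of that vertex group, read off generators by the Kurosh decomposition, and settle $wH\cap w'K$ by attaching normal-form threads and testing based reachability. The only cosmetic differences are your use of translated copies instead of the paper's hair attachments and your explicit separation of the degenerate one-factor case, which the paper absorbs into the case where the thread folds completely.
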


\begin{thm}\label{thm: TIP-free}
If two finitely presented groups $\Gpfi$ and $\Gpfii$ have solvable \TIP, then their free product $\Gpfi*\Gpfii$ also has solvable~\TIP. \qed
\end{thm}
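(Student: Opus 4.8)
The plan is to split $\TIP$ into $\SIP$ and the (unrestricted) $\CIP$, and to bootstrap from the free-product preservation of $\ESIP$ already in hand. Write $G = G_1 * G_2$. Since each $G_i$ has solvable $\TIP$, it has solvable $\ESIP$ (as $\CIP$ refines $\CIP_{\fg}$); hence \Cref{thm: ESIP-free} gives that $G$ has solvable $\ESIP$. In particular $\SIP(G)$ is solved, and the coset intersection $wH\cap w'K$ can be decided whenever $H\cap K$ is finitely generated. Moreover, taking trivial subgroups in $\CIP$ of each factor yields the word problem and the subgroup membership problem in $G_1$ and $G_2$, so normal forms in $G$ are computable and Stallings-type foldings are effective. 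Thus the whole theorem reduces to a single extra task: a \emph{uniform} algorithm for $\CIP(G)$ that does not presuppose finite generation of $H\cap K$ --- the case $H\cap K$ not finitely generated being exactly what lies beyond $\ESIP$.

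First I would reduce coset intersection to double-coset membership. For $H=\gen{u_1,\dots,u_n}$, $K=\gen{v_1,\dots,v_m}$ and words $w,w'$, one has $wH\cap w'K\neq\emptyset$ iff $g:=w^{-1}w'\in HK$, \ie iff $1$ and $g$ lie in the same $(H,K)$-double coset; any witness $g=hk$ then yields the representative $wh\in wH\cap w'K$ demanded by $\CIP$. To test $g\in HK$ I would build, by folding (effective since the factors have solvable membership problem), the finite Bass--Serre cores, or \emph{subgroup automata}, $\mathcal Y_H$ and $\mathcal Y_K$: finite graphs of groups with trivial edge groups realizing the Kurosh decompositions, whose vertex groups are the finitely generated conjugate-intersections $H\cap aG_ia^{-1}$ (\resp $K\cap bG_jb^{-1}$), with computable generating sets and conjugators. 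Reading a normal form from the basepoint traces a path in these finite objects, so $\mathcal Y_H,\mathcal Y_K$ recognize $H$ and $K$. The decisive point is that, although the component of the fibre product $\mathcal Y_H\times\mathcal Y_K$ carrying $HK$ has fundamental group $H\cap K$ --- and is therefore \emph{infinite} precisely when $H\cap K$ is not finitely generated --- testing one element $g$ explores only a portion of it of size $O(|g|)$: one reads $g$ into $\mathcal Y_H$ along a prefix and into $\mathcal Y_K$ along the complementary suffix, the two readings meeting at a single vertex lying in one factor $G_i$. Hence the search is finite regardless of finite generation of $H\cap K$.

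The main obstacle --- and the step where the full strength of $\CIP$ on the factors is genuinely used --- is this junction. At the meeting vertex the interacting $G_i$-syllable $c$ must split as $c=\alpha\beta$, with $\alpha$ an admissible $H$-side continuation (forcing $\alpha\in xU$ for a computable $x\in G_i$ and the vertex group $U\leqslant G_i$) and $\beta$ an admissible $K$-side continuation (forcing $\beta\in yV$). One must therefore decide whether $c\in xU\cdot yV$; setting $d=x^{-1}c$, this rearranges to the intersection of two left cosets $d\,U^{d}\cap yV$ in $G_i$, where $U^{d}=d^{-1}Ud$ is a finitely generated conjugate of $U$ --- exactly an instance of $\CIP(G_i)$. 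This is the ingredient beyond $\ESIP$: when $H\cap K$ is finitely generated the junction data are already visible on the finite core, but in general the meeting condition must be resolved by an honest coset-intersection computation in the factor, which is why $\TIP$, and not mere membership, is required of $G_1$ and $G_2$.

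Finally, there are only finitely many candidate junction configurations (bounded by $|\mathcal Y_H|\cdot|\mathcal Y_K|\cdot|g|$), each resolved by one $\CIP(G_i)$ query, so the procedure terminates; correctness follows from uniqueness of normal forms together with the double-coset interpretation. A successful splitting reconstructs explicit $h\in H$, $k\in K$ with $g=hk$, hence the representative $wh$; if no configuration succeeds then $wH\cap w'K=\emptyset$. This solves $\CIP(G)$ in all cases and, together with $\SIP(G)$, establishes $\TIP(G)$.
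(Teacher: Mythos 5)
Your frame is right up to the last step: splitting \TIP\ as \ESIP\ (via \Cref{thm: ESIP-free}) plus a uniform $\CIP(\Gpfi*\Gpfii)$, and the reduction of $wH\cap w'K\neq\varnothing$ to $g\coloneqq w^{-1}w'\in HK$, are both fine and consistent with the paper. The gap is the central claim that a witness $g=hk$ \emph{localizes} in the normal form of $g$ --- that one reads a prefix of $g$ in $\mathcal{Y}_H$, the complementary suffix in $\mathcal{Y}_K$, and resolves a single merged $G_i$-syllable $c=\alpha\beta$ by one $\CIP(G_i)$ query. This ignores that $h$ and $k$ may share an arbitrarily long mutually cancelling tail leaving no trace in $g$: writing $h=g_1\alpha w$ and $k=w^{-1}\beta g_2$, the residual condition is ``$\exists w$ with $g_1\alpha w\in H$ and $w^{-1}\beta g_2\in K$'', \ie $\alpha^{-1}g_1^{-1}H\cap\beta g_2K\neq\varnothing$ --- a coset intersection back in $\Gpfi*\Gpfii$, so the proposed reduction is circular, and a single factor-level query $c\in xU\cdot yV$ only certifies witnesses whose tail $w$ lies in the one vertex group at the meeting point. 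Concretely, take $\Gpfi=\gen{a}$, $\Gpfii=\gen{b}$, $H=\gen{(ab)^{n+1}}$, $K=\gen{(ba)^{n}b}$ and $g=a$: then $g=(ab)^{n+1}\cdot((ba)^{n}b)^{-1}\in HK$, every witness has length at least $2n$, all Kurosh vertex groups are trivial, and no splitting of the one-syllable word $a$ passes your junction test (no element of $K$ even ends in a $\Gpfi$-syllable), so your algorithm reports the intersection empty when it is not. Here $H\cap K$ is trivial, so the instance even falls under \ESIP\ --- the failure is in the uniform \CIP\ routine itself. Your closing count $|\mathcal{Y}_H|\cdot|\mathcal{Y}_K|\cdot|g|$ suggests you sensed the meeting point must range over all vertex pairs, but certifying a meeting at an arbitrary pair is exactly the product-connectivity computation your single-query scheme omits.

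Relatedly, your premise that the relevant component of the fibre product is an \emph{infinite} object exactly when $H\cap K$ is not finitely generated misdiagnoses where infiniteness lives in this formalism, and this is what the paper's proof exploits. The underlying graph of the junction $\Ati_{\!Hu}\junct\Ati_{\!Kv}$ is \emph{always} finite (its primary vertices sit inside $\Vertexs_{0}\,\Ati_{Hu}\times\Vertexs_{0}\,\Ati_{Kv}$); only the secondary-vertex labels --- intersections of conjugates of vertex groups inside $G_\nu$ --- can fail to be finitely generated (\Cref{cor: fg int <-> fg labels}). So the paper constructs the \emph{whole} finite underlying graph, deciding each adjacency via the coset-intersection condition~\eqref{punt-important} using $\CIP(G_\nu)$ --- which, unlike $\CIP_{\fg}$, works even when the label $\labl_\vertii^{\,\labl_\edgi}\cap\labl_{\vertii'}^{\,\labl_{\edgi'}}$ is not finitely generated --- and installs the trivial subgroup as a placeholder label wherever \SIP($G_\nu$) detects non-finite generation. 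This suffices to decide whether $(\term u,\term v)$ is connected to the basepoint, which by \Cref{lem: int cosets} settles $Hu\cap Kv\neq\varnothing$, and a representative is read off from basic labels along a connecting walk. Replacing your local $O(|g|)$ search by this global (but finite) construction absorbs the cancelling tail $w$ into a path in the finite graph and removes the circularity; as written, your proof does not establish the theorem.
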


To prove~\Cref{thm: SIP-center,thm: ESIP-center} we extend the techniques in~\cite{delgado_algorithmic_2013} from free groups to RAAGs; and to prove~\Cref{thm: ESIP-free,thm: TIP-free} we use Ivanov's techniques to understand and work with subgroups of free products (see~\cite{ivanov_intersection_1999,ivanov_intersecting_2001}). Both are relatively long and technical arguments, each requiring several pages of preliminary considerations. For the sake of clarity, we decided to include them instead of writing a shorter preprint but harder to read.

Our main result (\Cref{thm: SIP Droms}) can be seen as a partial generalization of
\Cref{thm: K-W-M}(i)
by Kapovich--Weidmann--Myasnikov
in the sense that we prove a stronger thesis than them (namely, \ESIP\ instead of \SMP, see~\Cref{fig: dependencies intersection problems}) for a smaller class of groups (Droms instead of coherent PC-groups). In this situation, it is interesting to ask the following questions.

\begin{qst}
Does the group $\pcg{\Path[4]}$ have solvable \SIP? Is it true that a RAAG have solvable \SIP\ if and only if it is Droms?
\end{qst}

Particularly suggestive for us is the result from Aalbersberg--Hoogeboom~\cite{aalbersberg_characterizations_1989} stating that the intersection problem for a partially commutative monoid is solvable if and only if its commutation graph is Droms. The situation is intriguingly similar to that for the~\SMP.

\bigskip

The paper is organized as follows. In Section~\ref{sec: Preliminars} we establish the necessary background and references about Droms groups and algorithmic issues, and we prove the main result, Theorem~\ref{thm: SIP Droms}, modulo Theorems~\ref{thm: SIP-center}, \ref{thm: ESIP-center}, \ref{thm: TIP-free}, and \ref{thm: ESIP-free}. Then, in Section~\ref{sec: Droms direct product case}, we study the direct product case, proving Theorems~\ref{thm: SIP-center} and \ref{thm: ESIP-center}; and finally, in Section~\ref{sec: Droms free product case}, we consider the free product situation proving Theorems~\ref{thm: TIP-free} and~\ref{thm: ESIP-free}. See~\cite[Part III]{delgado_rodriguez_extensions_2017} for a more detailed version of these results.

\section{Preliminaries}\label{sec: Preliminars}

Below we present the necessary preliminaries on algorithmicity and PC-groups.

\subsection{Algorithmic aspects}

Similar preserving properties
concerning free and direct products
were studied for the Membership Problem (\SMP) by \citeauthor{mikhailova_occurrence_1968}. In~\cite{mikhailova_occurrence_1968} she proved that \SMP\ is preserved under free products; whereas in~\cite{mikhailova_occurrence_1958}, she showed that $\Free[2]\times \Free[2]$ has unsolvable membership problem, proving that \SMP\ (and thus \SIP\ and~\CIP) \emph{do not} pass to direct products.

Several obvious relations among the already introduced algorithmic problems are summarized in the diagram below:

\ffigure{
\begin{tikzpicture}[node distance=0.8 and 1.35, on grid,auto]
\node[] (tip) {\TIP};
\node[] (esip) [above right = of tip] {\ESIP};
\node[] (sip) [right = of esip] {\SIP};
\node[] (mp) [below right = of sip] {\SMP};
\node[] (wp) [right = of mp] {\WP};
\node[] (cip) [below right = of tip] {\CIP};
\node[] (cipf) [right = of cip] {$\CIP_{\fg}$};
\path[] (tip) edge[-Implies, double,double distance = 0.6 mm] (esip);
\path[] (esip) edge[-Implies, double,double distance = 0.6 mm] (sip);
\path[] (sip) edge[-Implies, double,double distance = 0.6 mm]
        node[above right] {$(*)$}
        (mp);
\path[] (mp) edge[-Implies, double,double distance = 0.6 mm] (wp);
\path[] (tip) edge[-Implies, double,double distance = 0.6 mm] (cip);
\path[] (cip) edge[-Implies, double,double distance = 0.6 mm] (cipf);
\path[] (cipf) edge[-Implies, double,double distance = 0.6 mm] (mp);
\path[] (esip) edge[-Implies, double,double distance = 0.6 mm] (cipf);
\end{tikzpicture}
}
{Some dependencies between algorithmic problems} {fig: dependencies intersection problems}
where the starred implication is true with the extra assumption that the involved group is torsion-free, and has solvable word problem. We recall that \WP\ and \SMP\ stand for the classical \emph{word problem}, and the \emph{subgroup membership problem} stated below.
\medskip
\begin{named}[(Subgroup) membership problem, $\SMP(G)$] Given a finite set of words $w, u_1, \ldots, u_n$ in the generators of $G$, decide whether $w$ represents an element in the subgroup generated by $u_1, \ldots, u_n$; and in the affirmative case compute an expression of $w$ as a word in the~$u_i$'s.
\end{named}

\begin{lem} \label{lem: TF + SIP + WP -> MP}
If a torsion-free group satisfies \SIP\ and \WP, then it also satisfies \SMP.
\end{lem}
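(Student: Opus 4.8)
The plan is to reduce the membership query to a transparent computation inside an infinite cyclic subgroup. Fix an input $w, u_1, \ldots, u_n$ and set $H = \gen{u_1, \ldots, u_n}$ and $K = \gen{w}$. First I would run \WP\ on $w$: if $w =_G \trivial$ then certainly $w \in H$ and we output the trivial expression, so assume $w \neq_G \trivial$. Torsion-freeness now guarantees that $K$ is infinite cyclic, so the assignment $w^a \mapsto a$ gives an isomorphism $K \cong \ZZ$. The key remark is that $w \in H$ if and only if $w \in H \cap K$, since $w$ always lies in $K$.

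Next I would invoke the \SIP-algorithm on the pair $H, K$. Because $H \cap K$ is a subgroup of $K \cong \ZZ$, it is automatically finitely generated (every subgroup of $\ZZ$ is cyclic); hence \SIP\ answers affirmatively and returns a finite generating set $g_1, \ldots, g_k$ of $H \cap K$, as words in the generators of $G$. Each $g_i$ lies in $K$, so $g_i =_G w^{a_i}$ for a unique $a_i \in \ZZ$, which I would recover by testing $g_i w^{-a} =_G \trivial$ for $a = 0, 1, -1, 2, -2, \ldots$ using \WP\ (this halts, since such an $a$ exists). Transporting through the isomorphism $K \cong \ZZ$, the condition $w \in H \cap K$ becomes $1 \in \gen{a_1, \ldots, a_k} = \gcd(a_1, \ldots, a_k)\,\ZZ$, that is, $\gcd(a_1, \ldots, a_k) = 1$, which is decidable. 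In the affirmative case the required expression of $w$ in the $u_i$ is obtained by the usual enumeration of all words over $u_1^{\pm 1}, \ldots, u_n^{\pm 1}$, testing each against $w$ with \WP; since $w \in H$, this search terminates.

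The one place that genuinely needs care is the role of torsion-freeness. It is exactly what makes $K$ infinite cyclic, so that the exponents $a_i$ are unambiguous and membership reads off as a plain gcd over $\ZZ$ with no hidden modulus. For $w$ of finite order $r$ the analogous computation would live in $\ZZ/r\ZZ$ and would require the value of $r$, which \WP\ alone cannot provide. Everything else is routine: the finite generation of $H \cap K$ comes for free from its being a subgroup of $\ZZ$, and both searches terminate courtesy of \WP.
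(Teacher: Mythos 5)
Your proof is correct and follows essentially the same route as the paper's: intersect $\gen{w}$ with $H$ via \SIP\ (which must answer affirmatively since the intersection lies in $\gen{w}\cong\ZZ$), recover the exponents of the returned generators by enumeration, reduce membership to a gcd condition, and use torsion-freeness (plus a \WP\ check for $w=\trivial$) to make the exponents unambiguous. The only cosmetic differences are that you test $w=\trivial$ at the start rather than the end and you explicitly add the terminating word-enumeration to produce the expression of $w$ in the $u_i$'s, a routine step the paper leaves implicit.
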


\begin{proof}
Let $G=\pres{\Geni}{\Reli}$. Given words $u,v_1,\ldots, v_m$ in $\Geni$, apply \SIP\ to $H=\gen{u}$, and $K=\gen{v_1,\ldots, v_m}$: since $H\cap K$ is cyclic (and so, finitely generated), \SIP\ will always answer \yep, and return a finite set of words $w_1,\ldots,w_{p}$ in $\Geni$ such that $H\cap K=\gen{w_1,\ldots,w_p}=\gen{u^r}$, for some unknown $r\in \ZZ$.

Now, since each $w_i$ must be a power of $u^r$ (say $w_i =u^{r_i}$), we can compute the exponents $r_1,\ldots,r_p \in \ZZ$ by brute force enumeration (even without using WP).
Once we have obtained the integers $r_1,\ldots,r_p \in \ZZ$, we can effectively compute the greatest common divisor $r=\gcd(r_1,\ldots,r_p)$ and get $H\cap K=\gen{w_1,\ldots,w_p} =\gen{u^r}$.

Now, it is clear that $u\in K$ if and only if $u\in H\cap K=\gen{u^r}$; \ie if and only if $u=u^{rs}$, for some $s\in \ZZ$.
To decide whether such an $s$ exists, first apply \WP\ to the input word $u$ in order to decide whether $u=1$ or not. In the affirmative case the answer is obviously \yep; otherwise, $u\neq 1$ and torsion-freeness of $G$ tells us that $u\in K$ (and the answer is \yep) if and only if $r=\pm 1$.
\end{proof}


\begin{rem} \label{rem: CIPfg -> MP}
Note that ${\CIP_{\fg} \Imp \SMP}$ without any further condition, since $\gpi \in \gen{h_1,\ldots,h_k} $ if and only if $ \gpi \cdot \Trivial \cap \trivial \cdot \gen{h_1,\ldots,h_k} \neq \varnothing$.
\end{rem}

\begin{cor} \label{cor: SIP,CIP -> MP (PC-groups)}
For PC-groups, both \SIP\ and \CIP\ imply \SMP. In particular, \SIP\ and \CIP\ are unsolvable for $\Free[2] \times \Free[2] = \Pcg{\Cycle[4]}$, and hence for any PC-group $\Pcg{\Gri}$ with $\Cycle[4]\leqslant \Gri$. \qed
\end{cor}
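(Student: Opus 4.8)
The plan is to assemble the two reductions just proved (\Cref{lem: TF + SIP + WP -> MP} and \Cref{rem: CIPfg -> MP}) with Mihailova's theorem and the standard fact that a full subgraph of the commutation graph induces an embedding of the corresponding PC-groups. First I would establish the two implications separately. For $\SIP\Imp\SMP$ I would invoke \Cref{lem: TF + SIP + WP -> MP}: every PC-group is torsion-free and has solvable word problem (both classical properties of RAAGs, the latter via the normal form for partially commutative words), so solvability of $\SIP$ forces solvability of $\SMP$. For $\CIP\Imp\SMP$ I would read off the implication $\CIP\Imp\CIP_{\fg}$ from \Cref{fig: dependencies intersection problems} and then apply \Cref{rem: CIPfg -> MP}; note that this second implication requires no hypothesis on the group whatsoever.

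With these in hand, I would specialize to $\Free[2]\times\Free[2]=\Pcg{\Cycle[4]}$. Mihailova~\cite{mikhailova_occurrence_1958} proved that this group has unsolvable $\SMP$; hence, by the contrapositives of the two implications above, it has unsolvable $\SIP$ and unsolvable $\CIP$.

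It remains to propagate this to any $\Pcg{\Gri}$ with $\Cycle[4]\leqslant\Gri$. Since $\Cycle[4]$ is a full subgraph of $\Gri$, sending each of its four vertices to the corresponding generator of $\Pcg{\Gri}$ embeds $\Pcg{\Cycle[4]}$ as the subgroup generated by a subset of the graphical generators of $\Pcg{\Gri}$. Because membership of an element in a subgroup is absolute --- it does not depend on the ambient overgroup --- any algorithm solving $\SMP$ for $\Pcg{\Gri}$ would, on the inputs coming from $\Pcg{\Cycle[4]}$, also solve $\SMP$ for $\Pcg{\Cycle[4]}$, which is impossible. Thus $\Pcg{\Gri}$ has unsolvable $\SMP$, and applying the first paragraph once more (to $\Pcg{\Gri}$, again torsion-free with solvable $\WP$) we conclude that $\Pcg{\Gri}$ has unsolvable $\SIP$ and $\CIP$.

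Since the whole argument is a recombination of results already in hand, I do not anticipate a genuine obstacle; the only points needing a word of justification are the two classical properties of RAAGs (torsion-freeness and a solvable word problem) and the observation that a full subgraph yields a generator-subset embedding, after which the absoluteness of subgroup membership does the rest.
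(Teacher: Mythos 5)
Your proposal is correct and follows essentially the same route the paper intends for this corollary (stated with \qed as immediate): the implication $\SIP \Imp \SMP$ via Lemma~\ref{lem: TF + SIP + WP -> MP} together with torsion-freeness and solvable \WP\ of PC-groups, the implication $\CIP \Imp \CIP_{\fg} \Imp \SMP$ via Remark~\ref{rem: CIPfg -> MP}, Mihailova's unsolvability of \SMP\ for $\Free[2]\times\Free[2]$, and propagation to any $\Pcg{\Gri}$ with $\Cycle[4]\leqslant\Gri$ through the full-subgraph embedding (Theorem~\ref{thm: PC-group properties}\ref{lem:induced PC-group}) and the absoluteness of subgroup membership. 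No gaps to report.
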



\begin{rem}
Note that the difference between properties \TIP\ and \ESIP\ is that the second one says nothing about $wH\cap w'K$ in the case when $H\cap K$ is not finitely generated, while \TIP\ is required to answer about emptiness even in this case; this is a subtlety that will become important along the paper.


Finally, note that $(Hw)^{-1}=w^{-1}H$ and $w_1Hw_2 =w_1w_2(w_2^{-1}Hw_2) =w_1w_2H^{w_2}$. Therefore, the variants of $\CIP$ for right, left, and two-sided cosets are equivalent problems.
\end{rem}

\subsection{PC-groups}

Below, we recall some well-known results about PC-groups we will need throughout the paper; we refer the reader to~\cite{charney_introduction_2007,esyp_divisibility_2005,green_graph_1990,koberda_right_2013} for detailed surveys, and further reference.

\begin{thm}\label{thm: PC-group properties}

\begin{enumerate}[ind]
\item\label{thm: Droms isomorphic PC-groups} Let $\Gri_1, \Gri_2$ be simple graphs. Then, the groups~$\pcg{\Gri_1}$, $\pcg{\Gri_2}$ are isomorphic if and only if the graphs $\Gri_1, \Gri_2$ are isomorphic (\cite{droms_isomorphisms_1987}). In particular, the isomorphism problem is solvable within RAAGs.
\item The abelianization of the PC-group $\pcg{\Gri}$ is the free-abelian group of rank $|V\Gri|$.
\item The word and conjugacy problems are solvable for RAAG's (\cite{wrathall_word_1988,wrathall_free_1989,liu_efficient_1990,hermiller_algorithms_1995,van_wyk_graph_1994}).
\item\label{prop: PC-groups are torsion free} PC-groups are torsion-free (\cite{baudisch_subgroups_1981}).
\item\label{prop: RAAGs are Hopfian} RAAGs are residually finite; in particular, they are Hopfian (\cite{koberda_right_2013}).
\item\label{lem: ab rk = max clique} The maximum rank of a free-abelian subgroup of a RAAG $\Pcg{\Gri}$ is the size of a largest clique in~$\Gri$ (\cite{koberda_right_2013}).
\item Disjoint union and graph join correspond, respectively, to free product and direct product of groups: $\pcg{X\sqcup Y}\simeq \pcg{X}*\pcg{Y}$ and $\pcg{X\join Y}\simeq \pcg{X}\times \pcg{Y}$.
\item A PC-group $\pcg{\Gri}$ splits as a nontrivial free product if and only if its defining graph $\Gri$ is disconnected.
\item A PC-group $\pcg{\Gri}$ splits as a nontrivial direct product if and only if its defining graph $\Gri$ is a join.
\item The center of a PC-group $\pcg{\Gri}$ is the (free-abelian) subgroup generated by the set of \central\ vertices in $\Gri$.
\item \label{lem:induced PC-group} Let $\Gri$ be an arbitrary simple graph, and $Y$ a subset of vertices of $\Gri$. Then, the subgroup of $\Pcg{\Gri}$ generated by $Y$ is again a PC-group, presented by the corresponding full subgraph, $\Gen{Y}\simeq \pcg{\Gri[Y]}$. \qed
\end{enumerate}
\end{thm}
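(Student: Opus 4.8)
The plan is to read Theorem~\ref{thm: PC-group properties} as a compendium of mostly standard facts and to calibrate the effort accordingly. The genuinely substantial items --- the isomorphism rigidity \ref{thm: Droms isomorphic PC-groups}, torsion-freeness \ref{prop: PC-groups are torsion free}, residual finiteness and Hopficity \ref{prop: RAAGs are Hopfian}, the clique/abelian-rank equality \ref{lem: ab rk = max clique}, and the solvability of the word and conjugacy problems --- are deep results for which I would simply invoke the cited references. The abelianization statement is immediate from the presentation~\eqref{eq: pres PC-group}, and the items describing the free/direct product splittings, the disjoint-union/join correspondence, and the center all follow by directly comparing defining presentations. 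The only item that warrants a short self-contained argument, and the one I would actually write out, is the induced-subgroup statement~\ref{lem:induced PC-group}.

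To prove~\ref{lem:induced PC-group} I would argue via a \emph{retraction}. Write $\Gri = (\Geni, \Edgi)$ and fix $Y \subseteq \Geni$. There is an obvious homomorphism $\phi \colon \pcg{\Gri[Y]} \to \pcg{\Gri}$ sending each vertex of $Y$ to itself: it is well defined because every defining relation $[y_i, y_j] = \trivial$ of $\pcg{\Gri[Y]}$ comes from an edge $\set{y_i, y_j} \in \Edgi$ and hence already holds in $\pcg{\Gri}$; and by construction its image is exactly $\Gen{Y}$. So $\phi$ is a surjection onto $\Gen{Y}$, and the whole point reduces to showing that $\phi$ is injective.

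For injectivity I would build a map in the opposite direction, $\rho \colon \pcg{\Gri} \to \pcg{\Gri[Y]}$, defined on vertices by $v \mapsto v$ if $v \in Y$ and $v \mapsto \trivial$ otherwise. The crucial point --- really the only thing to verify --- is that $\rho$ respects the defining relations of $\pcg{\Gri}$: given an edge $\set{v_i, v_j} \in \Edgi$, either at least one of $v_i, v_j$ lies outside $Y$, in which case $\rho$ maps $[v_i, v_j]$ to a commutator with a trivial entry, namely $\trivial$; or both lie in $Y$, in which case $\set{v_i, v_j}$ is an edge of the \emph{full} subgraph $\Gri[Y]$, so that $[v_i, v_j] = \trivial$ already holds in $\pcg{\Gri[Y]}$. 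This is precisely where fullness of $\Gri[Y]$ is used: if $\Gri[Y]$ were allowed to drop an edge of $\Gri$ between two vertices of $Y$, the map $\rho$ would fail to be well defined. I expect this well-definedness check to be the only genuine (and minor) obstacle; everything else is formal.

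Finally, the composite $\rho \circ \phi \colon \pcg{\Gri[Y]} \to \pcg{\Gri[Y]}$ fixes every generator $y \in Y$, hence equals the identity; therefore $\phi$ is injective. Combined with the surjectivity onto $\Gen{Y}$ observed above, this yields $\Gen{Y} \simeq \pcg{\Gri[Y]}$, as claimed.
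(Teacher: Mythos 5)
Your proposal is correct and takes essentially the same approach as the paper: Theorem~\ref{thm: PC-group properties} is stated there as a compendium of known results, justified only by the citations and closed with \qed, which is exactly your treatment of items (i)--(x). Your written-out retraction argument for item~\ref{lem:induced PC-group} is the standard proof and is complete --- in particular, you correctly isolate the well-definedness of $\rho$ on edges with both endpoints in $Y$ as the one place where fullness of $\Gri[Y]$ is needed, and $\rho \circ \phi = \mathrm{id}$ does yield injectivity of $\phi$, hence $\Gen{Y} \simeq \pcg{\Gri[Y]}$.
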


Besides Droms groups, other subfamilies of PC-groups naturally arise as directly related with the intersection problem. For example, in~\cite{delgado_characterizations_2014}, Delgado characterized the PC-groups satisfying the Howson property precisely as those being fully residually free (or free products of free-abelian groups).


Another interesting subfamily of PC-groups is that of chordal groups, that is the PC-groups presented by a finite chordal graph (\ie one with no induced cycles of length strictly greater than three).


Clearly,
Droms graphs are chordal and not the other way around. From Theorem~\ref{thm: droms subgroups}, it is clear that Droms groups are \defin{coherent} (every finitely generated subgroup is finitely presented). However, this last class was proved to be bigger, corresponding precisely to chordal groups, which turn out to have some nice algorithmic properties as well.

\begin{thm}[Droms, \cite{droms_graph_1987}]\label{thm: coherent <-> chordal}
Let $\Gri$ be a finite graph. Then, the RAAG $\pcg{\Gri}$ is coherent if and only if $\Gri$ is chordal. \qed
\end{thm}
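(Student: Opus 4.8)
The plan is to prove both implications, treating the incoherence direction via induced subgraphs and the coherence direction by induction on the structure of chordal graphs. Throughout I use that \emph{coherence is subgroup-closed}: if $G$ is coherent and $H\leqslant G$, then any finitely generated subgroup of $H$ is a finitely generated subgroup of $G$, hence finitely presented, so $H$ is coherent.

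First the easy implication, \emph{not chordal $\Imp$ not coherent}. If $\Gri$ is not chordal it contains an induced cycle $\Cycle[n]$ with $n\geqslant 4$, so by the induced-subgroup result (\Cref{thm: PC-group properties}) the full subgraph $\Cycle[n]$ yields $\pcg{\Cycle[n]}\leqslant\pcg{\Gri}$. By subgroup-closedness it therefore suffices to show that each $\pcg{\Cycle[n]}$ with $n\geqslant 4$ is incoherent. For $n=4$ this is the classical Stallings--Bieri phenomenon: $\pcg{\Cycle[4]}=\Free[2]\times\Free[2]$ contains a finitely generated, non-finitely-presented subgroup. For general $n\geqslant 4$ I would argue uniformly: since $\Cycle[n]$ has no triangles, its clique (flag) complex is the topological cycle itself, which is connected but not simply connected. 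By the Bestvina--Brady theorem the kernel of the homomorphism $\pcg{\Cycle[n]}\to\ZZ$ sending every standard generator to $1$ is then finitely generated but not finitely presented, witnessing incoherence. Note that one cannot apply Bestvina--Brady to $\Gri$ directly, since attached triangles elsewhere may make the full clique complex simply connected; passing to the induced subgroup is essential.

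Now the substantial implication, \emph{chordal $\Imp$ coherent}. A finite graph is chordal if and only if it admits a perfect elimination ordering, equivalently it is built from a single vertex by repeatedly adjoining a \emph{simplicial} vertex (one whose neighbourhood is a clique). I would induct on $N=|V\Gri|$. The base case $N=1$ gives $\pcg{\Gri}=\ZZ$, which is coherent. For the inductive step, pick a simplicial vertex $v$ with $K:=N(v)$ a clique, and set $\Gri'=\Gri\smallsetminus v$, which is chordal with $N-1$ vertices, so $\pcg{\Gri'}$ is coherent by induction. The only relations involving $v$ are the commutations $[v,k]=1$ for $k\in K$, the vertex set $K\cup\{v\}$ spans a clique, and $K$ spans a clique, so we obtain the amalgam decomposition $\pcg{\Gri}\cong\pcg{\Gri'}\ast_{\ZZ^{|K|}}\ZZ^{|K|+1}$, where the edge group $\ZZ^{|K|}=\pcg{K}$ is a \emph{direct factor} of the free-abelian vertex group $\ZZ^{|K|+1}=\ZZ^{|K|}\times\ZZ$. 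Crucially, $\pcg{K}$ is moreover a \emph{retract} of $\pcg{\Gri'}$, via the graphical retraction killing all vertices outside $K$.

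It remains to show that coherence is preserved by the operation $G\ast_{\ZZ^{k}}(\ZZ^{k}\times\ZZ)$ when $G$ is coherent and the edge group is a retract of $G$; this is the heart of the matter. I would take a finitely generated subgroup $H$ of the amalgam and analyze it through its action on the Bass--Serre tree, producing a graph-of-groups splitting of $H$ whose vertex groups are intersections of $H$ with conjugates of $G$ and of $\ZZ^{k}\times\ZZ$, and whose edge groups are intersections of $H$ with conjugates of $\ZZ^{k}$. The abelian vertex and edge groups are finitely generated free-abelian, hence finitely presented; the intersections with conjugates of $G$ are finitely presented once shown finitely generated, by coherence of $G$; and a finite graph of finitely presented groups has finitely presented fundamental group. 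The main obstacle is exactly the control of these vertex groups: guaranteeing that $H$'s induced Bass--Serre quotient is finite and that the factor-vertex groups $H\cap{}^{g}G$ are finitely generated. This is precisely where $\Free[2]\times\Free[2]=\pcg{\Cycle[4]}$ fails and where the hypotheses of the chordal construction must be used: the edge group being a direct factor of the abelian vertex group, together with its being a retract of $G$, is what rules out the infinitely-generated intersections responsible for incoherence, and I expect that verifying this finiteness rigorously (rather than the formal graph-of-groups bookkeeping around it) is the genuinely delicate step.
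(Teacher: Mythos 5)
The paper itself gives no proof of this theorem: it is stated with a citation to Droms and a closing tombstone, so your proposal can only be measured against Droms's original argument, whose skeleton you have in fact reproduced (induced long cycles for incoherence; a simplicial-vertex amalgam decomposition for coherence). Your backward direction is complete and correct: coherence does pass to subgroups, an induced cycle $\Cycle[n]$ with $n\geqslant 4$ gives $\pcg{\Cycle[n]}\leqslant \pcg{\Gri}$ by \Cref{thm: PC-group properties}\ref{lem:induced PC-group}, and since the flag complex of $\Cycle[n]$ is a topological circle (connected, not simply connected), Bestvina--Brady yields a finitely generated, non-finitely-presented kernel; your caveat that one must apply Bestvina--Brady to the induced subgroup rather than to $\Gri$ itself is exactly right. (Droms, writing a decade before Bestvina--Brady, proved incoherence of $\pcg{\Cycle[n]}$ by hand, but your modern shortcut is legitimate.)

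The forward direction, however, is left open precisely at its crux, and the mechanism you propose for closing it is the wrong one. The lemma you need --- and never prove --- is that a finite graph of coherent groups with \emph{Noetherian} edge groups (every subgroup finitely generated, as holds for $\ZZ^{k}$) has coherent fundamental group; this is essentially what Droms establishes, and your retract/direct-factor observations play no role in it. The proof needs only two standard Bass--Serre facts. First, a minimal action of a finitely generated group on a simplicial tree is cocompact; so a finitely generated $H\leqslant \pcg{\Gri'}\ast_{\ZZ^{k}}\ZZ^{k+1}$ either fixes a vertex (and is then finitely presented by coherence of the corresponding factor) or inherits a \emph{finite} graph-of-groups decomposition whose edge groups $H\cap {}^{g}\ZZ^{k}$ are finitely generated because $\ZZ^{k}$ is Noetherian --- no retract hypothesis enters. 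Second, if a finitely generated group is the fundamental group of a finite graph of groups with finitely generated edge groups, then its vertex groups are finitely generated (a normal-form exercise: for $A\ast_{C}B$ generated by finitely many elements, the subgroups generated by $C$ together with the occurring syllables form a sub-amalgam equal to the whole group, forcing them to be all of $A$ and $B$). These two facts give finite generation of the vertex groups $H\cap{}^{g}\pcg{\Gri'}$ unconditionally, hence finite presentability by the inductive coherence of $\pcg{\Gri'}$, and $H$ is finitely presented as a finite graph of finitely presented groups over finitely generated edge groups. Your suggestion that the retract and direct-factor structure is ``what rules out'' the $\Free[2]\times\Free[2]$ pathology misreads the situation: the unconditional lemma is true, and $\Free[2]\times\Free[2]=\pcg{\Cycle[4]}$ is no counterexample simply because it admits no splitting with Noetherian edge group --- equivalently, $\Cycle[4]$ has no simplicial vertex, so the induction never starts there. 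In short: right skeleton, genuinely missing heart, and the repair you sketch would send you down the wrong path.
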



\begin{thm}[Kapovich--Weidmann--Myasnikov, \cite{kapovich_foldings_2005}]\label{thm: K-W-M}
Let $\Gri$ be a finite chordal graph (\ie $\pcg{\Gri}$ is a coherent RAAG). Then,
\begin{enumerate*}[ind]
\item[(i)] $\pcg{\Gri}$ has solvable membership problem;
\item[(ii)] given a finite subset $\Sseti \subseteq \pcg{\Gri}$, we can algorithmically find a presentation for the subgroup~$\gen{\Sseti} \leqslant \pcg{\Gri}$. \qed
\end{enumerate*}
\end{thm}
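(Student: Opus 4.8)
The plan is to exploit the recursive structure of chordal graphs to realise $\pcg{\Gri}$ as the fundamental group of a finite graph of groups with free-abelian vertex and edge groups, and then to run a Stallings-type folding process (generalised to groups acting on trees, as in \cite{kapovich_foldings_2005}) on an arbitrary finitely generated subgroup $\gen{\Sseti}$. Because free-abelian groups are algorithmically very tractable, all the local moves demanded by the folding become effective; and the folded object produced at the end is itself an induced splitting of $\gen{\Sseti}$, from which one can read off both a presentation (part (ii)) and a membership test (part (i)).

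First I would establish the structural decomposition, by induction on $|V\Gri|$. If $\Gri$ is disconnected, then by \Cref{thm: PC-group properties} the group $\pcg{\Gri}$ is the free product of the PC-groups on the connected components, and I recurse on each. If $\Gri$ is connected but not complete, chordality guarantees a clique separator, \ie a clique $C$ whose deletion disconnects $\Gri$ into pieces $A_1,\ldots,A_k$; correspondingly $\pcg{\Gri}$ splits as the iterated amalgam $\pcg{\Gri[A_1\cup C]} *_{\pcg{\Gri[C]}} \cdots *_{\pcg{\Gri[C]}} \pcg{\Gri[A_k\cup C]}$ over the free-abelian group $\pcg{\Gri[C]}\simeq \ZZ^{|C|}$. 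Iterating (equivalently, passing to the clique tree of $\Gri$) exhibits $\pcg{\Gri}$ as the fundamental group of a finite tree of groups whose vertex groups are the free-abelian groups on the maximal cliques and whose edge groups are the free-abelian groups on the separating cliques. The base of the recursion, $\Gri$ complete, gives $\pcg{\Gri}\simeq \ZZ^{|V\Gri|}$, where membership, intersections of subgroups and of cosets, and presentations of subgroups are all computable by elementary linear algebra over $\ZZ$.

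Next I would set up and run the folding. Starting from $\Sseti$, I build a finite labelled graph over the graph of groups above (an $\mathcal{A}$-graph in the terminology of \cite{kapovich_foldings_2005}) whose fundamental group surjects onto $\gen{\Sseti}$, and I repeatedly apply the elementary folding moves. Each such move reduces to deciding membership in, and computing intersections of, the free-abelian vertex and edge groups and their conjugates --- exactly the operations provided by the base case --- so every move is effective. Once the process stabilises, the folded graph of groups realises the induced splitting of $\gen{\Sseti}$: its vertex and edge groups are finitely generated free-abelian, so a finite presentation of $\gen{\Sseti}$ can be read off, settling (ii); and membership of a given $g\in \pcg{\Gri}$ is decided by attempting to trace $g$ as a loop in the folded graph, settling (i).

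The hard part will be termination of the folding. In contrast to the free-group case, where each fold strictly lowers the number of edges, a fold over a nontrivial edge group may enlarge a vertex group, so mere edge-counting does not close the argument. The crux is therefore to produce a well-founded complexity measure that strictly decreases under every move. This is exactly where coherence (\Cref{thm: coherent <-> chordal}) is used: the vertex and edge groups are free-abelian of rank bounded by the largest clique of $\Gri$, hence \emph{Noetherian}, so the ascending chains of subgroups created during folding must stabilise; combining this with the edge count yields a bound on the length of the folding sequence. Making this quantitative --- proving that the sequence is finite, and effectively so, and hence that the induced splitting of $\gen{\Sseti}$ is genuinely computable --- is the technical heart of the proof and the step I expect to demand the most care.
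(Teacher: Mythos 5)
The paper gives no proof of this statement at all---it is imported verbatim from Kapovich--Weidmann--Myasnikov \cite{kapovich_foldings_2005} with only a citation---and your outline is essentially a faithful reconstruction of the argument in that cited source: the clique-separator (clique-tree) splitting of a chordal RAAG as a finite graph of groups with finitely generated free-abelian vertex and edge groups, followed by the effective folding machinery of \cite{kapovich_foldings_2005}, with termination secured by Noetherianity of the abelian vertex and edge groups combined with the decreasing edge count. So your approach is correct and coincides with the proof the paper relies on; there is nothing to flag beyond noting that the quantitative termination analysis you identify as the technical heart is indeed where the cited paper invests its effort.
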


We remark the pertinacious absence of $\pcg{\Cycle[4]} = \Free[2] \times \Free[2]$ from any family of algorithmically well behaved groups. However, the exact boundary of the class of RAAGs having solvable~\SMP\ is not known: chordal groups have it, and $\pcg{\Cycle[4]}$ does not. Which of the groups $\pcg{\Cycle[n]}$, for $n\geq 5$, have solvable $\SMP$? Is it possible to find a characterization of the RAAG's with solvable~$\SMP$?

Finally, we recall that for submonoids, the exact border for the corresponding membership problem is already known: In~\cite{lohrey_submonoid_2008} is proved that the submonoid membership problem is solvable in a PC-group if and only if it is Droms. Note that this implies, in particular, that $\pcg{\Path[4]}$ is a group with solvable subgroup membership problem (it is chordal), but unsolvable submonoid membership problem~(it is not Droms).

\subsection{Droms groups}

Due to the recursive description in Corollary~\ref{cor: recursive Droms graphs}, any Droms graph $\Gri$ decomposes as the join of its central part $\Zenter{\Gri}\simeq \Kgraph[m]$, and the full subgraph $\Gri_0=\Gri \setmin \Zenter{\Gri} \leqslant \Gri$, that is
$
\Gri = \Kgraph[m] \join \dGri
$,
where $m\geq 0$ and $\Gri_0$ being either \emph{empty} or a \emph{disconnected} Droms graph; this is called the \emph{primary decomposition} of $\Gri$. In particular:
\begin{enumerate*}[ind]
\item $\pcg{\Gri}$ is free-abelian $\Biimp$
$\Gri$ is complete $\Biimp \Zenter{\Gri} = \Gri \Biimp \dGri = \varnothing$.
\item $\Gri$ is connected $\Biimp$ $\Zenter{\Gri} \neq \varnothing \Biimp m\geq 1\Biimp \Gri$ is a cone.
\item $\pcg{\Gri}$ is centerless $\Biimp \Gri$ is disconnected $\Biimp \Zenter{\Gri} = \varnothing \Biimp m=0$.
\end{enumerate*}


\begin{rem}\label{rem: subgroups of Droms are Droms}
However, all subgroups of Droms groups (including the non finitely generated ones) are again $\{\Path[4],\Cycle[4]\}$-free PC-groups. In particular, every finitely generated subgroup of a Droms group is again a Droms group.
\end{rem}



Finally, we need the following algorithmic result for later use.

\begin{prop}\label{prop: computable bases for Droms groups}
Let $\pcg{\Gri} = \pres{\Geni}{\Reli}$ be a Droms group. Then, there exists an algorithm which, given words $w_1(\Geni),\ldots ,w_p(\Geni)$ in the generators of $\pcg{\Gri}$,
\begin{enumerate*}[ind]
\item computes a basis for the subgroup $H=\gen{w_1,\ldots ,w_p} \leqslant \pcg{\Gri}$;
\item writes the basis elements in terms of the original generators, and vice versa.
\end{enumerate*}
\end{prop}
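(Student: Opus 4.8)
The plan is to reduce everything to finding a \emph{graphical} generating set for the finitely generated subgroup $H=\gen{w_1,\ldots,w_p}\leqslant \pcg{\Gri}$, exploiting that $H$ is itself a Droms group. First I would invoke \Cref{thm: K-W-M}(ii): since $\pcg{\Gri}$ is coherent (its graph is Droms, hence chordal), we can algorithmically compute a finite presentation $\pres{X}{R}$ of $H$, where the generators $X=\{x_1,\ldots,x_q\}$ are explicit words in $\Geni$ (and thus each $x_i$ is a concrete element of $\pcg{\Gri}$). By \Cref{rem: subgroups of Droms are Droms}, $H$ is again a Droms group, so $H\simeq \pcg{\Grii}$ for some (by \Cref{thm: PC-group properties}\ref{thm: Droms isomorphic PC-groups}, unique up to isomorphism) Droms graph $\Grii$; the task is to produce such a $\Grii$ together with a basis and to certify the match.

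The core is a dovetailed brute-force search producing a \emph{verifiable certificate of isomorphism}. Enumerate in parallel: a Droms graph $\Grii$ on a vertex set $\set{y_1,\ldots,y_n}$, candidate basis words $b_1,\ldots,b_n$ in the $x_i$ (so that automatically $b_j\in H$), and candidate words $\phi(x_1),\ldots,\phi(x_q)$ in the $y_j$. Reading $\psi(y_j)=b_j(x_1,\ldots,x_q)$ as an element of $\pcg{\Gri}$, and $\phi$ as the assignment $x_i\mapsto \phi(x_i)$, I would verify, using solvability of the word problem in both $\pcg{\Gri}$ and $\pcg{\Grii}$ (\Cref{thm: PC-group properties}): (a) $[b_i(x),b_j(x)]=\trivial$ in $\pcg{\Gri}$ for every edge $\set{y_i,y_j}$ of $\Grii$, so that $\psi$ extends to a homomorphism $\pcg{\Grii}\to \pcg{\Gri}$ with image in $H$; (b) every relator $r\in R$ maps to $\trivial$ under $x_i\mapsto\phi(x_i)$, so that $\phi$ extends to a homomorphism $H\to\pcg{\Grii}$; and (c) $\psi(\phi(x_i))=x_i$ in $\pcg{\Gri}$ for all $i$, and $\phi(\psi(y_j))=b_j(\phi(x_1),\ldots,\phi(x_q))=y_j$ in $\pcg{\Grii}$ for all $j$.

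If (a)--(c) all hold, then $\psi$ and $\phi$ are mutually inverse homomorphisms, hence $\psi\colon\pcg{\Grii}\xrightarrow{\ \sim\ }H$ is an isomorphism and $\set{b_1,\ldots,b_n}$ is a basis of $H$ realizing the commutation graph $\Grii$. Conversely, since $H$ is a Droms group, taking $\Grii$ to be its defining graph and $b_j$ the images of a genuine basis supplies such a certificate, so the search always terminates (one may bound each round by computing $n$ as the free rank of $H^{\mathrm{ab}}$ from $\pres{X}{R}$, using torsion-freeness). For part (ii): the chosen $b_j$ are words in the $x_i$ and the $x_i$ are words in $\Geni$, so composing gives each basis element as a word in $\Geni$; and to express each input word $w_k$ in the basis, I would search (again verifying with the word problem in $\pcg{\Gri}$) for a word $\omega_k$ in the $y_j$ with $\psi(\omega_k)=w_k$, which exists precisely because $w_k\in H=\psi\bigl(\pcg{\Grii}\bigr)$.

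\emph{The main obstacle} is not producing tuples $b_j$ satisfying the commutation relations of \emph{some} Droms graph --- such tuples are easy to generate and may fail to present $H$ faithfully (for instance a non-injective map $\Free[n]\to H$ obeying all required non-commutations) --- but rather certifying that the candidate map $\psi$ is a genuine isomorphism and not a proper quotient. This is exactly what forces the use of a finite presentation $\pres{X}{R}$ of $H$ (\Cref{thm: K-W-M}): it lets us define a reverse homomorphism $\phi$ and check its well-definedness by the word problem, so that the mutual-inverse verification upgrades a mere surjection to an isomorphism, entirely bypassing any direct injectivity-decision procedure.
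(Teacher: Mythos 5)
Your proof is correct, but it takes a genuinely different route from the paper's. The paper also starts from \Cref{thm: K-W-M}(ii) to get a finite presentation of $H$, but then proceeds in two separate stages: first it exhaustively enumerates Tietze transformations until it finds a presentation of $H$ in \emph{graphical} form (terminating because $H$ is known to be a RAAG), and only then searches for a homomorphism from that graphical presentation onto $H$, checking surjectivity by enumerating words in the images until every given generator of $H$ appears; the final, crucial step is an appeal to Hopficity of RAAGs (\Cref{thm: PC-group properties}\ref{prop: RAAGs are Hopfian}) to upgrade the surjection to an isomorphism. You instead dovetail the search for the graph $\Grii$ together with candidate maps \emph{in both directions}, and verify a two-sided inverse certificate $(\psi,\phi)$ using only the solvable word problem; this collapses the paper's two stages into one and --- more interestingly --- makes Hopficity entirely unnecessary, since mutually inverse homomorphisms certify an isomorphism directly. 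The trade-off: the paper's surjectivity test is only a semi-decision per candidate (it halts only on surjective maps, which is fine inside the dovetailing), whereas each of your checks (a)--(c) is fully decidable per candidate; your argument is thus slightly more self-contained, while the paper's factors the problem through the intrinsic graphical presentation of $H$ before searching. One small mismatch with how the proposition is used later (\eg in the proof of \Cref{thm: SIP-center}): the paper's search delivers the basis elements written as words in the \emph{given generators} $w_1,\ldots,w_p$, whereas your composition only expresses them as words in $\Geni$. This is not a real gap --- since each $b_j$ lies in $H$, the same brute-force enumeration over words in $\set{w_1,\ldots,w_p}^{\pm}$ verified by the word problem (or a direct appeal to the solvable membership problem of \Cref{thm: K-W-M}(i), which returns an expression in the $w_i$'s) supplies the missing expressions --- but it should be said explicitly. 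Your observation that the number of vertices of $\Grii$ can be precomputed as the rank of $H^{\mathrm{ab}}$ is correct (by \Cref{thm: PC-group properties}(ii)) and is a nice bound the paper does not use.
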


\begin{proof}
Since Droms graphs are chordal, by~\Cref{thm: K-W-M}, we can effectively compute a finite presentation for $H$, say $H = \pres{\Genii}{\Relii}$.
Then, one can exhaustively explore the tree of all possible Tietze transformations applied to $\pres{\Genii}{\Relii}$ until getting one, say $\pres{\Geniii}{\Reliii}$, in graphical form (namely, with all relators being commutators of certain pairs of generators); this will be achieved in finite time because we know in advance that $H$ is indeed a RAAG.

At this point, we know that $H=\gen{w_1,\ldots ,w_p}\isom \pres{\Genii}{\Relii}\isom \pres{\Geniii}{\Reliii}$, and need to compute expressions for the basis elements $\Geniii = \set{z_1,\ldots,z_r}$ in terms of the $w_i$'s and vice versa. We start a brute force search, using the following two parallel procedures:
\begin{enumerate}[seq]
\item enumerate all homomorphisms $\pres{\Geniii}{\Reliii}\rightarrow H=\gen{w_1,\ldots ,w_p}$; this can be done enumerating all possible $r$-tuples $(v_j )_{j=1}^{r}$ of words in $\set{w_1,\ldots ,w_p}^{\pm}$, and checking whether they determine a well-defined homomorphism.
\item for each such homomorphism $z_j \mapsto v_j$, $j=1,\ldots,r$, analyze whether it is onto $H$ by enumerating all words in $\set{v_1,\ldots,v_r}^{\pm}$, and checking whether each of $w_1, \ldots ,w_p$ appear in the list.
\end{enumerate}
Since we know that $\pres{\Geniii}{\Reliii}\isom H$, there exists a surjective homomorphism from $\pres{\Geniii}{\Reliii}$ onto $H$ and so, the above procedure will eventually find and output one of them, $z_j \mapsto v_j$, $j=1, \ldots ,r$. Finally, since RAAG's are Hopfian (see Theorem~\ref{thm: PC-group properties}\ref{prop: RAAGs are Hopfian}), such a surjective homomorphism is indeed an isomorphism. We then have the $z_j$'s written as words on the $w_i$'s, and the $w_i$'s as words on the $z_j$'s from the stopping criteria at step (2).
\end{proof}

\subsection{Proof of the main result}

The main result in the present paper easily reduces to Theorems~\ref{thm: ESIP-center} and~\ref{thm: ESIP-free}.

\begin{proof}[Proof of \Cref{thm: SIP Droms}.] Let $\Gri$ be a Droms graph, and let $\pcg{\Gri}$ be the corresponding Droms PC-group. We will prove \Cref{thm: SIP Droms} by induction on the number of vertices~$|\Vertexs\Gri|$. If $|\Vertexs\Gri|=0$, then $\pcg{\Gri}=\trivial$, and obviously has solvable \ESIP.

Now, consider a nonempty Droms graph $\Gri$, and assume that every Droms PC-group with strictly less than $|\Vertexs\Gri|$ vertices has solvable \ESIP. Consider the primary decomposition of $\Gri$, say $\Gri=\Kgraph[m]\join \dGri$. If $\Gri_0$ is empty then $\Gri$ is complete, $\pcg{\Gri}\simeq \Z^m$ is free-abelian and so, it has solvable \ESIP. Otherwise, $\Gri_0$ is disconnected, say $\Gri_0 =\Gri_1 \join \Gri_2$ with $\Gri_1$ and $\Gri_2$ being Droms again. By induction, both $\pcg{\Gri_1}$ and $\pcg{\Gri_2}$ have solvable \ESIP, by Theorem~\ref{thm: ESIP-free} $\pcg{\Gri_0}=\pcg{\Gri_1}*\pcg{\Gri_2}$ has solvable \ESIP, and by Theorem~\ref{thm: ESIP-center} $\pcg{\Gri}=\Z^m \times \pcg{\Gri_0}$ also has solvable \ESIP.
\end{proof}


\section{The direct product case} \label{sec: Droms direct product case}

This section is devoted to proving~\Cref{thm: SIP-center,thm: ESIP-center}. To this end, we analyze the Droms groups presented by connected graphs.

\subsection{Preparation}

For all this subsection, we fix an arbitrary connected non-complete Droms graph $\Gri$ and its primary decomposition ${\Gri =\Kgraph[m] \join \Gri_0}$, where $m\geqslant 1$ and $\Gri_0$ is a disconnected Droms graph. Algebraically, ${\pcg{\Gri}=\ZZ^m \times \pcg{\Gri_0}}$, $\Zenter{\pcg{\Gri}}=\ZZ^m$, and $\pcg{\Gri_0}$ is a nontrivial free product. Let ${\Vertexs\Gri_0 = \Geni = \{x_1,\ldots, x_n\}}$ and ${\Vertexs \Kgraph[m] = T = \{ t_1,\ldots ,t_m\}}$.

Every element in $\pcg{\Gri}$ can be written as a word on $\set{ t_1,\ldots ,t_m,x_1, \ldots, x_n }$, where the~$t_i$'s are free to move to any position. We will systematically write all these $t_i$'s ordered on the left, and we will abbreviate them as a vectorial power of a formal symbol `$\mathrm{t}$'. This way, every element in $\pcg{\Gri}=\ZZ^m \times \pcg{\Gri_0}$ can be written in the form
$
t_1^{a_1}\cdots t_m^{a_m}\,u(x_1, \ldots ,x_n) = \faf{a}{u}(x_1,\ldots ,x_n)
$, 
where $\vect{a}=(a_1, \ldots ,a_m)\in \ZZ^m$, and $u=u(x_1,\ldots ,x_n)$ is a word on the $x_i$'s. Clearly, the product of elements is then given by the rule $(\faf{a}{u}) \cdot (\faf{b}{v}) = \faf{a+b}{uv}$.

Quotienting by the center of $\pcg{\Gri}$ gives rise to the short exact sequence
\begin{equation}\label{ses}
 \begin{array}{cccccccccc}
1&\longto &\ZZ^m&\longto  &\pcg{\Gri} & \overset{\prio}\longto &\pcg{\Gri_0}&\longto &1 \, , \\ &&&&\faf{a}{u} &\longmapsto &u
 \end{array}
\end{equation}
where $\prio$ just erases the occurrences of letters in~$T^{\pm} = \{ t_1,\ldots ,t_m\}^{\pm}$.

\begin{defn}
For a given subgroup $H \leqslant  \pcg{\Gri}$, and an element $u\in \pcg{\Gri_0}$, we define the \defin{(abelian) completion} of $u$ in $H$ (the $H$-\emph{completion} of $u$, for short) to be the set
$
\cab{u}{H} = \Set{ \vect{a}\in \ZZ^m \mid \faf{a}{u}\in H } .
$
\end{defn}

\begin{lem}\label{completion}
The completion $\cab{u}{H}$ is either empty (when $u\notin H\prio$), or a coset of~$\ZZ^m \cap H$. More precisely, if $u_1, \ldots ,u_n\in H\prio$, and $\omega (u_1, \ldots ,u_n)$ is an arbitrary word on them, then
$
\cab{\omega(u_1, \ldots ,u_n)}{H} = \sum_{i=1}^{n} \bomega_i \, \cab{u_i}{H}
$, 
where $\bomega_i =|\omega|_i$ is the total exponent of the variable $u_i$ in $\omega$. \qed
\end{lem}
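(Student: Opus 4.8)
The plan is to pin down an explicit coset representative for a nonempty completion and then reduce the whole statement to additive bookkeeping of the central $t$-exponents, which behave linearly because $\ZZ^m = \Zenter{\pcg{\Gri}}$ is central and multiplies according to the rule $\faf{a}{u}\cdot \faf{b}{v} = \faf{a+b}{uv}$. Throughout I would write $L = \ZZ^m \cap H$, viewed as a subgroup of $\ZZ^m$; note $L = \cab{1}{H}$. First I would dispose of the dichotomy: since $\prio(\faf{a}{u}) = u$ for every $\vect{a}\in\ZZ^m$, there is some $\vect{a}$ with $\faf{a}{u}\in H$ if and only if $u\in H\prio$, so $\cab{u}{H}=\varnothing$ exactly when $u\notin H\prio$.

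Next, assuming $\cab{u}{H}\neq\varnothing$, I would fix a witness $\vect{a_0}\in\cab{u}{H}$, so $\faf{a_0}{u}\in H$. Using $(\faf{a_0}{u})^{-1} = \faf{-a_0}{u^{-1}}$ and the product rule, for any $\vect{a}$ one gets $(\faf{a}{u})(\faf{a_0}{u})^{-1} = \faf{a-a_0}{1}\in\ZZ^m$. Since $\faf{a_0}{u}\in H$, this yields the chain of equivalences $\faf{a}{u}\in H \Leftrightarrow \faf{a-a_0}{1}\in H\cap\ZZ^m \Leftrightarrow \vect{a}-\vect{a_0}\in L$, so $\cab{u}{H}=\vect{a_0}+L$ is indeed a coset of $L$.

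For the linearity, given $u_1,\ldots,u_n\in H\prio$ I would choose representatives $\vect{a_i}\in\cab{u_i}{H}$, i.e. $\faf{a_i}{u_i}\in H$, and substitute $\faf{a_i}{u_i}$ for each $u_i$ in $\omega$. Because the $t$-part is central and collects additively, a direct normal-form computation gives $\omega(\faf{a_1}{u_1},\ldots,\faf{a_n}{u_n}) = \faf{\,\sum_i \bomega_i \vect{a_i}\,}{\omega(u_1,\ldots,u_n)}$, with $\bomega_i=|\omega|_i$. This element lies in $H$, so applying $\prio$ confirms $\omega(u_1,\ldots,u_n)\in H\prio$, and its $t$-exponent witnesses $\sum_i \bomega_i \vect{a_i}\in\cab{\omega(u_1,\ldots,u_n)}{H}$. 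By the coset description already established, $\cab{\omega(u_1,\ldots,u_n)}{H}=\sum_i\bomega_i\vect{a_i}+L$, which is exactly $\sum_i \bomega_i \cab{u_i}{H}$.

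The one genuine subtlety — and the point I expect to be the main obstacle to state correctly — is the meaning of the right-hand side $\sum_i \bomega_i \cab{u_i}{H}$. I would emphasize that it must be read as a $\ZZ$-linear combination of cosets \emph{in the quotient abelian group} $\ZZ^m/(\ZZ^m\cap H)$, equivalently as the coset of $\sum_i \bomega_i \vect{a_i}$ for any choice of representatives $\vect{a_i}$; it is \emph{not} the Minkowski sum of the scaled sets $\bomega_i\cab{u_i}{H}$, which would replace $L$ by $\gcd(\bomega_1,\ldots,\bomega_n)\,L$ and give a strictly smaller set. Once this reading is fixed, the proof is entirely routine normal-form computation.
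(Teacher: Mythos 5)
Your proof is correct and is exactly the routine computation the paper intends: the lemma is stated with a closing \qed{} and no written proof, being considered immediate from the multiplication rule $(\faf{a}{u})\cdot(\faf{b}{v})=\faf{a+b}{uv}$, and your three steps (emptiness precisely when $u\notin H\prio$; the coset structure via $(\faf{a}{u})(\faf{a_0}{u})^{-1}=\faf{a-a_0}{1}\in\ZZ^m$; and linearity by substituting fixed completions $\faf{a_i}{u_i}$ into $\omega$ and collecting central $t$-exponents) are the natural way to fill it in. Your caveat about reading $\sum_{i}\bomega_i\,\cab{u_i}{H}$ as a $\ZZ$-linear combination of cosets of $\ZZ^m\cap H$ (rather than a Minkowski sum of pointwise-scaled sets, which would wrongly shrink $L$ to $\gcd(\bomega_1,\ldots,\bomega_n)L$) is also the right one, and it matches how the paper actually uses the lemma in the proof of \Cref{thm: ESIP-center}, where $\sum_i \lambda_i\,\cab{w_i}{H_j}$ is evaluated as $\sum_i\lambda_i\vect{d_i}+L_j$ with a single summand $L_j=\ZZ^m\cap H_j$.
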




\begin{lem}\label{lem: subgroups of Z^m x disc Droms}
Let $\pcg{\Gri} = \ZZ^m \times \pcg{\Gri_0}$ be the primary decomposition of a connected Droms group. Then, any subgroup $H\leqslant \pcg{\Gri}$ splits as $H=(\ZZ^m \cap H) \times H \prio \sect$, where $\prio \colon \pcg{\Gri} \to \pcg{\dGri}$ is the natural projection killing the center of $\pcg{\Gri}$, and~${\sect \colon H \prio \to H}$ is a section of $\prio{}_{\mid H}$.
\end{lem}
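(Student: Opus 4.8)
The plan is to reduce the statement to producing a \emph{homomorphic} section $\sect\colon H\prio \to H$ of the restricted projection $\prio{}_{\mid H}$, and then to build one explicitly. First I would check that any such section already forces the asserted splitting. Since $\ZZ^m=\Zenter{\pcg{\Gri}}$ is the centre of $\pcg{\Gri}$, the subgroup $\ZZ^m\cap H$ is central in $H$, so it commutes with $\sect(H\prio)=H\prio\sect$. These two subgroups meet trivially, because an element of $\sect(H\prio)$ lying in $\ker\prio$ is $\sect(\trivial)=\trivial$; and together they generate $H$, because for $h\in H$ with $u=h\prio$ the element $h\,\sect(u)^{-1}$ lies in $\ker\prio\cap H=\ZZ^m\cap H$. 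Hence $H=(\ZZ^m\cap H)\times H\prio\sect$, exactly as claimed.

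To build the section I would use the product coordinates $\faf{a}{u}$ together with the completions of \Cref{completion}. For each $u\in H\prio$ the set $\cab{u}{H}$ is a coset of $\ZZ^m\cap H$, so it determines a well-defined element $\delta(u)\in \ZZ^m/(\ZZ^m\cap H)$. Taking $\omega=u_1u_2$ in the additivity formula of \Cref{completion} gives $\cab{uv}{H}=\cab{u}{H}+\cab{v}{H}$, which says precisely that $\delta\colon H\prio \to \ZZ^m/(\ZZ^m\cap H)$ is a group homomorphism. Now suppose $\delta$ lifts to a homomorphism $\tilde\delta\colon H\prio\to\ZZ^m$ along the natural projection $\pi\colon\ZZ^m\twoheadrightarrow\ZZ^m/(\ZZ^m\cap H)$. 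Then I can set $\sect(u)=\faf{\tilde\delta(u)}{u}$: the relation $\pi\tilde\delta=\delta$ guarantees $\tilde\delta(u)\in\cab{u}{H}$, so $\sect(u)\in H$; moreover $\sect(u)\sect(v)=\faf{\tilde\delta(u)+\tilde\delta(v)}{uv}=\sect(uv)$ and $\prio\sect=\mathrm{id}$, so $\sect$ is the desired homomorphic section.

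Everything therefore reduces to lifting $\delta$ through $\pi$, which I expect to be the only real difficulty (a priori one faces an $\mathrm{Ext}^{1}$-obstruction). The key observation that removes it is structural: $H\prio$ is a subgroup of the Droms group $\pcg{\dGri}$, hence by \Cref{rem: subgroups of Droms are Droms} it is itself a (possibly non finitely generated) PC-group, so its abelianization is free-abelian by \Cref{thm: PC-group properties}. Since $\delta$ factors through $(H\prio)^{\mathrm{ab}}$, which is a free $\ZZ$-module and thus projective, the surjection $\pi$ admits the required lift $\tilde\delta$, completing the construction. It is worth noting that this argument uses only that $\ZZ^m$ is central and that $H\prio$ has free-abelian abelianization, and in particular never invokes the disconnectedness of $\dGri$; the delicate point is simply to invoke projectivity of free-abelian groups of arbitrary rank, so that the lift survives even when $H\prio$ is infinitely generated.
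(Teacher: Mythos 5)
Your proof is correct, but it takes a genuinely different route from the paper's. The paper exploits the graphical presentation directly: since $H\prio$ is a PC-group (\Cref{rem: subgroups of Droms are Droms}), it admits a presentation $\pres{Y}{\Relii}$ in which every relator is a commutator of generators, and because the kernel $\ZZ^m$ is central, \emph{any} set-theoretic lift of the basis $Y$ to $\prio$-preimages in $H$ automatically respects those commutator relations and so extends to a homomorphic section --- no lifting problem is ever solved, it simply does not arise. You instead package the choice of preimages into the single homomorphism $\delta\colon H\prio \to \ZZ^m/(\ZZ^m\cap H)$, $u \mapsto \cab{u}{H}$ (well defined and additive by \Cref{completion}, whose proof is where centrality enters on your side), and then solve the resulting lifting problem homologically: $\delta$ factors through $(H\prio)^{\mathrm{ab}}$, which is free-abelian of possibly infinite rank since $H\prio$ is a PC-group (Theorem~\ref{thm: PC-group properties}(ii)), hence projective as a $\ZZ$-module, so the $\mathrm{Ext}^1$-obstruction vanishes. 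Both arguments use the Droms hypothesis only through $H\prio$ being a PC-group, and neither uses disconnectedness of $\Gri_0$, as you observe; but they extract different consequences of the PC structure. Your version isolates the weaker sufficient condition ``central kernel plus free-abelian abelianization of the image'', which makes transparent why the lemma is special to this setting --- subgroups of general RAAGs can have torsion in their abelianizations, killing projectivity --- whereas the paper's version uses the full commutator presentation, which has the practical advantage that the section is given concretely on a graphical basis of $H\prio$; that concrete form is what the algorithm in the proof of Theorem~\ref{thm: SIP-center} later exploits when computing completions of the basis elements. Your preliminary reduction (central kernel plus homomorphic section forces the internal direct product $H=(\ZZ^m\cap H)\times H\prio\sect$) is sound and coincides with the paper's closing observation that triviality of the conjugation action upgrades the split extension to a direct product.
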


\begin{proof}
Let $\Geni =\set{\geni_1,\ldots,\geni_n}$ be the (finite) set of vertices of $\dGri$ (\ie let $\pcg{\dGri}=\pres{\Geni}{\Reli}$, where $R \subseteq [X,X]$); and let~$\ZZ^m = \pres{t_1 ,\ldots ,t_m}{[t_i,t_j] \ \forall i,j}$. Now, consider the restriction to $H\leqslant \ZZ^m \times \pcg{\dGri}$ of the natural short exact sequence~(\ref{ses}):
\begin{labeledcd}{5}
1 & \longto &\ZZ^m \isom \Zenter{\pcg{\Gri}} & \longto & \pcg{\Gri} & \overset{\prio}\longto & \pcg{\dGri} & \longto & 1 \phantom{\, ,} \\ && \rotatebox[origin=c]{90}{$\leqslant$} && \rotatebox[origin=c]{90}{$\leqslant$} && \rotatebox[origin=c]{90}{$\leqslant$} \nonumber\\
1 & \longto &\ZZ^m \cap \Sgpi  & \longto & \Sgpi & \longto & H \prio & \longto & 1 \, ,                         \label{eq: sses Z^m x disc Droms}
\end{labeledcd}
\noindent
Since $\pcg{\dGri}$ is Droms, we know that $H \prio \leqslant \pcg{\dGri}$ is again a PC-group. Thus, there exists a (not necessarily finite) subset $Y=\set{y_j}_j \subseteq \pcg{\dGri}$ such that $H\prio \isom \pres{Y}{\Relii}$, where $\Relii$ is a certain collection of commutators of the $y_j$'s.

Now, observe that any map $\sect\colon Y\to H$ sending each $y_j \in Y$ back to any of its $\prio$-preimages in $H$ will necessarily respect the relations in $\Relii$: indeed, for each commutator $[y_i,y_j] \in \Relii$, we have $[y_i\sect,y_j\sect]= [t^{\vect{a_i}} y_i,t^{\vect{a_j}}y_j] = [y_i,y_j]$ (for certain abelian completions $\vect{a_i}, \vect{a_j} \in \ZZ^m$). Therefore, any such map $\sect$ defines a (injective) section of the restriction~$\prio{}_{\mid H}$. Thus, the short exact sequence \eqref{eq: sses Z^m x disc Droms} splits and, for any such section $\sect$, $H\prio \isom H\prio \sect \leqslant H$.
Moreover, since the kernel of the subextension \eqref{eq: sses Z^m x disc Droms} lies in the center of $\pcg{\Gri}$, the conjugation action is trivial, and the claimed result follows.
\end{proof}

\begin{cor}\label{cor: H fg <-> H pi fg}
In the above situation, the subgroup $H\leqslant \pcg{\Gri}$ is finitely generated if and only if $H\prio\leqslant \pcg{\Gri_0}$ is finitely generated. \qed
\end{cor}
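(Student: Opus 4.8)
The plan is to read off the equivalence directly from the splitting established in \Cref{lem: subgroups of Z^m x disc Droms}, namely $H=(\ZZ^m \cap H) \times H\prio\sect$, so that almost all the work has already been done.

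The forward implication is immediate and does not even need the splitting: since $\prio\colon \pcg{\Gri}\to \pcg{\Gri_0}$ is a homomorphism and $H\prio$ is precisely its image on $H$, any finite generating set $\{h_1,\ldots,h_p\}$ of $H$ maps to a finite generating set $\{h_1\prio,\ldots,h_p\prio\}$ of $H\prio$. Hence $H$ finitely generated forces $H\prio$ finitely generated.

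For the converse, I would assume $H\prio$ finitely generated and exploit the direct decomposition. By \Cref{rem: subgroups of Droms are Droms}, $H\prio\leqslant \pcg{\Gri_0}$ is itself a finitely generated Droms group, so it has a finite graphical generating set $Y=\{y_1,\ldots,y_k\}$; applying the section $\sect$ of $\prio{}_{\mid H}$ from \Cref{lem: subgroups of Z^m x disc Droms} to this finite set exhibits $H\prio\sect\isom H\prio$ as finitely generated (by $\{y_1\sect,\ldots,y_k\sect\}$). On the other hand, $\ZZ^m\cap H$ is a subgroup of the finitely generated free-abelian group $\ZZ^m$ and is therefore finitely generated, since $\ZZ^m$ is Noetherian. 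As $H=(\ZZ^m\cap H)\times H\prio\sect$ is then a direct product of two finitely generated groups, it is finitely generated.

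The entire content of the statement thus resides in the preceding lemma: once the relevant short exact sequence is known to split, the equivalence reduces to the elementary facts that homomorphic images and finite direct products of finitely generated groups are finitely generated, and that subgroups of $\ZZ^m$ are finitely generated. I do not expect any genuine obstacle at this stage; the only point to keep in mind is that the section must be taken on a \emph{finite} generating set of $H\prio$, which is available exactly because $H\prio$ is assumed finitely generated.
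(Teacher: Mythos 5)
Your proposal is correct and follows exactly the route the paper intends: the corollary is stated with no separate proof precisely because it is immediate from the splitting $H=(\ZZ^m\cap H)\times H\prio\sect$ of \Cref{lem: subgroups of Z^m x disc Droms}, combined with the elementary facts you cite (homomorphic images of f.g.\ groups are f.g., subgroups of $\ZZ^m$ are f.g., and finite direct products of f.g.\ groups are f.g.). Your only superfluous step is invoking \Cref{rem: subgroups of Droms are Droms} to get a \emph{graphical} generating set: since $\sect$ is already a homomorphism on all of $H\prio$, it carries \emph{any} finite generating set of $H\prio$ to one of $H\prio\sect$.
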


\begin{rem}
For any $H=\gen{ \mathrm{t}^{\mathbf{b_1} },\ldots ,\mathrm{t}^{\mathbf{b_r} }, \faf{a_1}{u_1}, \ldots , \faf{a_s}{u_s} }\leqslant \pcg{\Gri}=\ZZ^m \times \pcg{\Gri_0}$, where $u_i\neq 1$ for all $i=1,\ldots,s$, we have $\gen{\mathrm{t}^{\mathbf{b_1}}, \ldots ,\mathrm{t}^{\mathbf{b_r}}} \leqslant \ZZ^m\cap H=\zenter{\pcg{\Gri}} \cap H \leqslant \zenter{H}$, but these two inclusions are, in general, not equalities: for the first one, a nontrivial product of the last $s$ generators could, in principle, be equal to $\mathrm{t}^{\mathbf{c}}$ for some element~$\mathbf{c}\not\in \gen{\mathbf{b_1},\, \ldots ,\, \mathbf{b_r}}$; and for the second we could have, for example, $u_1=\cdots =u_s\neq 1$ so that $\faf{a_1}{u_1}$ belongs to $\zenter{H}$ but not to $\zenter{\pcg{\Gri}}$.
\end{rem}

Let us consider now two finitely generated subgroups $\Sgpi_1,\Sgpi_2 \leqslant \pcg{\Gri}$ and analyze when the intersection $\Sgpi_1 \cap \Sgpi_2$ is again finitely generated. We will see that the behaviour of the embedding $(\Sgpi_1 \cap \Sgpi_2)\prio \leqslant (H_1)\prio \cap (H_2)\prio$ is crucial to this end.

\begin{lem}\label{lem: intersection inclusions}
Let $H_1,H_2\leqslant \pcg{\Gri}$. Then,
\begin{enumerate}[ind]
\item $(H_1\cap H_2)\prio \leqslant H_1\prio \cap H_2\prio$, sometimes with strict inclusion;
\item $(H_1\cap H_2)\prio \normaleq H_1\prio \cap H_2\prio$;
\item $[(H_1)\prio \cap (H_2)\prio, (H_1)\prio \cap (H_2)\prio ]\leqslant (H_1 \cap H_2)\prio$.
\end{enumerate}
\end{lem}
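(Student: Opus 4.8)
The plan is to exploit that the short exact sequence~\eqref{ses} is a \emph{central} extension, so that conjugation and commutation of lifted elements are insensitive to the central ($\mathrm{t}$-)coordinates. Concretely, from the product rule $(\faf{a}{u})(\faf{b}{v}) = \faf{a+b}{uv}$ (and hence $(\faf{a}{u})^{-1} = \faf{-a}{u^{-1}}$) a one-line computation gives, for all $\vect{a},\vect{b}\in\ZZ^m$,
\[
(\faf{b}{v})^{-1}(\faf{a}{u})(\faf{b}{v}) = \faf{a}{v^{-1}uv}
\qquad\text{and}\qquad
[\faf{a}{u},\faf{b}{v}] = \faf{0}{[u,v]} .
\]
The point is that both right-hand sides are \emph{independent of the chosen central parts} $\vect{a},\vect{b}$ (the $\mathrm{t}$-exponents telescope to the same total). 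This single observation drives all three items.

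Item (i) is immediate: if $g\in H_1\cap H_2$ then $g\prio\in H_1\prio$ and $g\prio\in H_2\prio$, so $g\prio \in H_1\prio\cap H_2\prio$. For the strictness of the inclusion I would reuse the example from the introduction: in $\pcg{\Gri}=\ZZ\times\Free[2]=\gen{t}\times\gen{a,b}$ (the graph $\Path[3]$), the subgroups $H_1=\gen{a,b}$ and $H_2=\gen{ta,b}$ satisfy $H_1\prio=H_2\prio=\Free[2]$, while $H_1\cap H_2=\normalcl{b}$ projects onto the proper subgroup $\normalcl{b}\leqslant\Free[2]$.

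For item (ii), take $u\in(H_1\cap H_2)\prio$ and $v\in H_1\prio\cap H_2\prio$; choose a lift $\faf{a}{u}\in H_1\cap H_2$, and lifts $\faf{b}{v}\in H_1$, $\faf{c}{v}\in H_2$. Conjugating $\faf{a}{u}$ by $\faf{b}{v}$ stays in $H_1$, and conjugating by $\faf{c}{v}$ stays in $H_2$; but by the first displayed identity \emph{both} conjugates equal the same element $\faf{a}{v^{-1}uv}$, which therefore lies in $H_1\cap H_2$. Applying $\prio$ yields $v^{-1}uv\in(H_1\cap H_2)\prio$, which is the asserted normality. For item (iii), take $u,v\in H_1\prio\cap H_2\prio$ and lifts of each inside both $H_1$ and $H_2$. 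The commutator of the $H_1$-lifts lies in $H_1$, that of the $H_2$-lifts lies in $H_2$, and by the second displayed identity both equal $\faf{0}{[u,v]}$; hence $\faf{0}{[u,v]}\in H_1\cap H_2$ and $[u,v]\in(H_1\cap H_2)\prio$. As $(H_1\cap H_2)\prio$ is a subgroup and the commutator subgroup is generated by such $[u,v]$, the inclusion follows.

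The only genuinely substantive point---and the crux of the lemma---is the independence of the conjugate/commutator from the choice of lift, which is exactly what centrality of $\ZZ^m$ buys us: it lets a single group element serve \emph{simultaneously} as a witness inside $H_1$ and inside $H_2$. Once that is isolated, the three statements are formal, and I anticipate no real obstacle beyond bookkeeping the lifts.
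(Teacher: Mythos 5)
Your proposal is correct and follows essentially the same route as the paper: in both items (ii) and (iii) you lift the elements into $H_1$ and $H_2$ and use centrality of the $\ZZ^m$-coordinates to see that the conjugate (\resp the commutator) of the lifts is independent of the central parts, hence lands in $H_1\cap H_2$ and projects into $(H_1\cap H_2)\prio$. Your explicit strictness example ($H_1=\gen{a,b}$, $H_2=\gen{ta,b}$ in $\ZZ\times\Free[2]=\pcg{\Path[3]}$) and the remark that commutators generate $[H_1\prio\cap H_2\prio,H_1\prio\cap H_2\prio]$ are welcome details the paper leaves implicit.
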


\begin{proof}
(i). This is clear.

(ii). To see normality, consider $u\in (H_1 \cap H_2) \prio$, and $v\in (H_1)\prio \cap (H_2) \prio$; then, there exist elements $t^\vect{a}u \in H_1 \cap H_2$, and $t^{\vect{b_i}} v \in H_i$, for $i=1,2$. Now observe that $\fa{a} (v^{-1} u v) = v^{-1} (\faf{a}{u}) v=(\faf{b_i}{v})^{-1}(\faf{a}{u}) (\faf{b_i}{v})\in H_i$, for $i=1,2$. Thus, $\faf{a}{v^{-1} u v} \in H_1 \cap H_2$ and so, $v^{-1} u v \in (H_1 \cap H_2) \prio$.

(iii). Take $ u,v \in (H_1) \prio \cap (H_2)\prio$; then, there exist elements $t^{\vect{a_i}} u \in H_i$, $t^{\vect{b_i}} v \in H_i$, for $i=1,2$. Now, observe that $\comm{u}{v}=u^{-1} v^{-1} uv=(\faf{a_i}{u})^{-1}(\faf{b_i}{v})^{-1} (\faf{a_i}{u}) (\faf{b_i}{v})\in H_i$, for $i=1,2$. Thus, $\comm{u}{v}$ belongs to $H_1 \cap H_2$, and to $(H_1 \cap H_2)\prio$, as claimed.
\end{proof}


\begin{lem}\label{lem: infinitely gen -> infinitely gen}
If $(H_1\cap H_2)\prio$ (and so, $H_1\cap H_2$) is finitely generated, then $(H_1)\prio \cap (H_2)\prio$ is also finitely generated.
\end{lem}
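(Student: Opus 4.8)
The plan is to realize the quotient $\bigl(H_1\prio\cap H_2\prio\bigr)\big/(H_1\cap H_2)\prio$ as (isomorphic to) a subgroup of a finitely generated abelian group. Once this is done we are finished: writing $L=H_1\prio\cap H_2\prio$ and $N=(H_1\cap H_2)\prio$, the group $N$ is finitely generated by hypothesis and $L/N$ will be finitely generated, so $L$ is finitely generated as an extension of one finitely generated group by another. I would obtain the embedding directly through the abelian completions, rather than through the inclusions of \Cref{lem: intersection inclusions} (which it also re‑explains).

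\medskip
Concretely, for every $v\in L$ both completions $\cab{v}{H_1}$ and $\cab{v}{H_2}$ are nonempty by \Cref{completion}, being cosets of $\ZZ^m\cap H_1$ and $\ZZ^m\cap H_2$ respectively, so I may choose lifts $\faf{b}{v}\in H_1$ and $\faf{c}{v}\in H_2$. I would then define
\[
\phi\colon L \longrightarrow \ZZ^m\big/\bigl((\ZZ^m\cap H_1)+(\ZZ^m\cap H_2)\bigr),
\qquad
v\longmapsto (b-c) \ \ \mathrm{mod}\ (\ZZ^m\cap H_1)+(\ZZ^m\cap H_2).
\]
Two routine verifications come first. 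Well-definedness: replacing the lift of $v$ in $H_1$ (resp. in $H_2$) changes $b$ by an element of $\ZZ^m\cap H_1$ (resp. $c$ by an element of $\ZZ^m\cap H_2$), which is annihilated in the target. Homomorphism: given $v,w\in L$ with lifts $\faf{b}{v},\faf{b'}{w}\in H_1$ and $\faf{c}{v},\faf{c'}{w}\in H_2$, the products $\faf{b+b'}{vw}\in H_1$ and $\faf{c+c'}{vw}\in H_2$ are lifts of $vw$ by the rule $(\faf{b}{v})(\faf{b'}{w})=\faf{b+b'}{vw}$, so the differences add and $\phi(vw)=\phi(v)+\phi(w)$.

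\medskip
The crux is the kernel computation $\ker\phi=N$, and this is the step I expect to require the most care. If $\phi(v)=0$ then $b-c=p+q$ with $p\in\ZZ^m\cap H_1$ and $q\in\ZZ^m\cap H_2$; I would then exhibit a common lift by observing that $\faf{b-p}{v}=\faf{-p}{1}\cdot\faf{b}{v}\in H_1$ while simultaneously $\faf{b-p}{v}=\faf{q}{1}\cdot\faf{c}{v}\in H_2$, so $\faf{b-p}{v}\in H_1\cap H_2$ and hence $v\in N$. Conversely, any $v\in N$ admits a single lift lying in $H_1\cap H_2$, which forces $b\equiv c$ modulo both completion lattices and thus $\phi(v)=0$. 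This is exactly where the hypothesis that $v$ lies in both images $H_1\prio$ and $H_2\prio$ is used, and where membership in the sum $(\ZZ^m\cap H_1)+(\ZZ^m\cap H_2)$ must be translated back into the existence of a common completion. Granting this, $L/N\cong\operatorname{im}\phi$ is a subgroup of $\ZZ^m\big/\bigl((\ZZ^m\cap H_1)+(\ZZ^m\cap H_2)\bigr)$, which is a quotient of $\ZZ^m$ and therefore finitely generated abelian; hence $L/N$ is finitely generated, and with $N$ finitely generated we conclude that $L=H_1\prio\cap H_2\prio$ is finitely generated, as claimed. Everything beyond the kernel identification is formal bookkeeping with the completions.
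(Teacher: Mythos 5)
Your proof is correct, and it takes a genuinely different --- and more elementary --- route than the paper's. The paper proves the lemma by contradiction inside the theory of PC-groups: if $H_1\prio\cap H_2\prio$ were not finitely generated, it would be (by \Cref{rem: subgroups of Droms are Droms}) a PC-group over an \emph{infinite} $\{\Path[4],\Cycle[4]\}$-free graph $\Grii$; the finitely generated subgroup $(H_1\cap H_2)\prio$ then lives on a finite minimal full subgraph $\Grii'$, and combining normality (\Cref{lem: intersection inclusions}(ii)), the commutator containment (\Cref{lem: intersection inclusions}(iii)), and the bound on ranks of free-abelian subgroups (\Cref{thm: PC-group properties}(vi)), every vertex of $\Grii'$ is forced to be adjacent to every vertex outside $\Grii'$, which produces an induced $\Cycle[4]$ and contradicts the graph being Droms. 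You instead exhibit the quotient directly: writing $L_i=\ZZ^m\cap H_i$, your map $\phi\colon H_1\prio\cap H_2\prio\to\ZZ^m/(L_1+L_2)$, $v\mapsto b-c$, is well defined because by \Cref{completion} the completions $\cab{v}{H_1}$ and $\cab{v}{H_2}$ are cosets of $L_1$ and $L_2$; it is a homomorphism because the central coordinates simply add; and your kernel computation (the identity $b-p=c+q$ yielding the common lift $\faf{b-p}{v}\in H_1\cap H_2$) is exactly right, so $\ker\phi=(H_1\cap H_2)\prio$ --- which, as a byproduct, re-proves \Cref{lem: intersection inclusions}(ii). Since subgroups of the finitely generated abelian group $\ZZ^m/(L_1+L_2)$ are finitely generated, the standard extension argument finishes the proof. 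What your route buys: it is shorter, bypasses the Droms/graph machinery entirely, and proves strictly more --- the quotient $(H_1\prio\cap H_2\prio)/(H_1\cap H_2)\prio$ is finitely generated \emph{abelian}, and the statement holds for arbitrary subgroups of any direct product $\ZZ^m\times G_0$ (indeed of any central extension of $G_0$ by $\ZZ^m$, since the discrepancy $b-c$ cancels any cocycle), with no Droms or PC hypothesis on $G_0$ whatsoever. What the paper's proof buys is mainly coherence with the structural viewpoint running through Section~\ref{sec: Droms direct product case}; as a proof of this particular lemma, yours is both cleaner and more general. One cosmetic nit: ``admits a single lift'' should read ``admits some lift'' --- uniqueness is false and not needed, since well-definedness of $\phi$ already lets you evaluate it on that lift.
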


\begin{proof}
Let us assume that $H_1\prio \cap H_2\prio$ is not finitely generated and
find a contradiction.

By~\Cref{rem: subgroups of Droms are Droms}, $H_1\prio \cap H_2\prio$ is again a PC-group with \emph{infinite} $\set{\Path[4],\Cycle[4]}$-free commutation graph, say $\Grii$, and $(H_1\cap H_2)\prio \leqslant \pcg{\Grii'}\leqslant \pcg{\Grii}=H_1\prio \cap H_2\prio$, where $\Grii'$ is the full subgraph of $\Grii$ determined by the vertices appearing in the reduced expressions of elements in $(H_1\cap H_2)\prio$. Note that the assumption of finite generability for $(H_1\cap H_2)\prio$ implies that $\Grii'$ is finite. Note also that, by construction, $\Grii'$ is minimal, \ie for any $x\in \Vertexs\Grii'$, there exists and element $g\in (H_1\cap H_2)\prio$ such that $g\not\in \pcg{\Grii'\setmin \{ x\}}$.

In this situation, $(H_1)\prio \cap (H_2)\prio$ cannot be abelian since, if so, we would have a non finitely generated free-abelian group embedded in the finitely generated PC-group~$\pcg{\Gri}$, which is not possible (see~\Cref{thm: PC-group properties}(vi)). So, $\Delta$ is not complete. Take two non-adjacent vertices, say $u,v$, from $\Delta$; since $\Free[2]\simeq \gen{u,v}\leqslant \pcg{\Grii}$, Lemma~\ref{lem: intersection inclusions}(iii) tells us that $\Free[\infty]\simeq [\gen{u,v}, \gen{u,v}]\leqslant [\pcg{\Grii}, \pcg{\Grii}]\leqslant (H_1 \cap H_2) \prio$ and thus, $(H_1 \cap H_2) \prio$ is not abelian either. Accordingly, neither the infinite graph $\Grii$, nor the finite graph $\Grii'$ are complete.

Suppose now there is a missing edge between some vertex $x\in \Vertexs\Grii'$ and some vertex $y\in \Vertexs\Grii\setmin \Vertexs\Grii'$. Take an element $g\in (H_1 \cap H_2)\prio$ with $g\not\in G(\Grii'\setmin \{x\})$ and Lemma~\ref{lem: intersection inclusions}(ii) would tell us that $y^{-1}g y\in \pcg{\Grii'}$, which is a contradiction.

Hence, in $\Grii$, every vertex from $\Grii'$ is connected to every vertex outside $\Grii'$. But now, take two non-adjacent vertices $\geni_1, \geni_2$ from $\Grii'$
and two non-adjacent vertices $\genii_1, \genii_2$ from $\Grii\setmin \Grii'$
(there must also be some since $\Grii\setmin \Grii'$ is infinite and  $\ZZ^{\infty}$ does not embed into $\pcg{\Gri}$).
Then, the full subgraph of $\Grii$ with vertex set $\{\geni_1, \geni_2, \genii_1, \genii_2\}$ form a copy of $\Cycle[4]$, a contradiction with $\Grii$ being Droms.
\end{proof}

\subsection{Proofs of~\Cref{thm: SIP-center,thm: ESIP-center}}


\begin{proof}[Proof of~\Cref{thm: SIP-center}.]
First of all, observe that we can restrict ourselves to the case where $G$ is a disconnected Droms group.

Therefore, we consider
${\Gri =\Kgraph[m] \join \Gri_0}$ (where $m\geqslant 1$ and $\Gri_0$ is a disconnected Droms graph), we will assume \SIP\ is solvable for $\pcg{\Gri_0}$, and we will prove it solvable for $\pcg{\Gri}=\ZZ^m \times \pcg{\Gri_0}$. Let $\Geni =\Vertexs\Gri_0 =\{x_1,\ldots, x_n\}$ and $T =\Vertexs \Kgraph[m] =\{ t_1,\ldots ,t_m\}$.

Given a finite set of generators for a subgroup, say $H_1\leqslant \pcg{\Gri}$, the first step is to improve them:
project them to $\pcg{\Gri_0}$, and then apply Proposition \ref{prop: computable bases for Droms groups} to compute a basis for $H_1\prio$, say $\{u_1, \ldots ,u_{n_1}\}$ with commutation graph $\pcg{\Grii_1}$. The respective completions, say
$\faf{a_1}{u_1}, \ldots ,\faf{a_{n_1}}{u_{n_1}}\in H_1$
can be computed from the words expressing the $u_i$'s in terms of the projected generators, and recomputing them on the original generators for $H_1$.

Now, for each of the original generators of $H_1$, say $t^c v$, we can write $v\in H_1\pi_0$ in terms of the basis $u_1,\ldots ,u_{n_1}$, say $v=v(u_1, \ldots ,u_{n_1})$ and compute
 $
v(\faf{a_1}{u_1}, \ldots ,\faf{a_{n_1}}{u_{n_1}})
\allowbreak
=
\allowbreak
\fa{d} \,v(u_1, \ldots ,u_{n_1})
\allowbreak
=
\allowbreak
\faf{d}{v} \,.
 $
Since $\faf{c}{v}, \faf{d}{v}\in H_1$, we get $\fa{c-d}\in H_1\cap \ZZ^m$. Repeating this operation for each generator of $H_1$, we get a generating set for $H_1\cap \mathbb{Z}^m$ which is easily reducible to a free-abelian basis, say $\{ \fa{b_1}, \ldots ,\fa{b_{m_1}}\}$.
In this way, we can compute bases for $H_1$ and $H_2$:
 \begin{equation}\label{eq: bases for H1 and H2}
\set{\,\fa{b_1},\ldots ,\fa{b_{m_1}}, \faf{a_1}{u_1}, \ldots , \faf{a_{n_1}}{u_{n_1}}\,}
\text{\, and \,}
\set{\,\fa{b'_1},\ldots ,\fa{b'_{m_2}},\faf{a'_1}{u'_1},\ldots ,\faf{a'_{n_2}}{u'_{n_2}}\,} \,,
 \end{equation}
where $\{ \mathrm{t}^{\mathbf{b_1}},\ldots ,\mathrm{t}^{\mathbf{b_{m_1}}} \}$ and $\{ \mathrm{t}^{\mathbf{b'_1}},\ldots ,\mathrm{t}^{\mathbf{b'_{m_2}}} \}$ are free-abelian bases of $L_1=H_1\cap \ZZ^m$ and $L_2=H_2\cap \ZZ^m$, respectively; and $\{ u_1, \ldots ,u_{n_1}\}$ and $\{ u'_1, \ldots ,u'_{n_2}\}$ are basis of $H_1\pi_0$ and $H_2\pi_0$, with commutation graphs $\Grii_1$, $\Grii_2$.
That is, $H_1\prio \isom \pcg{\Grii_1}$ and $H_2\prio \isom \pcg{\Grii_2}$.

Now, the solvability of \SIP\ in  $\pcg{\Gri_0}$ (assumed by hypothesis) allows us to decide whether $H_1\prio \cap H_2\prio$ is finitely generated or not. If not, then (by Lemma \ref{lem: infinitely gen -> infinitely gen}) neither is $(H_1\cap H_2)\prio$, and we are done.
Thus, 
we can assume that $H_1\prio \cap H_2\prio$ is finitely generated. Then,  the hypothesis provides a finite set of generators
and hence a basis --- say $W=\set{w_1, \ldots ,w_{n_3}}$ with commutation graph $\Grii_3$ --- for $H_1\prio \cap H_2\prio$.
That is,
$
H_1\prio \cap H_2\prio = \pcg{\Grii_3}
=
\gen{w_1, \ldots ,w_{n_3}}
\leqslant
\pcg{\Gri_0} \,,
$
where the $w_i$'s are words on $\Geni$.
After writing each $w_i\in W$ as a word on $U$ and $U'$ respectively --- say $w_i=\omega_i(u_1, \ldots ,u_{n_1})$ and $w_i=\omega'_i(u'_1, \ldots ,u'_{n_2})$ ---
we obtain a description of the inclusions $\moni_1\colon H_1\prio\cap H_2\prio \into H_1\prio$, and $\moni_2\colon H_1\prio\cap H_2\prio \into H_2\prio$ in terms of the corresponding bases.

Abelianizing $\moni_1$ and $\moni_2$, we get the integral matrices $\matr{P_{\!1}}$ (of size $n_3\times n_1$), and $\matr{P_{\!2}}$ (of size $n_3\times n_2$) and complete the upper half of \Cref{fig: esquema interseccio Droms}, where the $\abi_i$'s are the corresponding abelianization maps. Note that, even though $\moni_1$ and $\moni_2$ are injective, their abelianizations $ \matr{P_{\!1}}$ and $ \matr{P_{\!2}}$ need not be ($n_3$ could very well be bigger than $n_1$ or $n_2$).
 \begin{figure}[h]
 \centering
\begin{tikzcd}[
row sep=37pt, column sep=30pt,
ampersand replacement=\&
]
\&[-37pt]\&(\Sgpi_{1} \cap \Sgpi_{2})\prio\\[-41pt]
\&[-37pt]\&\rotatebox[origin=c]{270}{$\normaleq$}\\[-41pt]
\pcg{\Grii_1} \isom \&[-37pt] \Sgpi_{1} \prio  \arrow[d,two heads,"\abi_{1}"'] \arrow[dr,phantom,"\scriptstyle{///}" description] \& \Sgpi_{1} \prio \cap  \Sgpi_{2} \prio \arrow[d,two heads,"\abi_{3}"]  \arrow[l,tail,"\moni_{1}"'] \arrow[r,tail,"\moni_{2}"] \arrow[dr,phantom,"\scriptstyle{///}" description]\& \Sgpi_{2} \prio \arrow[d,two heads,"\abi_{2}"] \&[-37pt] \isom \, \pcg{\Grii_2} \\
\&[-37pt]\ZZ^{n_1} \arrow[dr,"\matr{A_1}"'] \arrow[drr,phantom,pos=0.23,"\scriptstyle{///}"]\& \ZZ^{n_3} \arrow[l,"\matr{P_1}"'] \arrow[r,"\matr{P_2}"] \arrow[d,bend right=20,"\matr{R_1}"'] \arrow[d,bend left=20,"\matr{R_2}"] \& \ZZ^{n_2} \arrow[dl,"\matr{A_2}"] \arrow[dll,phantom,pos=0.23,"\scriptstyle{///}"]\\
\&[-37pt]{}\& \ZZ^{m}\&{}\\[-41pt]
\&[-37pt]\&\rotatebox[origin=c]{45}{$\leqslant$}  \ \ \rotatebox[origin=c]{135}{$\leqslant$}\\[-43pt]
\&[-37pt]\&L_1 \ \quad \ L_2
\end{tikzcd}
\caption{Intersection diagram for subgroups of Droms groups}\label{fig: esquema interseccio Droms}
 \end{figure}

Now, we can recompute the words $\omega_i$ (\resp $\omega'_i$) as words on the $(\faf{a_i}{u_i})$'s (\resp on the~$(\faf{a'_i}{u'_i})$'s) to get particular preimages of the $w_i$'s in $H_1$ (\resp $H_2$). Namely,
\begin{align*}
&\omega_i (\faf{a_1}{u_1}, \ldots ,\faf{a_{n_1}}{u_{n_1}})
= \mathrm{t}^{\bm{\omega_i} \matr{A_1}}\, {\omega_i(u_1, \ldots ,u_{n_1})}
= \mathrm{t}^{\bm{\omega_i} \matr{A_1}}\,{w_i}
\in H_1 \, ,\\
&\omega'_i (\faf{\mathbf{a'_1}}{u'_1}, \ldots ,\faf{a'_{n_2}}{u'_{n_2}})
= \mathrm{t}^{\bm{\omega'_i} \matr{A_2}}\, {\omega'_i(u'_1, \ldots ,u'_{n_2})}
= \mathrm{t}^{\bm{\omega'_i} \matr{A_2}}\,{w_i}
\in H_2 \, ,
\end{align*}
where $\boldsymbol{\omega_i} = (\omega_i)\ab$, $\boldsymbol{\omega'_i} = (\omega'_i)\ab$; and
$\matr{A_1},\matr{A_2}$
are the integral matrices
(of sizes $n_1\times m$ and $n_2\times m$)
having as rows $\set{\vect{a_1},\ldots,\vect{a_{n_1}}}$ and $\set{\vect{a'_1},\ldots,\vect{a'_{n_2}}}$ respectively.
Hence, the abelian completions of $w_i\in H_1\prio \cap H_2\prio$ in $H_1$ and $H_2$ are the linear varieties:
\begin{align*}
&\cab{w_i}{H_1} = \boldsymbol{\omega_i}\matr{A_1} +L_1 = w_i\moni_1\abi_1 \matr{A_1}+L_1 = w_i \abi_3 \matr{R_1}+L_1 \,,\\ &\cab{w_i}{H_2} = \boldsymbol{\omega'_i}\matr{A_2} +L _2 = w_i\moni_2\abi_2 \matr{A_2}+L_2 = w_i \abi_3 \matr{R_2}+L_2\,,
\end{align*}
where $L_j = \ZZ^m \cap H_j$, and we have used the commutation $\moni_j \abi_j = \abi_3 \matr{P_j}$ together with the definition $\matr{R_j} \coloneqq \matr{P_j} \matr{A_j}$, for $j=1,2$; see \Cref{fig: esquema interseccio Droms}. Note that all maps and matrices involved in \Cref{fig: esquema interseccio Droms} are explicitly computable from the data.

To finish our argument, it suffices to understand which elements of $H_1\prio \cap H_2\prio$ belong to $(H_1\cap H_2)\prio$. They are, precisely, those whose $H_1$-completion and $H_2$-completion intersect:
 \begin{align}
(H_1\cap H_2)\prio
&=\set{\, w\in H_1\prio \cap H_2\prio \mid (w\abi_3  \matr{P_1}\matr{A_1} + L_1) \cap (w\abi_3 \matr{P_2}\matr{A_2}+L_2) \neq \varnothing \,} \notag \\[2pt]
&=\left(\set{ \vect{d}\in \ZZ^{n_3} \mid (\mathbf{dR_1}+L_1)\cap (\mathbf{dR_2}+L_2)\neq \varnothing }\right)\abi_3^{-1} \label{int}\\[2pt]
&=(\set{ \vect{d}\in \ZZ^{n_3} \mid \vect{d} (\matr{R_1}-\matr{R_2}) \in L_1 + L_2 })\abi_3^{-1} \notag\\[2pt]
&=(L_1+L_2)(\mathbf{R_1-R_2})^{-1} \abi_3^{-1}
= M\abi_3^{-1} \, , \notag
 \end{align}
where $M \coloneqq (L_1+L_2)(\matr{R_1}-\matr{R_2})^{-1}$ denotes the full preimage of $L_1+L_2$ by the matrix $\matr{R_1}-\matr{R_2}$ for which a basis is clearly computable using linear algebra. At this point, we can decide whether $(H_1 \cap H_2)\prio$ is finitely generated or not by distinguishing two cases.

If $\Grii_3$ is complete (this includes the case where $\Grii_3$ is empty and $n_3=0$), then~$H_1\prio \cap H_2 \prio \isom \ZZ^{n_3}$ is abelian, $\abi_3$ is the identity, and $(H_1\cap H_2)\prio =(L_1+L_2)(\matr{R_1}-\matr{R_2})^{-1} = M$ is always finitely generated and computable.

So, assume $\Grii_3$ is not complete. Since it is a Droms graph, it will have a primary decomposition, say $\Grii_3 =\Kgraph[n_4] \join \Grii_5$, where $n_4\geqslant 0$, and $\Grii_5$ is Droms again, disconnected, and with $|\Vertexs\Grii_5|= n_5 = n_3 -n_4 \geqslant 2$. Let us rename the vertices $\{ w_1, \ldots ,w_{n_3} \}$ of $\Grii_3$ as $\Vertexs\Kgraph[n_4] \eqqcolon \set{z_1, \ldots ,z_{n_4}}$, and $\Vertexs\Grii_5 \eqqcolon \{y_1, \ldots ,y_{n_5}\}$, depending on whether they belong to $\Kgraph[n_4]$ or $\Grii_5$. This means that $H_1\prio \,\cap\, H_2\prio \isom \pcg{\Grii_3} = \ZZ^{n_4}\times \,\pcg{\Grii_5}$, where $n_4\geqslant 0$, and $\pcg{\Grii_5} \neq 1$ decomposes as a nontrivial free product. Furthermore, the normal subgroup $(H_1\cap H_2)\prio \normaleq \pcg{\Grii_3}$ is \emph{not} contained in $\ZZ^{n_4}$ (taking two vertices, say $y_i, y_j$, in different components of $\Grii_5$,  \Cref{lem: intersection inclusions}(iii) tells us that $1\neq [y_i,y_j]\in (H_1\cap H_2)\prio$).
In this situation, the abelianization map $\abi_3 \colon \pcg{\Grii_3} \onto \ZZ^{n_3}$ is the identity on the center $\ZZ^{n_4}$ of $\pcg{\Grii_3}$ and so, can be decomposed in the form
$\abi_3
=
\operatorname{id}\times \,\abi_5 \colon \pcg{\Grii_3}
=
\ZZ^{n_4}\times \pcg{\Grii_5}  \onto  \ZZ^{n_4}\times \ZZ^{n_5}
=
\ZZ^{n_3}$,
$(\vect{c},v) \mapsto  (\vect{c},\vect{v})$.
where $\vect{v}$ denotes the abelianization $\vect{v} = v\ab \in \ZZ^{n_5}$.
Of course, if $n_4=0$ then~$\abi_5=\abi_3$.

Now consider the image of $(H_1\cap H_2)\prio$ under the projection $\prii\colon \ZZ^{n_4}\times \pcg{\Grii_5} \onto \pcg{\Grii_5}$, which is nontrivial since $(H_1\cap H_2)\prio \not\leqslant \ZZ^{n_4}$. We have $1\neq (H_1\cap H_2)\prio\prii \normaleq \pcg{\Grii_5}$, a nontrivial normal subgroup in a group which decomposes as a nontrivial free product. Therefore,
  \begin{equation}\label{eq: Droms intersection fg conditions}
\begin{aligned}
H_1\cap H_2 \text{ is f.g.}
& \,\Biimp\,  (H_1\cap H_2)\prio\prii \text{ is f.g.}
 \,\Biimp\,  (H_1\cap H_2)\prio\prii \normaleq\fin \pcg{\Grii_5} \\[3pt]
 &\,\Biimp\,  M\abi_3^{-1} \prii \normaleq\fin \pcg{\Grii_5}
 \,\Biimp\,  M{\prii\!\!} \ab \abi_5^{-1} \normaleq\fin \pcg{\Grii_5} \\[3pt]
 &\,\Biimp\,  M{\prii\!\!} \ab \normaleq\fin \ZZ^{n_5}
 \,\Biimp\,  \rk \left(M{\prii\!\!} \ab \right) = n_5 \, .
\end{aligned}
 \end{equation}
The first of these equivalences is a (double) application of Corollary~\ref{cor: H fg <-> H pi fg}. The second one is an application of
the following theorem in \cite[Section~6]{baumslag_intersections_1966} by B.\,Baumslag:
\emph{
Let $G$ be the free product of two nontrivial groups. Let H be a finitely generated subgroup containing a nontrivial normal subgroup of $G$. Then $H$ is of finite index in $G$.}
The fourth equivalence is correct since $\prii\abi_5 =\abi_3 {\prii\!\!} \ab$ and all of them are surjective maps. Finally, the fifth equivalence is correct because following backwards the epimorphism $\abi_5$, a subgroup $M{\prii\!} \ab\leqslant \ZZ^{n_5}$ is of finite index if and only if its full preimage $M{\prii\!} \ab\abi_5^{-1}$ is of finite index in $\pcg{\Grii_5}$, in which case the two indices do coincide, namely,~$\ind{\ZZ^{n_5}}{M{\prii\!\!} \ab}=\ind{\pcg{\Grii_5}}{ M{\prii\!\!} \ab\abi_5^{-1}}$; see Figure~\ref{fig: prii and priiab}.
 \begin{figure}[h]
 \centering
\begin{tikzcd}[row sep=28pt, column sep=28pt, ampersand replacement=\& ]
\hspace{13pt}(H_1 \cap H_2)\prio \,\isom\, M \abi_3^{-1} \, \leqslant \&[-37pt] \pcg{\Grii_3} \arrow[dr,phantom,"\scriptstyle{///}" description] \arrow[r,"\prii"] \arrow[d,two heads,"\rho_3"']\& \pcg{\Grii_5} \arrow[d,two heads,"\rho_5"] \&[-37pt] \geqslant \, M \prii\!\ab \abi_5^{-1} \hspace{30pt}\\
(L_1 + L_2)(\matr{R_1}-\matr{R_2})^{-1} = M \, \leqslant \  \&[-31pt] \ZZ^{n_3} \arrow[r,"\prii\!\ab"]\& \ZZ^{n_5} \&\geqslant \, M \prii\!\ab \hspace{40pt}
\end{tikzcd}
\caption{The map $\prii$ and its abelianization}
\label{fig: prii and priiab}
\end{figure}

Since the map ${\prii\!\!} \ab$ is computable, the last condition in~\eqref{eq: Droms intersection fg conditions} can be effectively checked. Hence, we can algorithmically decide whether $H_1\cap H_2$ is finitely generated or not (ultimately, in terms of some integral matrix having the correct rank). This solves the decision part of $\SIP$.

It only remains to compute a finite set of generators for $H_1\cap H_2$ assuming it is finitely generated, \ie assuming the equivalent conditions in~\eqref{eq: Droms intersection fg conditions} are satisfied.

We first use linear algebra to compute a finite family $C$ of coset representatives of~$\ZZ^{n_5}$ modulo $M{\prii\!\!} \ab$. Then, choose arbitrary $\abi_5$-preimages in $\pcg{\Grii_5}$, say $\{ v_1,\ldots ,v_r\}$, where
$r=\ind{\pcg{\Grii_5}}{M{\prii\!\!} \ab\abi_5^{-1}}=\ind{\ZZ^{n_5}}{ M{\prii\!\!} \ab}$ (we can take, for example, $y_1^{a_1}\cdots y_{n_5}^{a_{n_5}}\in \pcg{\Grii_5}$ for each vector $\vect{a}=(a_1, \ldots ,a_{n_5})\in \ZZ^{n_5}$). Now, construct the Schreier graph of the subgroup
$
(H_1\cap H_2)\prio \prii
=
\allowbreak
M\abi_3^{-1}\prii
=
\allowbreak
M{\prii\!\!} \ab\abi_5^{-1}
\,\leqslant\fin\, \pcg{\Grii_5}
$ 
with respect to $\Vertexs\Grii_5 =\{y_1, \ldots ,y_{n_5}\}$, in the following way: draw as vertices the cosets $\coset{v_1}, \ldots ,\coset{v_r}$; then, for every $\coset{v_i}$ ($i=1,\ldots, r$), and every $y_j$ ($j=1,\ldots ,n_5$), draw an edge labelled $y_j$ from $\coset{v_i}$ to $\coset{v_iy_j}$. Here, we need to algorithmically recognize which is the coset $\coset{v_iy_j}$ from our list of vertices, but this is easy since:
$
\coset{v_iy_j}=\coset{v_k} \,\Biimp\, v_iy_jv_k^{-1}\in M{\prii\!\!} \ab\abi_5^{-1}
\Biimp (v_iy_jv_k^{-1})\abi_5 \in M{\prii\!\!} \ab
$. 

From the Schreier graph of $(H_1\cap H_2)\prio \prii \leqslant\fin \pcg{\Grii_5}$, we can obtain a finite set of generators for $(H_1\cap H_2)\prio \prii$ just reading the labels of the closed  paths $\ATreei[\edgi]$ corresponding to the arcs, $\edgi$, outside a chosen maximal tree $\ATreei$. These will be words on $\Vertexs\Grii_5 =\{y_1, \ldots ,y_{n_5}\}$, \ie elements of $\pcg{\Grii_3}$ not using the central vertices $\{z_1,\ldots, z_{n_4}\}$.

The next step is to lift the obtained generators
to generators of ${(H_1\cap H_2)\prio}$, pulling them back through $\prii$. For each one of them, say~$g(y_1, \ldots, y_{n_5})$, we look for its preimages in $(H_1\cap H_2)\prio$; they all are of the form
$
z_1^{\lambda_1}\cdots z_{n_4}^{\lambda_{n_4}}g(y_1, \ldots ,y_{n_5})\,,
$ 
where the unknowns $\lambda_1, \ldots ,\lambda_{n_4}\in \ZZ$ can be found by solving the system of linear equations coming from the fact $z_1^{\lambda_1}\cdots z_{n_4}^{\lambda_{n_4}}g(y_1, \ldots ,y_{n_5}) \in M\abi_3^{-1}$. That is,
$
(\lambda_1, \ldots, \lambda_{n_4}, |g|_1, \ldots ,|g|_{n_5})(\mathbf{R_1-R_2}) \in L_1+L_2
$. 

For each such $g(y_1, \ldots ,y_{n_5})$, we compute a particular preimage
of the previous form
and put them all, together with a free-abelian basis for
 \begin{equation*}
\ker \prii \cap (H_1\cap H_2)\prio = \Set{ z_1^{\lambda_1}\cdots z_{n_4}^{\lambda_{n_4}} \st (\lambda_1, \ldots ,\lambda_{n_4}, 0,\ldots ,0)(\mathbf{R_1-R_2}) \in L_1+L_2 },
 \end{equation*}
to constitute a set of generators for $(H_1\cap H_2)\prio$.

Finally, we have to lift these generators for $(H_1\cap H_2)\prio$, to a set of generators for $H_1\cap H_2$: for each such generator, say $h_j
$, write it as a word $h_j =\omega_j(u_1, \ldots, u_{n_1})$ and as a word $h_j =\omega'_j(u'_1, \ldots, u'_{n_2})$ in the original bases $U$ for $H_1\prio$ and $U'$ for $H_2\prio$, respectively. Now, reevaluate each $\omega_j$ and $\omega'_j$ in the corresponding basis elements from~\eqref{eq: bases for H1 and H2} for $H_1$ and $H_2$ respectively, to obtain vectors $\vect{c_j},\vect{c'_j} \in \ZZ^m$ such that:
\begin{align*}
&\omega_j( \faf{a_1}{u_1}, \ldots , \faf{a_{n_1}}{u_{n_1}}) = \faf{c_j}{\omega_j(u_1, \ldots, u_{n_1})}=\faf{c_j} h_j \in H_1 \,,\\ &\omega'_j( \faf{a'_1}{u'_1}, \ldots , \faf{a'_{n_2}}{u'_{n_2}}) = \faf{c'_j}{\omega'_j(u'_1, \ldots, u'_{n_2})}= \faf{c'_j} h_j\in H_2 \,.
\end{align*}

Finally, for each $j$, compute a vector $\vect{d_j}\in (\vect{c_j}+L_1)\cap (\vect{c'_j}+L_2)$ (note that these intersections of linear varieties must be nonempty because $h_j\in (H_1\cap H_2)\prio$), and consider the element $\mathrm{t}^{\vect{d_j}}h_j\in H_1\cap H_2$. All these elements $\faf{d_j}{h_j}$, together with a free-abelian basis for~$H_1\cap H_2\cap \ZZ^m =(H_1\cap \ZZ^m )\cap (H_2\cap \ZZ^m )=L_1\cap L_2$ constitute the desired set of generators for $H_1\cap H_2$, and the proof is completed.
\end{proof}



Below, we extend the previous arguments to prove~\Cref{thm: ESIP-center}.

\begin{proof}[Proof of~\Cref{thm: ESIP-center}]
By exactly the same argument as before, we can reduce to a non-complete connected Droms graph $\Gri$ with primary decomposition ${\Gri =\Kgraph[m] \join \Gri_0}$ (where $m\geqslant 1$ and $\Gri_0$ is a disconnected Droms graph), we assume \ESIP\ to be  solvable for $\pcg{\Gri_0}$, and we have to solve it for $\pcg{\Gri}=\ZZ^m \times \pcg{\Gri_0}$.

We are given finite sets of generators for two subgroups $H_1,H_2\leqslant \pcg{\Gri}$, and two extra elements $\faf{a}{u},\, \faf{a'}{u'} \in \pcg{\Gri}$. Since the solvability of \ESIP\ implies that of \SIP, we can apply~\Cref{thm: SIP-center} to effectively decide whether $H_1\cap H_2$ is finitely generated or not, and in the affirmative case compute a basis for $H_1\cap H_2$. We assume all the notation developed along the proof of~\Cref{thm: SIP-center}.

Now, if $H_1\cap H_2$ is not finitely generated there is nothing else to do; otherwise, we can compute a basis, say $\set{v_1,\ldots,v_p}$, for $H_1\cap H_2$ and we have to decide whether the coset intersection $(\faf{a}{u})H_1 \cap (\faf{a'}{u'})H_2$ is empty or not. Note that
$
(\faf{a}{u})H_1 \cap (\faf{a'}{\!u'})H_2 =
\varnothing
$
if and only if
$
((\faf{a}{u})H_1 \cap (\faf{a'}{\!u'})H_2 )\prio
=
\varnothing
$, 
and that
$ 
((\faf{a}{u})H_1 \cap (\faf{a'}{u'})H_2 )\prio \subseteq
\allowbreak
((\faf{a}{u})H_1)\prio \cap ((\faf{a'}{u'})H_2)\prio
=
\allowbreak
u(H_1\prio) \cap u'(H_2\prio)
$. 
Then, since $H_1\cap H_2$ is finitely generated, we know from \Cref{lem: infinitely gen -> infinitely gen} that $H_1\prio\cap H_2\prio$ is finitely generated as well. Hence, an application of the \ESIP\ solvability hypothesis for $\pcg{\Gri_0}$, tells us whether the coset intersection~$u(H_1\prio) \cap u'(H_2\prio)$ is empty or not. If it is empty, then $((\faf{a}{u})H_1 \cap (\mathrm{t}^{\mathbf{a'}}u')H_2)\prio$ is empty as well, and we are done.

Otherwise, $u(H_1\prio) \cap u'(H_2\prio)\neq \varnothing$, and the hypothesis gives us an element $v_0\in u(H_1\prio) \cap u'(H_2\prio)$ as a word on $\Vertexs\Gri_0 =\{ x_1, \ldots ,x_n\}$; further, $u(H_1\prio) \cap u'(H_2\prio)=v_0(H_1\prio \cap H_2\prio)$.

Observe that $((\faf{a}{u})H_1 \cap (\faf{a'}{\!u'})H_2)\prio$ consists precisely of those elements $v_0w$, with $w\in H_1\prio \cap H_2\prio$, for which there exists a vector $\vect{c}\in \ZZ^m$ such that $\faf{c}{v_0 w} \in (\faf{a}{u})H_1 \cap (\faf{a'}{\!u'})H_2 \,;$ that is, such that $\faf{c-a}{u^{-1} v_0 w}\in H_1$, and
$\faf{c-a'}{(u')^{-1} v_0 w}\in H_2$. That is, $\vect{c}-\vect{a}\in \cab{u^{-1} v_0 w}{H_1}$, and $\vect{c}-\vect{a'} \in \cab{(u')^{-1} v_0 w}{H_2}$. Hence, $((\faf{a}{u})H_1 \cap (\mathrm{t}^{\mathbf{a'}} u')H_2) \prio = \varnothing$ if and only if, for all $w\in H_1\prio \cap H_2\prio$,
 $
\left(\vect{a}+{\mathcal C}_{H_1}(u^{-1}v_0w)\right) \allowbreak\cap \left(\mathbf{a'}+{\mathcal C}_{H_2}((u')^{-1}v_0 w)\right) = \varnothing.
 $

Fix an arbitrary word $w=\omega (w_1, \ldots ,w_{n_3})\in H_1\prio \cap H_2\prio$ (for its abelianization, write $|\omega|_i =\lambda_i$, for $i=1,\ldots ,n_3$). Choose vectors $\vect{c}\in \cab{u^{-1} v_0} {H_1}$, $\vect{c'}\in \cab{(u')^{-1} v_0}{H_2}$, and $\vect{d_i}\in \cab{w_i}{H_1}$, $\vect{d'_i}\in \cab{w_i}{H_2}$, for $i=1,\ldots ,n_3$. By Lemma~\ref{completion}:
 \begin{equation*}
\begin{split}
\left(\vect{a} + \cab{u^{-1} v_0 w}{H_1} \right) \,\cap\,& \left( \vect{a'} + \cab{(u')^{-1}v_0w}{H_2} \right) = \\[3pt] &= \left(\vect{a} + \vect{c}+{\mathcal C}_{H_1}(w)\right) \,\cap\, \left(\vect{a'} + \vect{c'}+{\mathcal C}_{H_2}(w)\right)\\[3pt] &= \left(\vect{a} + \vect{c}+\sum_{i=1}^{n_3} \lambda_i {\mathcal C}_{H_1}(w_i)\right) \,\cap\, \left(\vect{a'} + \vect{c'}+\sum_{i=1}^{n_3}\lambda_i {\mathcal C}_{H_2}(w_i)\right) \\[3pt]
&= \left(\vect{a} + \vect{c}+\sum_{i=1}^{n_3}\lambda_i \mathbf{d_i}+L_1\right) \,\cap\, \left(\vect{a'} + \vect{c'}+\sum_{i=1}^{n_3}\lambda_i \mathbf{d'_i}+L_2\right) \,.
\end{split}
 \end{equation*}

Hence, the coset intersection $\left((\faf{a}{u})H_1 \cap (\mathrm{t}^{\vect{a'}}u')H_2\right)\prio$ is empty if and only if for every integer $\lambda_1, \ldots, \lambda_{n_3}\in \ZZ$, $\left(\vect{a-a'+c-c'}\right) \,+\, \sum_{i=1}^{n_3} \lambda_i (\vect{d_i-d'_i}) \notin L_1+L_2$; or equivalently, if and only if
\begin{equation*}
\left( \left( \vect{a-a'+c-c'} \right) \allowbreak+\Gen{\vect{d_1-d'_1},\ldots ,\vect{d_{n_3}-d'_{n_3}}}\right) \cap (L_1+L_2) = \varnothing \,.
\end{equation*}

This can be effectively decided using linear algebra. And furthermore, in case it is not empty, one can compute an explicit element from $(\faf{a}{u})H_1 \cap (\faf{a'}{u'})H_2$, just following the computations back. This completes the proof.
\end{proof}


\section{The free product case} \label{sec: Droms free product case}

In this section, we shall consider the free product case and prove~\Cref{thm: TIP-free,thm: ESIP-free}. We follow the graph-theoretical approach developed by S. Ivanov in~\cite{ivanov_intersection_1999}.

The classical theory of Stallings foldings (see~\cite{stallings_topology_1983}) provides a bijection between the subgroups of a given free group and certain kind of labelled directed graphs (the so-called \emph{Stallings automata}) which when restricted to finitely generated subgroups (corresponding to finite automata) is fully constructive. This geometric approach allowed to solve many algorithmic problems about free groups in a very nice and intuitive way.

S.\,Ivanov (in~\cite{ivanov_intersection_1999}) generalized this machinery to free products. Among other applications, this allowed him to give a modern proof of the Kurosh Subgroup Theorem, and of B.\,Baumslag theorem stating that free products of Howson groups are again Howson
(see~\cite{ivanov_intersection_1999,ivanov_intersecting_2001,ivanov_kurosh_2008}). However, he did not consider algorithmic issues in his approach.

More recently, Kapovich--Weidmann--Myasnikov extended further these folding techniques (see~\cite{kapovich_foldings_2005}) to fundamental groups of graphs of groups. Their method produces the automaton corresponding to a given subgroup, under some conditions on the edge groups (which automatically hold in the case of free products), and leads to the solution of the membership problem in some cases. However, they do not analyze subgroup intersections.

In order to treat algorithmically intersections of subgroups of free groups, we use Ivanov's approach with the necessary technical adaptations to make it fully algorithmic. In the sake of clarity,
we offer here a self-contained exposition.

The idea is to use generalized folding techniques to algorithmically represent any finitely generated subgroup $H\leqslant \Gpfi *\Gpfii$ by a finite graph of certain kind, called a \emph{reduced wedge automaton}, denoted by~$\Ati_{\!H}$. We note that  from such an object $\Ati_{\!H}$ one can already deduce, algorithmically, a Kurosh decomposition for the subgroup $H$.

Note also that following Ivanov's argument it is possible to define $\Ati_{\!H}$ for arbitrary subgroups $H\leqslant \Gpfi *\Gpfii$.
On the other hand, Ivanov gives in~\cite{ivanov_intersection_1999} a generalization of the classical ``pullback'' technique for free groups: given two subgroups $H_1, H_2\leqslant \Gpfi *\Gpfii$, and having at hand corresponding reduced wedge automata $\Ati_{\!H_1}$ and $\Ati_{\!H_2}$, he describes a reduced wedge automaton  ${\junction{H_1}{H_2}}$ for $H_1\cap H_2$, in terms of $\Ati_{\!H_1}$ and $\Ati_{\!H_2}$. Note a substantial difference with the free situation: ${\junction{H_1}{H_2}}$ may very well be an infinite object even with $\Ati_{\!H_1}$ and $\Ati_{\!H_2}$ being finite (corresponding to the possible non-Howson situation; \ie~$H_1\cap H_2$ may very well be non finitely generated, even with $H_1$ and $H_2$ being finitely generated).

The main argument in the present section is the following: given finite generating sets for $H_1$ and $H_2$, we are able to construct $\Ati_{\!H_1}$ and $\Ati_{\!H_2}$, and then start constructing ${\junction{H_1}{H_2}}$ (even with the possibility of this being infinite). The crucial point is that, in finite time while the construction is running, we are able to either detect that $H_1\cap H_2$ is not finitely generated, or to complete the construction of ${\junction{H_1}{H_2}}$; in the first case we have algorithmically deduced that the intersection $H_1\cap H_2$ is not finitely generated, and in the second case we have effectively constructed ${\junction{H_1}{H_2}}$, from which we shall be able to extract a finite set of generators for $H_1\cap H_2$. In order to check whether $H_1\cap H_2$ is not finitely generated the hypothesis of \SIP\ or \ESIP\ in the factor groups $\Gpfi$ and $\Gpfii$ will be crucial.

\subsection{Wedge automata}

We assume the reader familiar with standard Stallings automata (representing subgroups of free groups, say $\Free[2]=\langle a\rangle * \langle b\rangle$, as involutive $\set{a,b}$-automata recognizing exactly the elements in the corresponding subgroup).


To cover the more general situation of $\Gpfi*\Gpfii$, we need to encode more information into the arcs. A classical $a$-labelled arc would correspond to what we call here a $G_1$-wedge: an arc subdivided in two halves, admitting a (possibly trivial) label from $\Gpfi$ on each side, and also admitting a (possibly trivial) subgroup $A \leqslant \Gpfi$ as a label of the middle (special) vertex between the two halves. Doing the same with the $b$-arcs (and subgroups of $\Gpfii$) we get an automaton with two types of vertices, \emph{primary} (the original ones), and \emph{secondary} (the new ones). See~\Cref{fig: wedges}.

\ffigure{
\begin{tikzpicture}[shorten >=1pt, node distance=1 and 1.3, on grid,auto,>=stealth']
\node[state] (p0) {};
\node[sstate1] (q1) [above right = of p0]{$\scriptstyle{A}$};
\node[] (l1) [below = 1.5 of q1] {$\scriptstyle{(g_1,g'_1\in G_1, \ A \leqslant G_1)}$};
\node[state] (p1) [below right = of q1]{};
\node[] (q3) [right = of p1]{};
\node[state] (p4) [right = of q3]{};
\node[sstate2] (q5) [above right = of p4]{$\scriptstyle{B}$};
\node[] (l2) [below = 1.5 of q5] {$\scriptstyle{(g_2,g'_2\in G_2, \ B \leqslant G_2)}$};
\node[state] (p5) [below right = of q5]{};
\path[->,red](p0) edge[] node[above left = -0.05,swap] {\scriptsize{$\gpi_1$}}(q1);
\path[->,red](q1) edge[] node[above right= -0.03] {\scriptsize{$\gpi'_1$}}(p1);
\path[->,blue](p4) edge[] node[above left= -0.05,swap] {\scriptsize{$\gpi_2$}}(q5);
\path[->,blue](q5) edge[] node[above right= -0.03] {\scriptsize{$\gpi'_2$}}(p5);
\end{tikzpicture}
}{Wedges of first and second kind} {fig: wedges}

In these new automata, \walk s are going to spell \emph{subsets} of $\Gpfi*\Gpfii$ (instead of \emph{words} in $\set{a,b}^{\pm}$), by picking \emph{all} the elements from the label of a secondary vertex when traversing it. Allowing, in addition, vertices to have any degree, we get the new notion of wedge automaton.

\begin{defn} \label{def: wedge automaton}
Let $\Gpfi, \Gpfii$ be two arbitrary groups. A \defin{(wedge) $(G_1,G_2)$-automaton} is a septuple $\Ati =(\Vertexs\Ati, \Edgi \Ati, \start, \term, \labl,^{-1}, \bp)$, where:
\begin{enumerate}[ind]
\item $\Gri =(\Vertexs\Ati, \Edgi\Ati, \start, \term,{}^{-1})$ is an involutive digraph (called \defin{underlying digraph} of $\Ati$) with three disjoint types of vertices, $\Vertexs\Ati =\Vertexs_{0}\Ati \sqcup \Vertexs_1\Ati \sqcup \Vertexs_2\Ati$; namely, \defin{primary} (those in $\Vertexs_0 \Ati$, denoted by~\pv), \defin{1-secondary} (those in $\Vertexs_1 \Ati$, denoted by~$\svi$, and \emph{$2$-secondary} (those in $\Vertexs_2 \Ati$, denoted by~$\svii$); and with all arcs in $\Edgi \Ati$ joining (in either direction) a primary vertex with a secondary one;
\ie $\Edgi\Ati =\Edgi_1\Ati \sqcup \Edgi_2\Ati$, where the arcs in $\Edgi_j$ are called \defin{$j$-arcs} and connect primary vertices with $j$-secondary vertices (for $j=1,2$).
\item $\labl$ is a twofold \defin{label map}: for $\nu=1,2$,
 $\labl\colon \Edgi_{\nu}\Ati \to G_{\nu}$, $\edgi \mapsto \labl_\edgi$ compatible with the involution in $\Gri$; and
 $\labl\colon \Vertexs_{\nu}\Ati \to \textsf{Sgp}(G_{\nu})$, $\vertii \mapsto \labl_\vertii$.

\item $\bp$ is a distinguished primary vertex called the \emph{basepoint} of $\Ati$.
\end{enumerate}
\end{defn}

We say that a wedge automaton $\Ati$ is \defin{connected} (\resp \defin{finite}) if the underlying undirected graph is so; note that, by definition, it is always primary-secondary bipartite. We will also say that a wedge automaton $\Ati$ is of \defin{finite type} if the underlying digraph is finite, and the subgroups labelling the secondary vertices are all finitely generated. This will always be the situation when we consider computational issues; in this case, the labels of vertices will usually be given by finite sets of generators. We say that a vertex label is \defin{trivial} if $\labl_\vertii= \Trivial$. If not stated otherwise all the wedge automata appearing from this point will be assumed to be finite.

Recall that
wedge automata are involutive (as automata): for every arc $\edgi \equiv \verti \arc{} \vertii$ reading $\gpi$, there exists a unique inverse arc $\edgi^{-1} \equiv \vertii \arc{} \verti$ reading $\gpi^{-1}$. Hence, a wedge automata $\Ati$ can always be represented by one of its (say positive, denoted by $\Edgi^{+}\Ati$) arc orientations. Then $\Edgi\Ati = \Edgi^{\pm}\Ati \coloneqq \Edgi^{+}\Ati \sqcup \Edgi^{-}\Ati$, where $\Edgi^{-}\Ati$ is the set of inverses of the arcs in~$\Edgi^{+}\Ati$.
A \defin{\walk} in $\Ati$ is a sequence of alternating and successively incident vertices and arcs, starting and ending at primary vertices, $\walki = \verti_0 (\edgi_1^{-1} \vertii_1 \edgi'_1) \verti_1 (\edgi_2^{-1} \vertii_2 \edgi'_2) \verti_2 \cdots \verti_{r-1} (\edgi_r^{-1} \vertii_r \edgi'_r) \verti_r$,  where $\verti_0, \ldots ,\verti_{r}$ are (not necessarily distinct) primary vertices, $\vertii_1, \ldots ,\vertii_r$ are (not necessarily distinct) secondary vertices, and for every $k=1,\ldots ,r$, all three of ${\edgi_k,\vertii_k,\edgi_k'}$ are simultaneously of the same type $\nu_k$ ($\nu_k=1,2$).
A \defin{$\nu$-elementary \walk} is a \walk\ of length $2$ visiting a secondary vertex of $\nu$-type, $\nu=1,2$; it is \defin{degenerate} if it consists of two mutually inverse arcs; otherwise it is called \emph{non-degenerate}. Every \walk\ $\walki$ decomposes as a product of elementary \walk s (either degenerate or nondegenerate, and with possible repetitions) in a unique way, corresponding to the brackets in the expression above: this is called the \defin{elementary decomposition} of $\walki$ (for convention, we take $r=0$ when the \walk\ $\walki$ is trivial). We say that a \walk\ $\walki$ is \defin{alternating} if its elementary decomposition sequence $\walki_1, \walki_2, \ldots ,\walki_r$ alternates between types $1$ and $2$.

The \defin{length} of a \walk\ is the number of arcs in the sequence defining it, \ie twice the number of elementary walks $r$ in its elementary decomposition.

\begin{rem} \label{rem: no backtracking}
Note that $\walki$ involves no backtracking if and only if the $\walki_i$'s in its elementary decomposition $\walki=\walki_1 \cdot \walki_2 \cdots \walki_r$ are all nondegenerate, and there is no backtracking in the consecutive products $\walki_i\cdot \walki_{i+1}$.
\end{rem}

\begin{defn}
The \defin{label} of a walk $\walki =\verti_0 (\edgi_1^{-1} \vertii_1 \edgi'_1) \verti_1 (\edgi_2^{-1} \vertii_2 \edgi'_2) \verti_2 \cdots \verti_{r-1} (\edgi_r^{-1} \vertii_r \edgi'_r) \verti_r$, denoted by $\labl_{\walki}$, is the subset
$\labl_{\walki}
=
(\labl_{\edgi_1}^{-1} \labl_{\vertii_1} \labl_{\edgi'_1})
(\labl_{\edgi_2}^{-1} \labl_{\vertii_2} \labl_{\edgi'_2})
 \cdots
(\labl_{\edgi_r}^{-1} \labl_{\vertii_r} \labl_{\edgi'_r})
\subseteq \Gpfi*\Gpfii$.
That is, while travelling along $\walki$, when we traverse an arc $\edgi$, we \emph{pick} its label $\labl_\edgi$, and when we traverse a secondary vertex~$\vertii$ we take all labels $c \in \labl_\vertii$ (primary vertices have no contribution to $\labl_{\walki}$). Picking always the trivial element when visiting a secondary vertex, we obtain the so-called \defin{basic label} of $\walki$, $\labl^{\bullet}_{\walki}= (\labl_{\edgi_1}^{-1} \labl_{\edgi'_1}) (\labl_{\edgi_2}^{-1} \labl_{\edgi'_2}) \cdots (\labl_{\edgi_r}^{-1} \labl_{\edgi'_r}) \in \labl_{\walki} \subseteq \Gpfi*\Gpfii$.
\end{defn}

It is clear that $\labl_{\alpha^{-1}} = \labl_{\alpha }^{-1}$ and $\labl_{\alpha}\cdot \labl_{\beta}= \labl_{\alpha\beta}$.

\begin{rem}\label{sylabes}
Note also that if
$\walki =\verti_0 (\edgi_1^{-1} \vertii_1 \edgi'_1) \verti_1
\allowbreak
(\edgi_2^{-1} \vertii_2 \edgi'_2) \verti_2
\allowbreak
\cdots \verti_{r-1} (\edgi_r^{-1} \vertii_r \edgi'_r) \verti_r$
is alternating, and we take elements $c_i \in \labl_{\vertii_i}$ such that $\labl_{\edgi_i}^{-1} c_i \labl_{\edgi'_i}\neq 1$ for all $i=1,\ldots ,r$, then the brackets in the expression $(\labl_{\edgi_1}^{-1} c_1 \labl_{\edgi'_1}) (\labl_{\edgi_2}^{-1} c_2 \labl_{\edgi'_2}) \cdots (\labl_{\edgi_r}^{-1} c_r \labl_{\edgi'_r})$ indicate, precisely, the syllable decomposition of the element in $\Gpfi*\Gpfii$ read by $\walki$; otherwise, some consecutive pairs of brackets may merge into the same syllable. This remark will be crucial later.
\end{rem}

Let $\Ati$ be a \wedged\ and let $\verti,\verti'$ be two primary vertices. We define the \defin{coset recognized} by~$\Ati$ relative to $(\verti,\verti')$ to be the set $\recg{\Ati}_{(\verti,\verti')} \coloneqq \cup _{\walki} \, \labl_{\walki}$, where the union runs over all \walk s in $\Ati$ from $\verti$ to $\verti'$. When $\verti=\verti'$, then we abbreviate $\recg{\Ati}_{\verti} :=\recg{\Ati}_{(\verti, \verti)}$. Moreover, if $\verti = \verti' = \bp$
then we simply write $\recg{\Ati} = \recg{\Ati}_{\sbp}$, and we call it the  \defin{subgroup recognized} by  $\Ati$. The lemma below, which is straightforward to prove, justifies this terminology.

\begin{lem}\label{lem: wedge automaton recognition}
Let $\Ati$ be a $(\Gpfi,\Gpfii)$-wedge automaton, and let $\verti,\verti'\in \Vertexs_{0}\,\Ati$. Then,
\begin{enumerate}[ind]
\item $\recg{\Ati}_{\verti}$ is a subgroup of $\Gpfi*\Gpfii$;
\item $\recg{\Ati}_{\verti}$ and $\recg{\Ati}_{\verti'}$ are conjugate to each other; viz.~$\recg{\Ati}_{\verti'} =(\recg{\Ati}_{\verti})^{g}$, for every $g\in \recg{\Ati}_{(\verti,\verti')}$;
\item $\recg{\Ati}_{(\verti,\verti')}$ is a right coset of $\recg{\Ati}_{(\verti,\verti)}$; viz.~$\recg{\Ati}_{(\verti,\verti')} =\recg{\Ati}_{(\verti,\verti)} \cdot g$, for every $g\in \recg{\Ati}_{(\verti, \verti')}$. \qed
\end{enumerate}
\end{lem}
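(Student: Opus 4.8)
The plan is to deduce all three statements directly from the two compatibility identities recorded just before the lemma, namely $\labl_{\alpha^{-1}} = \labl_{\alpha}^{-1}$ and $\labl_{\alpha}\cdot \labl_{\beta}= \labl_{\alpha\beta}$ (set-wise inverse and set-wise product in $\Gpfi*\Gpfii$), together with the evident closure of the collection of walks under concatenation (when endpoints match) and inversion. The only thing to keep in mind throughout is that labels are \emph{subsets} of $\Gpfi*\Gpfii$, so every manipulation must be read as an equality (or inclusion) of sets; once this bookkeeping is respected, the whole lemma is pure formula-pushing.

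First I would prove (i). The trivial walk at $\verti$ (the case $r=0$) has label $\set{1}$, so $1\in \recg{\Ati}_{\verti}$. If $g,h\in \recg{\Ati}_{\verti}$, I pick walks $\alpha,\beta$ from $\verti$ to $\verti$ with $g\in \labl_{\alpha}$ and $h\in\labl_{\beta}$; then $\alpha\beta$ is again a closed walk at $\verti$ and $gh\in \labl_{\alpha}\labl_{\beta}=\labl_{\alpha\beta}\subseteq \recg{\Ati}_{\verti}$. Likewise $\alpha^{-1}$ is closed at $\verti$ and $g^{-1}\in \labl_{\alpha}^{-1}=\labl_{\alpha^{-1}}\subseteq \recg{\Ati}_{\verti}$. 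Hence $\recg{\Ati}_{\verti}\leqslant \Gpfi*\Gpfii$.

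For (iii) I would fix $g\in \recg{\Ati}_{(\verti,\verti')}$, say $g\in \labl_{\gamma}$ with $\gamma$ a walk from $\verti$ to $\verti'$, and prove the two inclusions in $\recg{\Ati}_{(\verti,\verti')} = \recg{\Ati}_{(\verti,\verti)}\cdot g$. For $\subseteq$: given $h\in \labl_{\delta}$ with $\delta$ from $\verti$ to $\verti'$, the walk $\delta\gamma^{-1}$ is closed at $\verti$ and $hg^{-1}\in \labl_{\delta}\labl_{\gamma^{-1}}=\labl_{\delta\gamma^{-1}}\subseteq \recg{\Ati}_{(\verti,\verti)}$, whence $h\in \recg{\Ati}_{(\verti,\verti)}\,g$. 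For $\supseteq$: if $k\in\labl_{\varepsilon}$ with $\varepsilon$ closed at $\verti$, then $\varepsilon\gamma$ runs from $\verti$ to $\verti'$ and $kg\in \labl_{\varepsilon\gamma}\subseteq \recg{\Ati}_{(\verti,\verti')}$.

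Finally, (ii) follows the same template, conjugating by a walk $\gamma$ from $\verti$ to $\verti'$ with $g\in\labl_{\gamma}$: for $h\in\recg{\Ati}_{\verti'}$ realized by a walk $\delta$ closed at $\verti'$, the walk $\gamma\delta\gamma^{-1}$ is closed at $\verti$ and $ghg^{-1}\in\labl_{\gamma\delta\gamma^{-1}}\subseteq \recg{\Ati}_{\verti}$, giving $\recg{\Ati}_{\verti'}\subseteq (\recg{\Ati}_{\verti})^{g}$ in the convention $A^{g}=g^{-1}Ag$ already fixed in the paper; the reverse inclusion uses $\gamma^{-1}\varepsilon\gamma$ in exactly the same way. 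I do not expect any genuine obstacle here, which matches the remark that the lemma is straightforward: the entire content is the interplay of walk concatenation and inversion with the two label identities, and the single point demanding care is that all equalities are equalities of subsets of $\Gpfi*\Gpfii$, so one must never silently treat $\labl_{\walki}$ as a single element.
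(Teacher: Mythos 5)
Your proof is correct and is exactly the routine verification the paper has in mind: the paper omits the argument entirely (the lemma is stated with ``straightforward to prove'' and a \qed), and your use of the two label identities $\labl_{\alpha^{-1}}=\labl_{\alpha}^{-1}$, $\labl_{\alpha}\labl_{\beta}=\labl_{\alpha\beta}$ together with concatenation/inversion of walks, including the convention $A^{g}=g^{-1}Ag$ consistent with the paper's usage, fills it in faithfully. No gaps; the only implicit point (harmless, since the ``viz.''~clauses quantify over $g\in\recg{\Ati}_{(\verti,\verti')}$) is that parts (ii) and (iii) are vacuous when no walk joins $\verti$ to $\verti'$.
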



\begin{prop}\label{existence}
For every subgroup $H\leqslant \Gpfi*\Gpfii$, there exists a \wedged\ $\Ati$ recognizing $H$. Furthermore, if $H$ is finitely generated, one such $\Ati$ is of finite type and algorithmically constructible from a finite set of generators for $H$ given in normal form.
\end{prop}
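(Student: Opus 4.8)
The plan is to construct, directly from a generating set of $H$, a ``bouquet'' wedge automaton reading the generators, and then to identify its recognized subgroup with $H$; finiteness and algorithmicity in the finitely generated case will be immediate from the construction. First I would fix a generating set $S$ of $H$ (finite if $H$ is finitely generated, and $S=H$ otherwise), and write each $h\in S$ in normal form as an alternating syllable product $h=g_1g_2\cdots g_k$, with each $g_j$ a nontrivial element of $\Gpfi$ or $\Gpfii$ and consecutive syllables lying in distinct factors. To each such $h$ I attach, at a single common basepoint $\bp$, a closed \walk\ $\walki_h$ of length $2k$ reading $h$: I introduce fresh primary vertices $\verti_0=\bp,\verti_1,\ldots,\verti_{k-1},\verti_k=\bp$, and for each syllable $g_j\in G_{\nu_j}$ a fresh $\nu_j$-secondary vertex $\vertii_j$ with \emph{trivial} label $\labl_{\vertii_j}=\Trivial$, joined to $\verti_{j-1}$ and $\verti_j$ by a $\nu_j$-wedge whose half-labels are chosen (say trivial on the incoming half and $g_j$ on the outgoing half) so that the elementary \walk\ $\verti_{j-1}(\edgi^{-1}\vertii_j\edgi')\verti_j$ has basic label $\labl_{\edgi}^{-1}\labl_{\edgi'}=g_j$. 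The resulting (possibly infinite) wedge automaton $\Ati$, with basepoint $\bp$ and all secondary labels trivial, is the desired object.

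The recognition step is the heart of the argument. Since every secondary label is trivial, each \walk\ has a \emph{singleton} label equal to its basic label, and the assignment $\walki\mapsto\labl^{\bullet}_{\walki}$ is compatible with concatenation and inversion ($\labl_{\alpha\beta}=\labl_{\alpha}\labl_{\beta}$ and $\labl_{\alpha^{-1}}=\labl_{\alpha}^{-1}$, as already noted). Thus $\walki\mapsto\labl^{\bullet}_{\walki}$ induces a homomorphism from the fundamental group of the underlying (undirected) graph at $\bp$ into $\Gpfi*\Gpfii$, whose image is exactly $\recg{\Ati}$ by \Cref{lem: wedge automaton recognition}. By construction each intermediate primary vertex $\verti_j$ ($0<j<k$) and each secondary vertex has degree two, so the underlying graph is a bouquet of circles indexed by $S$; its fundamental group is free with basis $\{\walki_h\}_{h\in S}$, and each $\walki_h$ maps to $h$. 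Hence $\recg{\Ati}=\gen{\,h\st h\in S\,}=H$, as required. Equivalently, one checks the two inclusions by hand: each generator and hence each product of the $h^{\pm 1}$ is read by a concatenation of the loops $\walki_h^{\pm 1}$, giving $H\leqslant\recg{\Ati}$; conversely any closed \walk\ at $\bp$ reduces — cancelling degenerate elementary \walk s, which carry trivial label, and using the degree-two forcing to traverse each entered loop fully — to such a concatenation without changing its label, giving $\recg{\Ati}\leqslant H$.

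Finally, if $H$ is finitely generated I take $S$ finite, so $\Ati$ has finitely many vertices and arcs and all its secondary labels $\Trivial$ are (finitely) generated; thus $\Ati$ is of finite type. Every step — computing the normal form of each generator, reading off its syllables, and laying down the corresponding wedges — is effective, so $\Ati$ is algorithmically constructible from the given generators. The only genuinely delicate point is the inclusion $\recg{\Ati}\leqslant H$: showing that an arbitrary, possibly backtracking, closed \walk\ still has its label in $H$. This is precisely where the triviality of the secondary labels and the degree-two structure of the intermediate primary vertices are essential, and phrasing it as ``$\recg{\Ati}$ is the image of the fundamental group of the bouquet under the label homomorphism'' is the cleanest way to discharge it without a tedious case analysis on reductions.
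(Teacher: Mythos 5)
Your construction is precisely the paper's own: the paper builds a trivially-labelled \emph{petal automaton} for each generator written in normal form and wedges them at a common basepoint into the \emph{flower automaton} $\Flower{W}$ --- exactly your bouquet, with the same choice of trivial secondary labels and half-labels carrying the syllables --- and then asserts $\recg{\Flower{W}}=\gen{W}=H$, where your fundamental-group argument merely supplies the detail the paper dispatches with ``clearly''. One small correction of phrasing only: since the generators are \emph{given} in normal form, no normal-form computation is needed (and none would be available in general without solvable word problem in the factors, which the proposition does not assume), but this does not affect the correctness of your construction.
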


\begin{proof}
Let $H=\langle W \rangle$, where $W=\{ w_1, w_2, \ldots\}$ is a set of generators for $H$. For every nontrivial generator in $W$, say $w$, consider its normal form as an element of~$\Gpfi*\Gpfii$, say $w=a_1 b_1\cdots a_s b_s$, with $s\geqslant 1$, $a_i\in \Gpfi$, $b_i\in \Gpfii$, $a_i\neq 1$ for all $i=2,\ldots ,s$, and $b_i\neq 1$ for all ${i=1,\ldots, s-1}$. Let~$\Flower{w}$ denote the \wedged\ depicted in Fig.~\ref{fig: wedge petal}, and called the \defin{petal automaton} corresponding to $w$. Clearly, $\recg{\,\Flower{w}}=\langle w \,\rangle$.

\ffigure{
\begin{tikzpicture}[shorten >=1pt, node distance=0.4 and 1.3, on grid,auto,>=stealth']
\node[state, accepting] (0) {};
\node[sstate] (a) [above right =  of 0] {$\scriptstyle{\trivial}$};
\node[state] (1) [above right =  of a] {};
\node[sstate] (b) [right =  of 1] {$\scriptstyle{\trivial}$};
\node[state] (2) [right =  of b] {};
\node[sstate] (d) [below right =  of 0] {$\scriptstyle{\trivial}$};
\node[state] (4) [below right =  of d] {};
\node[sstate] (c) [right =  of 4] {$\scriptstyle{\trivial}$};
\node[state] (3) [right =  of c] {};
\path[->](a) edge[] node[above left= -0.05] {$\trivial$}(0);
\path[->](a) edge[] node[above left] {\scriptsize{$a_1$}}(1);
\path[->](b) edge[] node[above] {$\trivial$}(1);
\path[->](b) edge[] node[above] {$b_1$}(2);
\path[->,dashed] (2) edge[bend left,out=90,in=90,min distance=10mm] node[right,] {\scriptsize{$a_2\cdots b_{s-1}$}}(3);
\path[->](c) edge[] node[below] {\scriptsize{$\trivial$}}(3);
\path[->](c) edge[] node[midway,below] {$a_s$}(4);
\path[->](d) edge[] node[below] {\scriptsize{$\trivial$}}(4);
\path[->](d) edge[] node[below] {\scriptsize{$b_s$}}(0);
\end{tikzpicture}
}{A wedge petal}{fig: wedge petal}

Now consider $\Flower{W}$ the disjoint union of all the $\Flower{w_i}$'s identifying the basepoints into a single primary vertex (declared as basepoint); the resulting object is a \wedged\  called the \defin{flower automaton} corresponding to $W$. Clearly, $\recg{\Flower{W}} =\langle W\rangle =H$. Moreover, if $|W|<\infty$ then $\Flower{W}$ is of finite type and constructible.
\end{proof}


\subsection{Reduced wedge automata}

In the same vein as in the classical Stallings graphs, we will ask our wedge graphs to be `deterministic' (in a precise sense specified below). Similar constructions are called `irreducible graphs' by Ivanov in~\cite{ivanov_intersection_1999}, and are particular cases of the so-called `folded graphs' in Kapovich--Weidman--Miasnikov~\cite{kapovich_foldings_2005}.

\begin{defn}\label{def: reduced wedge automata}
Let $\Gpfi, \Gpfii$ be two groups, and let $\Ati$ be a finite \wedged. We say that $\Ati$ is \defin{reduced} if the following conditions are satisfied:
\begin{enumerate*}[ind]
\item \label{item: reduced connected} $\Ati$ is connected;
\item \label{item: reduced deterministic}
every primary vertex of $\Ati$ is incident with at most one arc from $\Edgi_1\Ati$, and at most one arc from $\Edgi_2\Ati$;
\item \label{item: reduced wedge 1-free} no nondegenerate elementary \walk\ reads the trivial element; that is, for $\nu=1,2$, every $\nu$-secondary vertex $\vertii\in \Vertexs_{\nu}\,\Ati$, and every pair of \emph{different} \mbox{$\nu$-arcs} $\edgi_1, \edgi_2$ with $\start \edgi_1=\start \edgi_2=\vertii$, we have that $1\not\in \labl_{\edgi_1}^{-1} \labl_\vertii \labl_{\edgi_2}$ (equivalently, ${\labl_{\edgi_1}\labl_{\edgi_2}^{-1} \not\in \labl_\vertii}$).
\end{enumerate*}
\end{defn}

In a reduced wedge automaton $\Ati$, \cref{rem: no backtracking} can be restated in the following way: $\walki$ presents no backtracking if and only if it is alternating and the elementary \walk s on its elementary decomposition are all nondegenerate. In this case, additionally, property~(iii) from~\Cref{def: reduced wedge automata} ensures that the elementary decomposition of $\walki$ gives the syllable decomposition of every $g\in \labl_{\walki}$ as element from $\Gpfi*\Gpfii$ (since nondegenerate elementary \walk s do not admit the trivial element as a label). However, this is not the whole story: even with some of the $\walki_i$'s being degenerate, we can still get the syllable decomposition of $g\in \labl_{\walki}$ assuming that the elements picked from the labels of the backtracking vertices (if any) are nontrivial. This motivates the following lemma, definition and the subsequent important technical lemma.

\begin{lem}\label{no1}
For any reduced $(\Gpfi, \Gpfii)$-automaton $\Ati$, and any nontrivial walk $\gamma$ between two primary vertices and without backtracking, $1\not\in \labl_{\gamma}$.
\end{lem}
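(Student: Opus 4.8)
The plan is to read off, directly from the reducedness axioms, that the elementary decomposition of $\gamma$ already exhibits the free-product normal form of every element of $\labl_\gamma$, so none of them can be trivial.

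Concretely, I would first fix the elementary decomposition $\gamma = \walki_1 \cdots \walki_r$, writing $\walki_i = \verti_{i-1}(\edgi_i^{-1}\vertii_i\edgi'_i)\verti_i$. Since $\gamma$ is nontrivial we have $r\geqslant 1$. Because $\gamma$ has no backtracking, the reduced-automaton restatement of \Cref{rem: no backtracking} (stated in the paragraph preceding this lemma) guarantees that $\gamma$ is alternating and that every $\walki_i$ is nondegenerate; in particular the two $\nu_i$-arcs $\edgi_i,\edgi'_i$ issuing from $\vertii_i$ are distinct.

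Next I would invoke condition~\ref{item: reduced wedge 1-free} of \Cref{def: reduced wedge automata}. Since $\edgi_i\neq\edgi'_i$ are different $\nu_i$-arcs with common source $\vertii_i$, it yields $1\notin \labl_{\edgi_i}^{-1}\labl_{\vertii_i}\labl_{\edgi'_i}$; that is, for \emph{every} choice $c_i\in\labl_{\vertii_i}$ the bracket $\labl_{\edgi_i}^{-1}c_i\labl_{\edgi'_i}$ is a nontrivial element of $G_{\nu_i}$. Now take an arbitrary $g\in\labl_\gamma$, so that $g=\prod_{i=1}^{r}\bigl(\labl_{\edgi_i}^{-1}c_i\labl_{\edgi'_i}\bigr)$ for suitable $c_i\in\labl_{\vertii_i}$. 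By the previous step each factor is nontrivial, and since $\gamma$ is alternating these factors lie in alternating free factors of $\Gpfi*\Gpfii$. Hence \Cref{sylabes} applies verbatim and tells us that these brackets are exactly the syllable decomposition of $g$; with $r\geqslant 1$ nontrivial, alternating syllables we conclude $g\neq 1$. As $g\in\labl_\gamma$ was arbitrary, $1\notin\labl_\gamma$, as claimed.

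The only real subtlety — and the step I would be most careful about — is the passage from ``no backtracking'' to ``alternating, with every elementary piece nondegenerate''. This is precisely the content of the reduced-automaton restatement of \Cref{rem: no backtracking}, and it is exactly what licenses the hypotheses of \Cref{sylabes}; once it is in place, the statement is a direct combination of axiom~\ref{item: reduced wedge 1-free} with the normal form theorem for free products.
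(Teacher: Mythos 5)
Your argument is correct and coincides with the paper's own proof, which is a one-line appeal to the paragraph preceding the lemma: no backtracking in a reduced automaton means the walk is alternating with all elementary pieces nondegenerate, so by condition~(iii) of the definition each bracket is a nontrivial syllable and the normal form theorem for $\Gpfi*\Gpfii$ rules out the trivial element. You have merely unpacked the same chain of references (\Cref{rem: no backtracking}, axiom~(iii), \Cref{sylabes}) in full detail, which is fine.
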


\begin{proof}
This is clear since, by the previous remark, the label of $\walki$ gives its syllable decomposition as element of $G_1 *G_2$.
\end{proof}

\begin{defn}
Let $\walki$ be a \walk\ in a wedge automaton $\Gri$, with elementary decomposition
$\walki =\walki_1 \, \walki_2 \,\cdots\, \walki_r =\verti_0 (\edgi_1^{-1} \vertii_1 \edgi'_1) \verti_1 (\edgi_2^{-1} \vertii_2 \edgi'_2) \verti_2 \cdots \verti_{r-1}(\edgi_r^{-1} \vertii_r \edgi'_r) \verti_r$.
We define the \defin{reduced label} of $\walki$ as
\begin{equation*}
\red{\labl}_{\walki }=\Set{ (\labl_{\edgi_1}^{-1} c_1 \labl_{\edgi'_1}) (\labl_{\edgi_2}^{-1} c_2 \labl_{\edgi'_2}) \cdots (\labl_{\edgi_r}^{-1} c_r \labl_{\edgi'_r}) \relmiddle{|} \begin{array}{l} c_i\in \labl_{\vertii_i} \\ c_i\neq 1 \text{ if } \walki_i \text{ is degenerate} \end{array} \!\!}
\,\subseteq\,
\labl_{\walki}.
\end{equation*}

\end{defn}

\begin{lem}\label{alternating}
For a reduced $(\Gpfi, \Gpfii)$-automaton $\Ati$, we have $\recg{\Ati} =\cup_{\walki} \labl_{\walki }=\cup_{\,\what{\walki}} \,\red{\labl}_{\what{\walki}}$, where the first union runs over all \bp-\walk s $\walki$ of $\Ati$, and the second one only over the alternating \bp-\walk s $\what{\walki}$ of $\Ati$.
\end{lem}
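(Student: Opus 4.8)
The first equality is nothing but the definition of $\recg{\Ati} = \recg{\Ati}_{\bp}$, so the genuine content is the second equality, and in fact only its left-to-right inclusion: the reverse inclusion is immediate, since every alternating \bp-\walk\ is in particular a \bp-\walk\ and $\red{\labl}_{\what{\walki}} \subseteq \labl_{\what{\walki}}$ by definition. The plan is therefore to take an arbitrary $g \in \recg{\Ati}$ and, among all \bp-\walk s whose label contains $g$, select one of \emph{minimal length}, say $\walki = \walki_1 \cdots \walki_r$ (in its elementary decomposition), together with a realizing choice $c_i \in \labl_{\vertii_i}$ at its secondary vertices, so that $g = (\labl_{\edgi_1}^{-1} c_1 \labl_{\edgi_1'}) \cdots (\labl_{\edgi_r}^{-1} c_r \labl_{\edgi_r'})$. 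Such a minimal walk exists because lengths are nonnegative integers and, by the very definition of $\recg{\Ati}$, at least one \bp-\walk\ carries $g$ in its label. I would then show that minimality forces $\walki$ to be alternating and to use a nontrivial $c_i$ at every degenerate elementary \walk, which is exactly the statement $g \in \red{\labl}_{\what{\walki}}$ with $\what{\walki} = \walki$.

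For the alternation, suppose two consecutive elementary \walk s $\walki_k,\walki_{k+1}$ had the same type $\nu$. At their common primary vertex $\verti_k$, the arcs $\edgi_k'$ and $\edgi_{k+1}$ are both $\nu$-arcs incident with $\verti_k$ (indeed both terminating at it), so the determinism condition~\ref{item: reduced deterministic} forces $\edgi_k' = \edgi_{k+1}$, and in particular $\vertii_k = \vertii_{k+1}$. Consequently $\labl_{\edgi_k'}\labl_{\edgi_{k+1}}^{-1} = 1$ and the two bracketed factors telescope, namely $(\labl_{\edgi_k}^{-1} c_k \labl_{\edgi_k'})(\labl_{\edgi_{k+1}}^{-1} c_{k+1} \labl_{\edgi_{k+1}'}) = \labl_{\edgi_k}^{-1}(c_k c_{k+1})\labl_{\edgi_{k+1}'}$, with $c_k c_{k+1} \in \labl_{\vertii_k}$ because $\labl_{\vertii_k}$ is a subgroup. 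Thus the single elementary \walk\ $\verti_{k-1}\,\edgi_k^{-1}\,\vertii_k\,\edgi_{k+1}'\,\verti_{k+1}$ (with the label choice $c_k c_{k+1}$) yields a strictly shorter \bp-\walk\ whose label still contains $g$, contradicting minimality. Hence $\walki$ must be alternating.

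Similarly, if some elementary \walk\ $\walki_i$ were degenerate (so $\edgi_i = \edgi_i'$ and $\verti_{i-1} = \verti_i$) while the chosen $c_i$ were trivial, then its factor $\labl_{\edgi_i}^{-1}\cdot 1 \cdot \labl_{\edgi_i} = 1$ would be redundant, and deleting $\walki_i$ would produce a valid \bp-\walk\ (its endpoints agree, as $\verti_{i-1}=\verti_i$) of smaller length still carrying $g$, again contradicting minimality. Therefore every degenerate step of $\walki$ uses a nontrivial label, which together with the alternation established above gives $g \in \red{\labl}_{\walki}$ for the alternating \bp-\walk\ $\walki$. Since $g \in \recg{\Ati}$ was arbitrary, this proves the inclusion $\recg{\Ati} \subseteq \cup_{\what{\walki}} \red{\labl}_{\what{\walki}}$, and hence the lemma.

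I expect the only delicate point to be the telescoping step of the second paragraph: it is precisely here that reducedness is used, and the crux is the reading of the determinism condition~\ref{item: reduced deterministic} that forces two same-type arcs incident to the same primary vertex to coincide (the wedge-automaton analogue of the classical fold identifying equally-labelled edges out of a vertex). Once that identification is in place the factors merge inside the subgroup labelling the shared secondary vertex, and everything else is routine bookkeeping on the elementary decomposition.
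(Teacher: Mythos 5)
Your proof is correct and follows essentially the same route as the paper's: the same two local moves (merging consecutive same-type elementary \walk s via the determinism condition~(ii), with the bracketed factors telescoping inside the shared secondary vertex's subgroup, and deleting degenerate steps whose chosen label is trivial) carry all the content in both arguments. The only difference is packaging --- you run an extremal argument on a minimal-length \bp-\walk\ carrying $g$, whereas the paper applies the two reductions iteratively until the walk stabilizes --- which is a cosmetic, if slightly tidier, reorganization of the same proof.
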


\begin{proof}
The inclusion `$\supseteq$' is clear, since the first union is over more sets than the second one, and $\labl_{\walki}\supseteq \red{\labl}_{\walki}$.
To see `$\subseteq$', fix a \bp-\walk\ $\walki =\walki_1\cdots \walki_r$, and an element $g\in \labl_{\walki}$, and let us find an alternating \bp-\walk\ $\what{\walki}$ such that $g\in \red{\labl}_{\what{\walki}}$.

In fact, if $\walki$ is not alternating then there are $\nu=1,2$ and $i=1,\ldots ,r-1$ such that $\walki_i=\verti_{i-1}\edgi_i^{-1} \vertii_i \edgi'_i \verti_i$ and $\walki_{i+1}=\verti_{i}\edgi_{i+1}^{-1} \vertii_{i+1} \edgi'_{i+1} \verti_{i+1}$ are both of type $\nu$; so, by condition (ii) in~\Cref{def: reduced wedge automata}, $\vertii_i=\vertii_{i+1}$ and $\edgi'_i=\edgi_{i+1}$. Replacing $\walki_i\walki_{i+1}$ by $=\verti_{i-1}\edgi_i^{-1} \vertii_i \edgi'_{i+1} \verti_{i+1}$, we get a new \bp-\walk\ $\walki_{\!_{(1)}}$, with shorter elementary decomposition and such that $g\in \labl_{\walki_{\!_{(1)}}}$ as well, since $(\labl_{\edgi_i}^{-1} c_i \labl_{\edgi'_i}) (\labl_{\edgi_{i+1}}^{-1} c_{i+1} \labl_{\edgi'_{i+1}}) =\labl_{\edgi_i}^{-1} (c_i c_{i+1})\labl_{\edgi'_{i+1}}$, for all $c_i, c_{i+1}\in \labl_{\vertii_i}\leqslant G_{\nu}$.
Repeating this operation a finite number of times, say $k$, we can assume that~${\what{\walki} = \walki_{\!_{(k)}}}$ is alternating and $g\in \labl_{\what{\walki}}$.


It remains to prove that, maybe simplifying $\hat{\gamma}$, $g\in \tilde{\ell}_{\hat{\gamma}}$: if $\vertii_j$ is the secondary vertex in the degenerate elementary \walk\ $\walki_j =\verti_{j-1}\edgi_j^{-1}\vertii_j\edgi_j \verti_{j-1}$, and the corresponding $c_j$ picked in the formation of $g$ is trivial, then just ignore $(\labl_{\edgi_j}^{-1}c_j \labl_{\edgi_{j}})=1$, and realize $g$ in the label of~$\what{\walki}_{\!_{(1)}}=\walki_1\cdots \walki_{j-1}\walki_{j+1}\cdots \walki_s$, a
\bp-\walk\ with shorter elementary decomposition, which will be again alternating after repeating the operation in the above paragraph. Repeating this operations a finite number of times, until no trivial choices are made at the degenerate vertices, we obtain the desired result.
\end{proof}

\begin{rem}
The usefulness of the previous lemma is the following: when realizing an element $g\in \recg{\Ati}$ from the subgroup recognized by a reduced automaton $\Sti$ as $g\in \red{\labl}_{\what{\walki}}$ for some alternating \mbox{\bp-\walk}~$\what{\walki}$, the elementary decomposition of $\what{\walki}$ automatically provides the syllable decomposition of $g$ as element of $\Gpfi*\Gpfii$. This is a crucial bridge between the algebraic and the geometric aspects of the theory.
\end{rem}


One of the most useful applications of reduced $(\Gpfi, \Gpfii)$-automata is that they naturally encode the Kurosh free product decomposition (the induced splitting) of their recognized subgroups as subgroups of $\Gpfi*\Gpfii$. With some technical differences, our exposition follows~\cite{ivanov_intersection_1999,kapovich_foldings_2005}, but with special emphasis on the algorithmic point of view. The theorem below appears as Lemma~4 in Ivanov's~\cite{ivanov_kurosh_2008} and, in a more general setting, as Proposition~4.3 in Kapovich--Weidmann--Myasnikov~\cite{kapovich_foldings_2005}.
A detailed proof (including the algorithmicity in the the finite type case) can be found in~\cite{delgado_rodriguez_extensions_2017}.

\begin{nott}
Let $\Ati$ be a reduced $(\Gpfi, \Gpfii)$-wedge automaton. Fix a maximal subtree $\ATreei$ in $\Ati^{+}$, and let $\Edgi=\Edgi \Ati^{+} \setmin \Edgi \ATreei$ be the set of arcs of $\Ati^{+}$ outside $\ATreei$. For every two vertices $u,v\in V\Ati$, let $\ATreei[u,v]$ denote the unique walk without backtracking from $u$ to $v$ along the tree $\ATreei$. Now, for every vertex $u\in V\Ati$, let $z_u$ denote the basic label $z_u =\labl^{\bullet}_{\scriptscriptstyle{\ATreei}[u,\scriptsize{\bp}]} \in G_1 *G_2$ and, for every arc $\edgi\in \Edgi^{\pm}$, let $x_\edgi$ denote the element $x_\edgi =z_{\start \edgi}^{-1} \labl_{\edgi}\, z_{\term \edgi}\in G_1 *G_2$.
\end{nott}

\begin{thm}\label{thm: from reduced to Kurosh}
Let $\Ati$ be a reduced $(\Gpfi, \Gpfii)$-automaton. Then, with the above notations, the subgroup recognized by $\Ati$ is
 \begin{equation}\label{eq: algorithmic Kurosh decomposition}
\Recg{\Ati} =\FF * \left( \Freeprod_{\vertii\in \Vertexs_1\Ati} z_\vertii^{-1} \, \labl_\vertii \, z_\vertii \right) * \left(\Freeprod_{\vertii\in \Vertexs_2\Ati} z_\vertii^{-1} \, \labl_\vertii \, z_\vertii \right) \,,
 \end{equation}
where $\FF$ is the free subgroup of $\recg{\Ati}$ freely generated by the set $\Set{ x_\edgi \mid \edgi\in \Edgi}$. Moreover, if $\Ati$ is of finite type, then the subgroup $\recg{\Ati}$ is finitely generated, and we can algorithmically compute a Kurosh decomposition like~\eqref{eq: algorithmic Kurosh decomposition} for $\recg{\Ati}\leqslant G_1*G_2$. \qed
\end{thm}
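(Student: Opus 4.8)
The plan is to establish the displayed identity by proving the two inclusions defining $\recg{\Ati}$ and then upgrading the resulting generation statement to a genuine free product, after which the algorithmic claim follows by inspection. The guiding principle throughout is the syllable control provided by reducedness: an alternating, backtracking-free walk reads an element whose syllable decomposition over $\Gpfi*\Gpfii$ is read off directly from its elementary decomposition (\Cref{alternating}, \Cref{no1}). For the inclusion $\supseteq$, each generator on the right-hand side is the basic label of an explicit $\bp$-walk. Indeed, for an arc $\edgi\in\Edgi$ outside the fixed maximal tree $\ATreei$, the element $x_\edgi=z_{\start\edgi}^{-1}\labl_\edgi z_{\term\edgi}$ is read by the $\bp$-walk that runs along $\ATreei$ from $\bp$ to $\start\edgi$, crosses $\edgi$, and returns along $\ATreei$; and for a secondary vertex $\vertii$ and any $c\in\labl_\vertii$, the conjugate $z_\vertii^{-1}c\,z_\vertii$ is read by the walk that travels along $\ATreei$ to the primary $\ATreei$-neighbour of $\vertii$, picks $c$ in a degenerate elementary walk at $\vertii$, and returns. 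Hence $\FF$ and every $z_\vertii^{-1}\labl_\vertii z_\vertii$ lie in $\recg{\Ati}$.

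For the inclusion $\subseteq$, I would start from an arbitrary $g\in\recg{\Ati}$, which by \Cref{alternating} is realized as $g\in\red{\labl}_{\what{\walki}}$ for some alternating $\bp$-walk $\what{\walki}=\walki_1\cdots\walki_r$ with elementary decomposition $\verti_0(\edgi_1^{-1}\vertii_1\edgi'_1)\verti_1\cdots\verti_{r-1}(\edgi_r^{-1}\vertii_r\edgi'_r)\verti_r$ and $\verti_0=\verti_r=\bp$. I then telescope by inserting the trivial factors $z_{\verti_k}z_{\verti_k}^{-1}$ at each visited primary vertex and $z_{\vertii_k}^{-1}z_{\vertii_k}$ around each picked element $c_k$. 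After cancelling every tree arc against the inserted $z$'s, each elementary walk $\walki_k$ contributes exactly one factor of the form $x_\edgi^{\pm1}$ (when the crossed arc lies outside $\ATreei$) or $z_{\vertii_k}^{-1}c_k z_{\vertii_k}\in z_{\vertii_k}^{-1}\labl_{\vertii_k}z_{\vertii_k}$. Collecting the factors expresses $g$ as a product of the claimed generators, proving $\subseteq$ and hence that these generators generate $\recg{\Ati}$.

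It remains to show the generation is a free product in the stated form, i.e.\ that the natural surjection from the abstract free product $\FF_0*\big(\Freeprod_{\vertii\in\Vertexs_1\Ati}z_\vertii^{-1}\labl_\vertii z_\vertii\big)*\big(\Freeprod_{\vertii\in\Vertexs_2\Ati}z_\vertii^{-1}\labl_\vertii z_\vertii\big)$, with $\FF_0$ free on abstract symbols $\{x_\edgi\}$, onto $\recg{\Ati}$ is injective. I would take a nontrivial reduced word $w$ in this abstract free product and translate it back into a $\bp$-walk $\gamma_w$ in $\Ati$, turning each letter $x_\edgi^{\pm1}$ into the tree/non-tree detour above and each nontrivial free-product syllable from a factor $z_\vertii^{-1}\labl_\vertii z_\vertii$ into an excursion along $\ATreei$ to $\vertii$ picking a nontrivial $c$. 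The crux is that reducedness of $w$ forces $\gamma_w$, after the tree cancellations, to be alternating and free of backtracking: conditions (ii) and (iii) of \Cref{def: reduced wedge automata} guarantee that two consecutive distinct letters neither create a degenerate elementary walk nor merge into a single syllable. Then \Cref{no1} gives $1\notin\labl_{\gamma_w}$, so $w$ maps to a nontrivial element of $\Gpfi*\Gpfii$, yielding injectivity and hence the decomposition with $\FF$ the image of $\FF_0$, free on $\{x_\edgi\}$. This is the main obstacle: one must bookkeep carefully how the conjugators $z_\vertii$ interact with the edge and vertex generators so that a reduced word maps \emph{exactly} to an alternating, backtracking-free walk, with \Cref{def: reduced wedge automata}(iii) handling the delicate case where two consecutive syllables come from the same secondary vertex.

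Finally, for the algorithmic statement, assume $\Ati$ is of finite type. Then $\Edgi$ is finite, so $\FF$ has finite rank $|\Edgi|$; moreover there are finitely many conjugated factors, each finitely generated since every $\labl_\vertii$ is given by a finite generating set. Hence $\recg{\Ati}$ is finitely generated. All the data --- a maximal tree $\ATreei$, the basic labels $z_u$, the generators $x_\edgi$, and the conjugated generating sets $z_\vertii^{-1}\labl_\vertii z_\vertii$ --- are finite products of the given labels and are therefore computable in normal form, which produces the Kurosh decomposition effectively.
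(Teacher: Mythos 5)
Your overall architecture is the standard one (note the paper itself gives no in-text proof of this theorem: it records the statement as Lemma~4 of Ivanov's~\cite{ivanov_kurosh_2008} and Proposition~4.3 of~\cite{kapovich_foldings_2005}, and defers details to~\cite{delgado_rodriguez_extensions_2017}), and your first two steps are correct: each generator is indeed the label of an explicit $\bp$-\walk, and the telescoping identity $\labl_{\edgi_k}^{-1}c_k\labl_{\edgi'_k}=z_{\verti_{k-1}}\,x_{\edgi_k}^{-1}\,\bigl(z_{\vertii_k}^{-1}c_k z_{\vertii_k}\bigr)\,x_{\edgi'_k}\,z_{\verti_k}^{-1}$, together with $x_\edgi=1$ for tree arcs, gives generation. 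The flaw is in your injectivity step. The walk $\gamma_w$ you construct is \emph{not} free of backtracking, so \Cref{no1} does not apply to it: every syllable coming from a vertex factor $z_\vertii^{-1}\labl_\vertii z_\vertii$ is realized by a \emph{degenerate} elementary \walk\ at $\vertii$ (in and out along the same tree arc), which is exactly backtracking in the sense of \Cref{rem: no backtracking}. Worse, the conclusion you draw, $1\notin\labl_{\gamma_w}$, is false in general, because $\labl_{\gamma_w}$ allows \emph{trivial} picks at secondary vertices: already for $w$ a single syllable from a vertex factor, $\labl_{\gamma_w}=z_\vertii^{-1}\labl_\vertii z_\vertii\ni 1$. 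So as written, the step ``reduced $w$ $\Rightarrow$ $1\notin\labl_{\gamma_w}$ $\Rightarrow$ injectivity'' fails both in hypothesis and in conclusion.

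The repair stays inside the paper's toolkit but uses the \emph{reduced} label, not \Cref{no1}: show that the specific element determined by $w$ lies in $\red{\labl}_{\gamma_w}$ for an \emph{alternating} \walk\ $\gamma_w$ whose degenerate elementary \walk s all carry the nontrivial picks $c\neq 1$ supplied by the syllables of $w$, and then conclude via \Cref{sylabes}: nondegenerate elementary \walk s read nontrivial syllables by \Cref{def: reduced wedge automata}(iii), a degenerate one reads $\labl_f^{-1}c\,\labl_f\neq 1$ since $c\neq 1$, and alternation forces a nontrivial normal form in $\Gpfi*\Gpfii$. Alternation itself needs an argument you currently only gesture at: by \Cref{def: reduced wedge automata}(ii) a primary vertex meets at most one $\nu$-arc, so two distinct $\nu$-secondary vertices never share a primary neighbour, and the tree geodesic between consecutive same-type excursions necessarily crosses a secondary vertex of the other type; the remaining bookkeeping concerns the merges of \Cref{alternating} at cancellation junctions (e.g.\ a letter $x_\edgi^{\pm 1}$ followed by a vertex syllable at a secondary vertex lying on the tree path of $\edgi$), where one must check that the nontrivial pick survives into the merged elementary \walk\ and no trivial syllable is created. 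With that substitution the proof goes through, and your treatment of the finite-type/algorithmic claim is fine as it stands.
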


\begin{cor}\label{cor: fg <-> finite type}
For a reduced $(\Gpfi, \Gpfii)$-automaton $\Ati$, the group $\recg{\Ati}$ is finitely generated if and only if $\Ati$ is of finite type (\ie the underlying graph of $\Ati$ has finite rank, and all vertex labels of $\Ati$ are finitely generated). In this case, a set of generators (in the form of the Kurosh decomposition theorem) for $\recg{\Ati}$ is  computable. \qed
\end{cor}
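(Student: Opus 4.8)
The plan is to derive everything from the Kurosh decomposition~\eqref{eq: algorithmic Kurosh decomposition} of \Cref{thm: from reduced to Kurosh}, combined with the elementary fact that a free product of nontrivial groups is finitely generated if and only if it has finitely many factors, each of them finitely generated (Grushko's rank--additivity theorem, or directly the normal--form argument below).

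The implication \emph{finite type} $\Rightarrow$ \emph{finitely generated}, together with the computability of the generating set, is essentially the \emph{Moreover} clause of \Cref{thm: from reduced to Kurosh}: if $\Ati$ has finitely many vertices and arcs and every secondary label $\labl_\vertii$ is finitely generated, then in~\eqref{eq: algorithmic Kurosh decomposition} the free factor $\FF$ has finite rank (one free generator $x_\edgi$ per arc $\edgi\in\Edgi$ outside the fixed maximal tree $\ATreei$) and there are only finitely many conjugated factors $z_\vertii^{-1}\labl_\vertii z_\vertii$, each finitely generated; hence $\recg{\Ati}$ is a free product of finitely many finitely generated groups, and~\eqref{eq: algorithmic Kurosh decomposition} displays an algorithmically computable finite generating set.

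For the converse I would assume $\recg{\Ati}$ finitely generated and look again at~\eqref{eq: algorithmic Kurosh decomposition}. Since in a free product every element involves, in normal form, only finitely many factors, a finite generating set of $\recg{\Ati}$ lies in the sub-free product of finitely many of the factors appearing in~\eqref{eq: algorithmic Kurosh decomposition}; as this sub-free product equals the whole of $\recg{\Ati}$, all the remaining factors must intersect it trivially and so be trivial. Applied to the free part this forces $\FF$ to have finite rank, i.e. the set $\Edgi$ of arcs outside $\ATreei$ to be finite, so the underlying graph of $\Ati$ has finite rank. Applied to the vertex factors it forces all but finitely many secondary vertices to carry the trivial label; and each of the remaining (nontrivial) labels $\labl_\vertii$, being isomorphic to a free factor --- hence a retract --- of the finitely generated group $\recg{\Ati}$, is itself finitely generated.

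The step I expect to be most delicate is passing from \emph{`finite rank together with finitely many finitely generated nontrivial labels'} to genuine finiteness of the automaton: secondary vertices carrying the trivial label (and, more generally, any inessential combinatorial structure) contribute only trivial factors to~\eqref{eq: algorithmic Kurosh decomposition}, so they are invisible both to the recognized subgroup and to the Grushko--type count, and finite generation of $\recg{\Ati}$ does \emph{not} by itself bound their number. Controlling them --- so that `finite type' and `$\recg{\Ati}$ finitely generated' become genuinely equivalent --- is exactly where one must exploit reducedness (determinism, condition~(iii), and the minimality/absence of redundant secondary vertices) rather than the purely free-product bookkeeping used above.
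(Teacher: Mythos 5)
Your argument is correct and takes essentially the same route as the paper's: \Cref{cor: fg <-> finite type} is stated there without proof precisely because it is meant to be read off from the Kurosh decomposition~\eqref{eq: algorithmic Kurosh decomposition} of \Cref{thm: from reduced to Kurosh} together with the Grushko-type fact that a free product is finitely generated exactly when it has finitely many nontrivial factors, each finitely generated, the computability claim being the \emph{Moreover} clause of that theorem; both of your directions, including the retract argument for the converse, are the intended ones. One correction concerning your final paragraph: the delicacy you flag --- bounding the number of trivially-labelled secondary vertices --- is resolved in the paper not by reducedness but by the standing convention, stated just after \Cref{def: wedge automaton}, that from that point on all wedge automata are assumed finite (finite underlying digraph); under that convention ``finite type'' amounts to ``all vertex labels finitely generated'', which is exactly what your converse produces. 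Reducedness alone would \emph{not} supply the control you hope for: an infinite alternating chain of wedges whose secondary vertices all carry the trivial label, with arc labels chosen so that $\labl_{\edgi_1}\labl_{\edgi_2}^{-1}\neq 1$ at each secondary vertex, satisfies all three conditions of \Cref{def: reduced wedge automata} yet recognizes the trivial (hence finitely generated) group, so without the finiteness convention the ``only if'' direction of the corollary, read with the literal definition of finite type, would fail.
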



\Cref{thm: from reduced to Kurosh} and~\Cref{cor: fg <-> finite type} are easily seen to be false if we substitute reduced automata by general wedge automata.

\subsection{Effective reduction of wedged automata}

Following~\cite{ivanov_intersection_1999} and \cite{kapovich_foldings_2005}, the next step is to show that every finitely generated subgroup $H\leqslant \Gpfi*\Gpfii$ is the subgroup recognized by some reduced $(\Gpfi, \Gpfii)$-automaton of finite type.
We introduce several elementary operations on wedge automata which will not change their recognized subgroup (to simplify notation, we shall work with positive arcs and assume that everything done to an arc $\edgi$ will also be done accordingly to $\edgi^{-1}$).

\begin{defn} \label{def: wedge transformations}
Let us consider the following elementary transformations on a $(\Gpfi, \Gpfii)$-wedge automata:
\begin{enumerate}[ind]
\item \label{item: adjustement transf}
\defin{Adjustment}: replacing the label of any arc $\edgi$ leaving a secondary vertex $\vertii$, by $c\cdot \labl_{\edgi}$, for any $c\in \labl_{\vertii}$; see~\Cref{fig: adjustment wedge}.

\bigskip
\begin{minipage}[t]{\linewidth}
\centering
\newcount\mycount
\begin{tikzpicture}[shorten >=1pt, node distance=0.5 and 1.75, on grid,auto,>=stealth']
\node[sstate] (q1) {$\scriptstyle{C}$};
\foreach \number in {1,...,2}{\mycount=\number \advance\mycount by -1
\multiply\mycount by 40 \advance\mycount by 0
\node[state] (p\number) at (215+\the\mycount:1.5cm) {};
}
\node[state] (pk) at (325+\the\mycount:1.5cm) {};
\path[->](q1) edge[] node[above left= -0.1] {\scriptsize{$\gpi_1$}}(p1);
\path[->](q1) edge[] node[above left= -0.1] {\scriptsize{$\gpi_2$}}(p2);
\path[->](q1) edge[] node[above right= -0.05] {\scriptsize{$\gpi_k$}}(pk);
\foreach \number in {1,...,3}{\mycount=\number \advance\mycount by -1
\multiply\mycount by 10 \advance\mycount by 0
\node[dot] (d\number) at (280+\the\mycount:0.75cm) {};
}
\node[] (i) [] [below right = of q1] {};
\node[] (f) [] [right = 33pt of i] {};
\path[->,thin,>=to] (i) edge[bend left](f);

\begin{scope}[shift={(4.75,0)}]
\node[sstate] (q'1) {$\scriptstyle{C}$};
\foreach \number in {1,...,2}{\mycount=\number \advance\mycount by -1
\multiply\mycount by 40 \advance\mycount by 0
\node[state] (p'\number) at (215+\the\mycount:1.5cm) {};
}
\node[state] (p'k) at (325+\the\mycount:1.5cm) {};
\path[->](q'1) edge[] node[above left= -0.1] {\scriptsize{$c\gpi_1$}}(p'1);
\path[->](q'1) edge[] node[above left= -0.1] {\scriptsize{$\gpi_2$}} (p'2);
\path[->](q'1) edge[] node[above right= -0.05] {\scriptsize{$\gpi_k$}} (p'k);
\foreach \number in {1,...,3}{\mycount=\number \advance\mycount by -1
\multiply\mycount by 10 \advance\mycount by 0
\node[dot] (d'\number) at (280+\the\mycount:0.75cm) {};
}
\end{scope}
\end{tikzpicture}
\captionof{figure}{\protect{Adjustment}}\label{fig: adjustment wedge}
\end{minipage}

\item \defin{Conjugation}: (for $\nu=1,2$) replacing, given $\gpi \in G_{\nu}$, the label $\labl_{\vertii}$ of a $\nu$-secondary vertex $\vertii$, by $(\labl_{\vertii})^{\gpi^{-1}}=\gpi \labl_{\vertii} \gpi^{-1}$; and replacing the label $\labl_{\edgi_i}$ of every arc $\edgi_i$ incident from $\vertii$, by the respective $\gpi \labl_{\edgi_i}$; see~\Cref{fig: conjugation wedge}.

\bigskip
\begin{minipage}[t]{\linewidth}
\centering
\newcount\mycount
\begin{tikzpicture}[shorten >=1pt, node distance=0.5 and 1.75, on grid,auto,>=stealth']
\node[sstate] (q1) {$\scriptstyle{C}$};
\foreach \number in {1,...,2}{ \mycount=\number \advance\mycount by -1
\multiply\mycount by 40 \advance\mycount by 0
\node[state] (p\number) at (215+\the\mycount:1.5cm) {};
}
\node[state] (pk) at (325+\the\mycount:1.5cm) {};
\path[->](q1) edge[] node[above left= -0.1] {\scriptsize{$\gpi_1$}}(p1);
\path[->](q1) edge[] node[above left= -0.1] {\scriptsize{$\gpi_2$}}(p2);
\path[->](q1) edge[] node[above right= -0.05] {\scriptsize{$\gpi_k$}}(pk);
\foreach \number in {1,...,3}{\mycount=\number \advance\mycount by -1
\multiply\mycount by 10 \advance\mycount by 0
\node[dot] (d\number) at (280+\the\mycount:0.75cm) {};
}
\node[] (i) [] [below right = of q1] {};
\node[] (f) [] [right = 33pt of i] {};
\path[->,thin,>=to] (i) edge[bend left](f);
\begin{scope}[shift={(4.75,0)}]
\node[sstate] (q'1) {$\scriptstyle{C^{\gpi^{-1} } }$};
\foreach \number in {1,...,2}{ \mycount=\number \advance\mycount by -1
\multiply\mycount by 40 \advance\mycount by 0
\node[state] (p'\number) at (215+\the\mycount:1.65cm) {};
}
\node[state] (p'k) at (325+\the\mycount:1.5cm) {};
\path[->](q'1) edge[] node[above left= -0.1] {\scriptsize{$\gpi \gpi_1$}}(p'1);
\path[->](q'1) edge[] node[above left= -0.1] {\scriptsize{$\gpi \gpi_2$}}(p'2);
\path[->](q'1) edge[] node[above right= -0.05] {\scriptsize{$\gpi \gpi_k$}}(p'k);
\foreach \number in {1,...,3}{ \mycount=\number
\advance\mycount by -1 \multiply\mycount by 10 \advance\mycount by 0
\node[dot] (d'\number) at (280+\the\mycount:0.85cm) {};
}
\end{scope}
\end{tikzpicture}
\captionof{figure}{\protect{Conjugation}}\label{fig: conjugation wedge}
\medskip
\end{minipage}

\item \label{item: wedge automata isolation}\defin{Isolation}: removing from $\Ati$ all the connected components not containing the basepoint; see~\Cref{fig: isolation wedge}.

\medskip
\begin{minipage}[t]{\linewidth}
\centering
\begin{tikzpicture}
\node[state,accepting] (0) at (0,0) {};
\coordinate (1) at (1,0.5);
\coordinate (2) at (1,-0.3);
\coordinate (3) at (-1,0.3);
\coordinate (4) at (-0.9,-0.4);
\node[state,accepting] (a) at (4.3,0) {};
\newsavebox{\basepoint}
\savebox{\basepoint}{\draw[rounded corners=0.5mm] (0) \irregularcircle{0.5cm}{2mm};}
\draw (0,0) node {\usebox{\basepoint}};
\draw[rounded corners=0.2mm] (1) \irregularcircle{0.3cm}{1mm};
\draw[rounded corners=0.2mm] (2) \irregularcircle{0.2cm}{0.5mm};
\draw[rounded corners=0.2mm] (3) \irregularcircle{0.2cm}{0.5mm};
\draw[rounded corners=0.2mm] (4) \irregularcircle{0.1cm}{0.3mm};
\node[] (i) [] at (1.8,0) {};
\node[] (f) [] [right = 33pt of i] {};
\path[->,thin,>=to](i) edge[bend left](f);
\draw (4.3,0) node {\usebox{\basepoint}};
\node[] (c) [right = 1.5 of f] {};
\end{tikzpicture}
\captionof{figure}{\protect{Isolation}}\label{fig: isolation wedge}
\medskip
\end{minipage}

\item \label{item: primary open folding} \defin{Primary open folding}: for $\nu=1,2$, given two $\nu$-secondary vertices $\vertii_1,\vertii_2$ adjacent to the same primary vertex through respective arcs $\edgi_1,\edgi_2$ with the same label $\gpi \in G_{\nu}$, identify $\vertii_1$ and $\vertii_2$ into a new secondary vertex with label $ \gen{\labl_{\vertii_1}, \labl_{\vertii_2}}$, and identify the arcs $\edgi_1,\edgi_2$ into a new arc with the same label $\gpi$; see~\Cref{fig: primary open folding}.

\smallskip
\begin{minipage}[t]{\linewidth}
\centering
\begin{tikzpicture}[shorten >=1pt, node distance=0.5 and 1.5, on grid,auto,>=stealth']
  \node[state] (q1) {};

   \node[sstate] (p0) [above left = 1.3 and 0.7 of q1] {$\scriptstyle{C_1}$};
   \node[sstate] (p00) [above right = 1.3 and 0.7 of q1] {$\scriptstyle{C_2}$};

  \foreach \number [count=\count from 0] in {1,...,2}{
       \node[] (p\number) at (215+\count*40:0.7cm) {};
    }

    \node[] (pk) at (325:0.7cm) {};

    \foreach \n [count=\count from 0] in {1,...,2}{
       \node[] (p0\n) at ($(p0)+(175-\count*40:0.9cm)$) {};
        \path[->,red!50] (p0) edge[] (p0\n);
    }
    \node[] (p0k) at ($(p0)+(70:0.9cm)$) {};
    \path[->,red!50] (p0) edge[] (p0k);
    \foreach \n [count=\count from 0] in {1,...,3}{
       \node[dot,red!50] (pp0\n) at ($(p0)+(110-\count*10:0.5cm)$) {};
    }

    \foreach \n [count=\count from 0] in {1,...,2}{
       \node[] (p0\n) at ($(p00)+(5+\count*40:0.9cm)$) {};
        \path[->,blue!50] (p00) edge[] (p0\n);
    }
    \node[] (p00k) at ($(p00)+(110:0.9cm)$) {};
    \path[->,blue!50] (p00) edge[] (p00k);
    \foreach \n [count=\count from 0] in {1,...,3}{
       \node[dot,blue!50] (pp0\n) at ($(p00)+(90-\count*10:0.5cm)$) {};
    }

    \path[->]
        (p0) edge[]
            node[below left = -0.1] {$\gpi$}
            (q1);

        \path[->]
        (p00) edge[]
            node[below right = -0.07] {$\gpi$}
            (q1);

    \path[->]
        (p1) edge[]
            (q1);

    \path[->]
        (p2) edge[]
            (q1);

    \path[->]
        (pk) edge[]
            (q1);

    \foreach \number in {1,...,3}{
        \mycount=\number
        \advance\mycount by -1
  \multiply\mycount by 10
        \advance\mycount by 0
      \node[dot] (d\number) at (280+\the\mycount:0.5cm) {};
    }

   \node[] (i) [] [right = 1.6 of q1] {};
   \node[] (f) [] [right = 33pt of i] {};

   \path[->,thin,>=to]
        (i) edge[bend left]
         (f);

    \node[state] (q'1) [right = of f] {};

    \node[sstate] (p'0) [above = 1.3 of q'1] {$\scriptstyle{C}$};

    \path[->]
        (p'0) edge[]
            node[left] {$\gpi$}
            (q'1);

  \foreach \number [count=\count from 0] in {1,...,2}{
       \node[] (p'\number) at ($(q'1)+(215+\count*40:0.7cm)$) {};
    }

    \node[] (p'k) at ($(q'1)+(325:0.7cm)$) {};


    \path[->]
        (p'1) edge[]
            (q'1);

    \path[->]
        (p'2) edge[]
            (q'1);

    \path[->]
        (p'k) edge[]
            (q'1);

    \foreach \number in {1,...,3}{
        \mycount=\number
        \advance\mycount by -1
  \multiply\mycount by 10
        \advance\mycount by 0
      \node[dot] (d\number) at ($(q'1)+(280+\the\mycount:0.5cm)$) {};
    }

\foreach \n [count=\count from 0] in {1,...,2}{
       \node[] (p'0\n) at ($(p'0)+(160-\count*40:0.9cm)$) {};
        \path[->,red!50] (p'0) edge[] (p'0\n);
    }
    \node[] (p'0k) at ($(p'0)+(-180:0.9cm)$) {};
    \path[->,red!50] (p'0) edge[] (p'0k);
    \foreach \n [count=\count from 0] in {1,...,3}{
       \node[dot,red!50] (pp'0\n) at ($(p'0)+(145-\count*7:0.5cm)$) {};
    }

    \node[] (p'00k) at ($(p'0)+(0:0.9cm)$) {};
    \path[->,blue!50] (p'0) edge[] (p'00k);
    \foreach \n [count=\count from 0] in {1,...,2}{
       \node[] (p'0\n) at ($(p'0)+(20+\count*40:0.9cm)$) {};
        \path[->,blue!50] (p'0) edge[] (p'0\n);
    }

    \foreach \n [count=\count from 0] in {1,...,3}{
       \node[dot,blue!50] (pp'00\n) at ($(p'0)+(33+\count*7:0.5cm)$) {};
    }
\end{tikzpicture}
 \captionof{figure}{\protect{Primary open folding, where $C = \gen{C_1 \cup C_2}$}}
 \label{fig: primary open folding}
 \medskip
 \end{minipage}

\item \label{item: secondary open folding}
\defin{Secondary open folding:} for $\nu=1,2$, given a $\nu$-secondary vertex $\vertii$ adjacent to two different primary vertices $\verti_1,\verti_2$ through arcs $\edgi_1,\edgi_2$ having the same label $\gpi \in G_{\nu}$, identify the vertices $\verti_1$ and $\verti_2$, and the arcs $\edgi_1,\edgi_2$ into an arc maintaining the label $\gpi$; see~\Cref{fig: secondary open folding}.

\smallskip
\begin{minipage}[t]{\linewidth}
\centering
\begin{tikzpicture}[shorten >=1pt, node distance=0.5 and 1.5, on grid,auto,>=stealth']
\node[sstate] (q0) {$\scriptstyle{C}$};
\node[state] (p1) [below left = 1.3 and 0.7 of q1] {};
\node[state] (p2) [below right = 1.3 and 0.7 of q1] {};
\newcommand\N{2}
\newcommand{\eee}{90}
\newcommand{\bbb}{30}
\foreach \n  in {1,...,\N,4}{
\node[] (q\n) at ($(q0)+(\eee - \bbb*5/2+\n*\bbb:0.9cm)$) {};
\path[->] (q0) edge[] (q\n);
}
\foreach \m  in {1,...,3}{
\node[dot] (q0\m) at ($(q0)+(\eee- \bbb/5 +\m*\bbb/3:0.5cm)$) {};
}
\path[->](q0) edge[] node[left] {$\gpi$} (p1);
\path[->](q0) edge[] node[right] {$\gpi$} (p2);
\path[-,dashed, gray](p1) edge[out=50 , in=130 ,min distance=12mm] node[below] {$\scriptstyle{\trivial}$}(p2);
\renewcommand\N{2}
\renewcommand{\eee}{255}
\renewcommand{\bbb}{25}
\foreach \n  in {1,...,\N,4}{
\node[] (p1\n) at ($(p1)+(\eee - \bbb*5/2+\n*\bbb:0.8cm)$) {};
\path[->,red!50] (p1) edge[] (p1\n);
}
\foreach \m  in {1,...,3}{
\node[dot,red!50] (p1\m) at ($(p1)+(\eee- \bbb/5 +\m*\bbb/3:0.5cm)$) {};
}
\renewcommand\N{2}
\renewcommand{\eee}{280}
\renewcommand{\bbb}{25}
\foreach \n  in {1,...,\N,4}{
\node[] (p2\n) at ($(p2)+(\eee - \bbb*5/2+\n*\bbb:0.8cm)$) {};
\path[->,blue!50] (p2) edge[] (p2\n);
}
\foreach \m  in {1,...,3}{
\node[dot,blue!50] (p2\m) at ($(p2)+(\eee- \bbb/5 +\m*\bbb/3:0.5cm)$) {};
}
\node[] (i) [] [below right = of q0] {};
\node[] (f) [] [right = 33pt of i] {};
\path[->,thin,>=to] (i) edge[bend left] (f);
\node[sstate] (q'0) [above right = of f] {$\scriptstyle{C}$};
\node[state] (p'1) [below = 1.3 of q'0] {};
\path[->](q'0) edge[] node[left] {$\gpi$} (p'1);
\renewcommand\N{2}
\renewcommand{\eee}{90}
\renewcommand{\bbb}{30}
\foreach \n  in {1,...,\N,4}{
\node[] (q'\n) at ($(q'0)+(\eee - \bbb*5/2+\n*\bbb:0.9cm)$) {};
\path[->] (q'0) edge[] (q'\n);
}
\foreach \m  in {1,...,3}{
\node[dot] (q'0\m) at ($(q'0)+(\eee- \bbb/5 +\m*\bbb/3:0.5cm)$) {};
}
\renewcommand\N{2}
\renewcommand{\eee}{225}
\renewcommand{\bbb}{20}
\foreach \n  in {1,...,\N,4}{
\node[] (p'1\n) at ($(p'1)+(\eee - \bbb*5/2+\n*\bbb:0.8cm)$) {};
\path[->,red!50] (p'1) edge[] (p'1\n);
}
\foreach \m  in {1,...,3}{
\node[dot,red!50] (p'1\m) at ($(p'1)+(\eee- \bbb/5 +\m*\bbb/3:0.5cm)$) {};
}
\renewcommand\N{2}
\renewcommand{\eee}{310}
\renewcommand{\bbb}{20}
\foreach \n  in {1,...,\N,4}{
\node[] (p'1\n) at ($(p'1)+(\eee - \bbb*5/2+\n*\bbb:0.8cm)$) {};
\path[->,blue!50] (p'1) edge[] (p'1\n);
}
\foreach \m  in {1,...,3}{
\node[dot,blue!50] (p'1\m) at ($(p'1)+(\eee- \bbb/5 +\m*\bbb/3:0.5cm)$) {};
}
\end{tikzpicture}
\captionof{figure}{\protect{{Secondary open folding}}} \label{fig: secondary open folding}
\medskip
\end{minipage}

\item \label{item: wedge automata closed folding} \defin{Closed folding}: for $\nu=1,2$, given a primary vertex $\verti$ adjacent to a $\nu$-secondary vertex $\vertii$ by two non mutually inverse arcs $\edgi_1,\edgi_2$, consists in identifying $\edgi_1$ and $\edgi_2$ into a single arc with label $\labl_{\edgi_1}$, and change the label of $\vertii$ from $\labl_\vertii$ to $\langle \labl_\vertii,\, \labl_{\edgi_1 }\labl_{\edgi_2}^{-1}\rangle\leqslant G_{\nu}$; see~\Cref{fig: closed folding}.

\smallskip
\begin{minipage}[t]{\linewidth}
\centering
\begin{tikzpicture}[shorten >=1pt, node distance=0.5 and 1.5, on grid,auto,>=stealth']
  \node[sstate] (q0) {$\scriptstyle{C}$};

   \node[state] (p1) [below = 1.3 of q0] {};

\newcommand\N{2}
\newcommand{\eee}{90}
\newcommand{\bbb}{30}

\foreach \n  in {1,...,\N,4}{
       \node[] (q\n) at ($(q0)+(\eee - \bbb*5/2+\n*\bbb:0.9cm)$) {};
        \path[->] (q0) edge[] (q\n);
    }
\foreach \m  in {1,...,3}{
       \node[dot] (q0\m) at ($(q0)+(\eee- \bbb/5 +\m*\bbb/3:0.5cm)$) {};
    }

\path[->]
        (q0) edge[bend right]
            node[left] {${\gpi}$}
            (p1);

\path[->]
        (q0) edge[bend left]
            node[right] {${\gpii}$}
            (p1);

\renewcommand\N{2}
\renewcommand{\eee}{270}
\renewcommand{\bbb}{25}

\foreach \n  in {1,...,\N,4}{
       \node[] (p1\n) at ($(p1)+(\eee - \bbb*5/2+\n*\bbb:0.8cm)$) {};
        \path[->,red!50] (p1) edge[] (p1\n);
    }
\foreach \m  in {1,...,3}{
       \node[dot,red!50] (p1\m) at ($(p1)+(\eee- \bbb/5 +\m*\bbb/3:0.5cm)$) {};
    }

   \node[] (i) [] [below right = of q0] {};
   \node[] (f) [] [right = 33pt of i] {};

   \path[->,thin,>=to]
        (i) edge[bend left]
         (f);

    \node[sstate] (q'0) [above right = of f] {$\scriptstyle{C'}$};

        \node[state] (p'1) [below = 1.3 of q'0] {};

    \path[->]
        (q'0) edge[]
            node[left] {${\gpi}$}
            (p'1);

\renewcommand\N{2}
\renewcommand{\eee}{90}
\renewcommand{\bbb}{30}

\foreach \n  in {1,...,\N,4}{
       \node[] (q'\n) at ($(q'0)+(\eee - \bbb*5/2+\n*\bbb:0.9cm)$) {};
        \path[->] (q'0) edge[] (q'\n);
    }

\foreach \m  in {1,...,3}{
       \node[dot] (q'0\m) at ($(q'0)+(\eee- \bbb/5 +\m*\bbb/3:0.5cm)$) {};
    }

\renewcommand\N{2}
\renewcommand{\eee}{270}
\renewcommand{\bbb}{25}

\foreach \n  in {1,...,\N,4}{
       \node[] (p'1\n) at ($(p'1)+(\eee - \bbb*5/2+\n*\bbb:0.8cm)$) {};
        \path[->,red!50] (p'1) edge[] (p'1\n);
    }

\foreach \m  in {1,...,3}{
       \node[dot,red!50] (p'1\m) at ($(p'1)+(\eee- \bbb/5 +\m*\bbb/3:0.5cm)$) {};
    }

\end{tikzpicture}
 \captionof{figure}{\protect{Closed folding, where $C'=\gen{C,\gpi\gpii^{-1}}$}}
 \label{fig: closed folding}
 \medskip
 \end{minipage}
\end{enumerate}
\end{defn}

Note that the (folding) transformations \ref{item: primary open folding}, \ref{item: secondary open folding}, and \ref{item: wedge automata closed folding} in \Cref{def: wedge transformations} decrease the number of arcs in the automata exactly by $1$.
The following result is also straightforward.

\begin{lem}\label{lem: preserving transformations}
Let $\Ati$ be a reduced $(\Gpfi, \Gpfii)$-automaton, and let $\Ati\! \xtransf{\!\!} \Ati'$ be any of the elementary transformations in~\Cref{def: wedge transformations}. Then, the recognized subgroups of $\Ati$ and~$\Ati'$ coincide. \qed
\end{lem}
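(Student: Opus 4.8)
The plan is to verify, transformation by transformation, that the set $\recg{\Ati}=\bigcup_{\walki}\labl_{\walki}$ (the union running over all $\bp$-walks, as in~\Cref{lem: wedge automaton recognition}) is unchanged. Three of the six transformations in~\Cref{def: wedge transformations} are essentially immediate. For \emph{isolation}, every $\bp$-walk is confined to the connected component of the basepoint, so deleting the other components neither removes nor creates any $\bp$-walk, and $\recg{\Ati}=\recg{\Ati'}$. For \emph{adjustment} and \emph{conjugation} I would argue at the level of a single elementary walk $\verti_0\,\edgi^{-1}\vertii\,\edgi'\,\verti_1$, whose label is the \emph{set} $\labl_{\edgi}^{-1}\labl_{\vertii}\labl_{\edgi'}$. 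Since $\labl_\vertii$ is a subgroup, multiplying an incident arc label by an element $c\in\labl_\vertii$ only translates $\labl_\vertii$ on one side (leaving the coset product as a set unchanged), and conjugating $\labl_\vertii$ by $g$ while premultiplying every incident arc label by $g$ leaves $\labl_{\edgi}^{-1}\labl_{\vertii}\labl_{\edgi'}$ literally equal as a set. Hence every elementary-walk label, and therefore every walk label, is preserved.

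The real content is in the three foldings, for which I would prove the two inclusions separately. The inclusion $\recg{\Ati}\subseteq\recg{\Ati'}$ is the easy one in all three cases: the folding map (identifying the relevant vertices and arcs) carries any $\bp$-walk of $\Ati$ to a $\bp$-walk of $\Ati'$. For primary open folding and closed folding the merged secondary label contains each original one, namely $\langle\labl_{\vertii_1},\labl_{\vertii_2}\rangle\supseteq\labl_{\vertii_i}$ and $\langle\labl_\vertii,\labl_{\edgi_1}\labl_{\edgi_2}^{-1}\rangle\supseteq\labl_\vertii$, while for secondary open folding the secondary label is unchanged; in every case the surviving arc carries the common label $g=\labl_{\edgi_1}=\labl_{\edgi_2}$, so each element picked along the original walk can still be picked along its image. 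Thus $\labl_{\walki}\subseteq\labl_{\walki'}$ and $\recg{\Ati}\subseteq\recg{\Ati'}$.

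The reverse inclusion $\recg{\Ati'}\subseteq\recg{\Ati}$ is the main obstacle, and it is exactly here that the hypothesis of equal arc labels $g$ is used. The idea is to simulate, inside $\Ati$, each traversal of the enlarged object in $\Ati'$ by inserting a suitable \emph{detour} of controlled label. For \emph{closed folding} I would write a generic $c'\in\langle\labl_\vertii,\labl_{\edgi_1}\labl_{\edgi_2}^{-1}\rangle$ as an alternating product of elements of $\labl_\vertii$ and of $(\labl_{\edgi_1}\labl_{\edgi_2}^{-1})^{\pm1}$, and realize $g^{-1}c'g$ (the label of the closed walk through the merged arc) by concatenating short closed walks at the common primary vertex: picking $c\in\labl_\vertii$ along $\verti\,\edgi_1^{-1}\vertii\,\edgi_1\,\verti$ produces $g^{-1}cg$, whereas going out along one of the two arcs and back along the other (picking the trivial element) produces precisely the factor $\labl_{\edgi_1}\labl_{\edgi_2}^{-1}$ or its inverse, conjugated by $g$. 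For \emph{primary open folding} the scheme is the same, except that switching between the still-distinct vertices $\vertii_1,\vertii_2$ is done through the common primary vertex via $\edgi_1,\edgi_2$; since $\labl_{\edgi_1}=\labl_{\edgi_2}=g$ this switch contributes $g\,g^{-1}=1$, so an alternating product representing an element of $\langle\labl_{\vertii_1},\labl_{\vertii_2}\rangle$ is realized without any spurious label. For \emph{secondary open folding}, where two \emph{primary} vertices $\verti_1,\verti_2$ are identified, the key observation is that the path $\verti_1\,\edgi_1^{-1}\vertii\,\edgi_2\,\verti_2$ picking the trivial element of $\labl_\vertii$ has label $g^{-1}\cdot1\cdot g=1$, providing a trivial-label bridge (the dashed edge in~\Cref{fig: secondary open folding}); inserting it whenever a $\Ati'$-walk crosses the merged primary vertex between an $\verti_1$-arc and an $\verti_2$-arc produces an $\Ati$-walk with the same label.

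Concatenating these detours yields, in each case, a $\bp$-walk of $\Ati$ with the prescribed label, which closes the inclusion and hence gives $\recg{\Ati}=\recg{\Ati'}$. I expect the only genuinely delicate point to be the bookkeeping in this reverse inclusion: one must check that the inserted detours begin and end at the correct primary vertex so that the surrounding unchanged arcs still fit, and that the chosen decomposition of the enlarged group element matches the inserted factors syllable by syllable. Everything else reduces to the two elementary group-theoretic facts already used above (a subgroup absorbs left/right translations by its own elements, and $g\,g^{-1}=1$), so no additional machinery beyond~\Cref{def: wedge automaton} and the label calculus is required.
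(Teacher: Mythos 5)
The paper offers no written proof of this lemma --- it is dispatched with ``the following result is also straightforward'' --- so your proposal is supplying the omitted verification rather than competing with an argument in the text. Your overall structure is the right one: adjustment and conjugation are absorbed by the subgroup label at the level of a single elementary walk, isolation is trivial, and for the foldings the forward inclusion comes from pushing walks through the quotient map while the reverse inclusion is obtained by simulating the folded automaton inside the old one with detours of trivial (or controlled) label. Your detour constructions are correct: in primary open folding the switch $\vertii_1 \to \verti \to \vertii_2$ contributes $\labl_{\edgi_1}\labl_{\edgi_2}^{-1}=gg^{-1}=1$; in secondary open folding the bridge $\verti_1\,\edgi_1^{-1}\vertii\,\edgi_2\,\verti_2$ picking $1\in\labl_\vertii$ has label $g^{-1}\cdot 1\cdot g=1$; and in closed folding the walk out one arc and back the other realizes exactly $g^{-1}(\labl_{\edgi_1}\labl_{\edgi_2}^{-1})^{\pm 1}g$, so alternating products in $\gen{\labl_\vertii,\,\labl_{\edgi_1}\labl_{\edgi_2}^{-1}}$ are all reachable. (Incidentally, nothing in your argument uses reducedness of $\Ati$, which is as it should be: the lemma is applied in the proof of \Cref{thm: wedge reduced construction} to flower automata, which are not reduced.)

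There is one concrete misstatement, in the \emph{forward} inclusion for closed folding: you assert that ``in every case the surviving arc carries the common label $g=\labl_{\edgi_1}=\labl_{\edgi_2}$,'' but closed folding is precisely the move where the two arcs may carry \emph{different} labels --- that is why the vertex label is enlarged to $\gen{\labl_\vertii,\,\labl_{\edgi_1}\labl_{\edgi_2}^{-1}}$ at all (see \Cref{fig: closed folding}, where the labels are $\gpi\neq\gpii$). As written, your justification that ``each element picked along the original walk can still be picked along its image'' fails for a walk of $\Ati$ traversing $\edgi_2$: its image traverses the merged arc with label $\labl_{\edgi_1}$. The repair is one line, and uses the same mechanism as your reverse direction: since $\labl_{\edgi_2}^{-1}c=\labl_{\edgi_1}^{-1}\bigl((\labl_{\edgi_1}\labl_{\edgi_2}^{-1})c\bigr)$ and $(\labl_{\edgi_1}\labl_{\edgi_2}^{-1})c$ lies in the new vertex label $\gen{\labl_\vertii,\,\labl_{\edgi_1}\labl_{\edgi_2}^{-1}}$ for every $c\in\labl_\vertii$, the discrepancy between the two arc labels is absorbed by the enlarged pick at the secondary vertex, and $\labl_{\walki}\subseteq\labl_{\walki'}$ still holds. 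With that correction inserted, the proof is complete.
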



\begin{thm}[\longcite{ivanov_intersection_1999}]\label{thm: wedge reduced construction}
For any groups $\Gpfi, \Gpfii$, and any finitely generated subgroup $\Sgpi \leqslant \Gpfi * \Gpfii$, there exists a reduced $(\Gpfi, \Gpfii)$-automaton recognizing $\Sgpi$. Moreover, if both $\Gpfi$ and $\Gpfii$ have solvable membership problem, then given a finite set of generators of a  subgroup~$H\leqslant \Gpfi*\Gpfii$, one can algorithmically obtain a reduced $(\Gpfi, \Gpfii)$-automaton of finite type recognizing~$\Sgpi$.
\end{thm}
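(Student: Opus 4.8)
The plan is to take the wedge automaton already supplied by \Cref{existence} and \emph{reduce} it, i.e. iterate the elementary transformations of \Cref{def: wedge transformations} until the three conditions of \Cref{def: reduced wedge automata} are met. Starting from a finite generating set $W$ of $\Sgpi$ (in normal form), I would build the flower automaton $\Flower{W}$, which by \Cref{existence} is of finite type and recognizes $\Sgpi$. Because each transformation in \Cref{def: wedge transformations} leaves the recognized subgroup unchanged (\Cref{lem: preserving transformations}), every automaton obtained along the way still recognizes $\Sgpi$; so the entire problem reduces to showing that the reduction can be completed and that it terminates.

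Each reducedness condition is enforced by a specific repair. Condition~(i) (connectedness) is achieved once by an \emph{isolation}, deleting the components off the basepoint; foldings only identify vertices and arcs, so connectedness is never lost afterwards. Condition~(ii) (determinism) is violated when some primary vertex is incident with two $\nu$-arcs $\edgi_1,\edgi_2$: if they meet a common secondary vertex I apply a \emph{closed folding}; if they emanate from distinct secondary vertices $\vertii_1,\vertii_2$, a preliminary \emph{conjugation} at $\vertii_1$ by $\labl_{\edgi_2}\labl_{\edgi_1}^{-1}\in G_\nu$ equalizes the two arc labels, after which a \emph{primary open folding} merges $\vertii_1,\vertii_2$. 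Condition~(iii) fails exactly when $\labl_{\edgi_1}\labl_{\edgi_2}^{-1}\in\labl_{\vertii}$ for two distinct $\nu$-arcs $\edgi_1,\edgi_2$ leaving a secondary vertex $\vertii$; an \emph{adjustment} by that element makes their labels coincide, and then a \emph{closed folding} (if $\edgi_1,\edgi_2$ share a primary endpoint) or a \emph{secondary open folding} (otherwise) removes the defect. In every folding the new vertex label is a join, $\gen{\labl_{\vertii_1},\labl_{\vertii_2}}$ or $\gen{\labl_{\vertii},\,\labl_{\edgi_1}\labl_{\edgi_2}^{-1}}$, hence again finitely generated; together with the fact that the underlying graph stays finite, this shows that finite type is preserved.

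Termination is the standard folding monovariant: each of the three foldings strictly lowers the finite number of positive arcs, whereas adjustment, conjugation and isolation never raise it, and I would only ever perform an adjustment or conjugation as the immediate preparation for a folding. Thus only finitely many foldings occur, and once none of the defects above is present the automaton satisfies (i)--(iii), i.e. it is reduced and recognizes $\Sgpi$. Since the moves and the case analysis make sense for arbitrary $\Gpfi,\Gpfii$ irrespective of any decidability, this already proves the purely existential first assertion.

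For the algorithmic strengthening I would run the same loop (the given generators being first rewritten in normal form, which is possible because solvable membership entails a solvable word problem in each factor), the only genuinely non-combinatorial step being the \emph{detection} of a failure of condition~(iii): deciding whether $\labl_{\edgi_1}\labl_{\edgi_2}^{-1}$ lies in the finitely generated subgroup $\labl_{\vertii}\leqslant G_\nu$ is precisely an instance of $\SMP(\Gpfi)$ or $\SMP(\Gpfii)$, and the same tests decide when two arc labels have become equal modulo a vertex subgroup. Assuming both $\SMP(\Gpfi)$ and $\SMP(\Gpfii)$ solvable, all these membership queries are effective, the enlarged vertex labels are obtained by appending generators, and the procedure halts after finitely many steps with an explicit reduced $(\Gpfi,\Gpfii)$-automaton of finite type recognizing $\Sgpi$. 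The main obstacle I expect is the bookkeeping ensuring that exhausting the repairs yields \emph{all three} conditions at once---in particular that fixing~(iii) does not create an unrepairable violation of~(ii)---and that each repair is a bona fide instance of a transformation in \Cref{def: wedge transformations}, so that \Cref{lem: preserving transformations} keeps applying; pinning down that the membership tests in the factors suffice to drive every decision is what turns Ivanov's construction into an algorithm.
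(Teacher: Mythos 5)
Your proposal is correct and follows essentially the same route as the paper: start from the flower automaton of \Cref{existence}, iterate the moves of \Cref{def: wedge transformations} (conjugation then primary open folding for determinism, closed folding for parallel arcs, adjustment then open folding for condition~(iii)), terminate via the strictly decreasing arc count, and invoke \SMP\ in the factors only to detect the condition~(iii) defects. Your added observations --- that the conjugating element can be taken to be $\labl_{\edgi_2}\labl_{\edgi_1}^{-1}$, and that rewriting the input generators in normal form is effective because solvable membership implies solvable word problem in each factor --- are details the paper leaves implicit, and your closing worry is resolved exactly as you suggest: the loop repairs defects until none remains, and termination is guaranteed by the arc-count monovariant.
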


\begin{proof}
Since the trivial automaton recognizes the trivial subgroup, we can assume $H\neq 1$.
Starting with the corresponding flower automaton for $H$, successively do the following:
\begin{enumerate}[(I)]
\item if, for $\nu=1,2$, $\Ati$ has a primary vertex adjacent to two different $\nu$-secondary vertices, then apply a suitable conjugation move to one of the $\nu$- vertices and then a primary open folding.
\item if, for $\nu=1,2$, $\Ati$ has a primary vertex and a $\nu$-secondary vertex connected to each other by two non mutually inverse arcs then apply a closed folding.
\item if $\Ati$ has a secondary vertex $\vertii$ adjacent to two different primary vertices through arcs $\edgi_1$ and $\edgi_2$, $\start \edgi_1 =\start \edgi_2 =\vertii$, such that $\labl_{\edgi_1} \labl_{\edgi_2}^{-1}\in \labl_\vertii$, apply a suitable adjustment move to one of the two arcs, and then an elementary open folding.
\end{enumerate}

Note that this can be done algorithmically (we use MP in $G_1$ and $G_2$ for (III)). Since the number of edges decreases at each step, this process will eventually stop. It is clear that the resulting automaton is reduced, of finite type, and it recognizes $H$ by \Cref{lem: preserving transformations}.
\end{proof}

\subsection{A reduced automaton for the intersection}

Recall that if $\Gpfi$ and $\Gpfii$ are Howson, then $\Gpfi*\Gpfii$ is Howson as well; see~\cite{baumslag_intersections_1966} and~\cite{ivanov_intersection_1999}, the second proof being essentially the one we present here.

The goal of this section is to describe, following~\cite{ivanov_intersection_1999}, a reduced automaton ${\junction{H}{K}}$ recognizing the intersection $ H\cap K$, in terms of given reduced automata $\Ati_{\!H}$ and $\Ati_{\!K}$, with $\recg{\Ati_{\!H}}=H$ and $\recg{\Ati_{\!K}}=K$, where $H,K\leqslant G_1 *G_2$ are finitely generated subgroups. This construction is not algorithmic, in general, since $\junction{H}{K}$ may very well be not of finite type, even when both $\Ati_{\!H} $ and $\Ati_{\!K} $ are so (corresponding to the case where $H,K$ are both finitely generated but $H\cap K$ is not, see~\Cref{cor: fg <-> finite type}).

Later, in Section~\ref{sec: free product theorems}, we shall give an effective procedure which starts constructing, locally, the aforementioned automaton $\junction{H}{K}$. While running, there will be an alert observing the construction: if some (algorithmically checkable) specific situation occurs, then the intersection $H\cap K$ is not finitely generated. We shall achieve our goal by proving that, in finite time, either the alert sounds or the procedure terminates providing the finite type reduced automaton~$\junction{H}{K}$ as output.

\bigskip

So, suppose we are given two finite type reduced $(\Gpfi, \Gpfii)$-wedge automata $\Ati_{\!H}$ and $\Ati_{\!K}$ with recognized subgroups $\recg{\Ati_{\!H}}=H$ and $\recg{\Ati_{\!k}}=K$, where $H,K\leqslant \Gpfi*\Gpfii$ are finitely generated subgroups. Along the following paragraphs, we will first define the \emph{product automaton} $\Atipr$ of $\Ati_{\!H}$ and $\Ati_{\!K}$ whose main connected component will be the junction $\junction{H}{K}$.

Define the set of \defin{primary vertices of the product} $\Atipr$ as the cartesian product
$
\Vertexs_{0}\Atipr =\Vertexs_{0}\Ati_{\!H} \times \Vertexs_{0}\Ati_{\!K} .
$ 
and the basepoint \bp\ of $\Atipr$ to be the pair of basepoints, \ie $\bp = (\bp_H, \bp_K)$.
Now, for $\nu=1,2$, we consider the subsets of primary vertices (in $\Ati_{\!H} $ and $\Ati_{\!K} $) adjacent to some $\nu$-secondary vertex,
\ie $
\Vertexs_{_{{0}\leftarrow \nu}} \, \Ati_{\!H}
=
\set{\, \verti\in \Vertexs_{0}\,\Ati_{\!H} \mid \verti \text{ is adj. to a $\nu$-secondary in } \Ati_{H}\,}
\subseteq
\Vertexs_{0}\,\Ati_{\!H}\,
$
(and idem for $\Vertexs_{_{{0}\leftarrow \nu}} \, \Ati_{\!K}$)
and define the relation $\equiv_{\nu}$ on the set $\Vertexs_{_{{0}\leftarrow \nu}}\,\Ati_{\!H} \times \Vertexs_{_{{0}\leftarrow \nu}}\,\Ati_{\!K}  \subseteq \Vertexs_{0}\Atipr$ to be: $(\verti_1, \verti'_1)\equiv_{\nu} (\verti_2, \verti'_2)$ if and only if there exist two $\nu$-elementary walks: $\walki$  from $\verti_1$ to $\verti_2$ in $\Ati_{\!H}$ (say $\walki =\verti_1\edgi_1^{-1} \vertii \edgi_2\verti_2$, with $\vertii\in \Vertexs_{\nu}\,\Ati_{\!H} $), and $\walki'$ from $\verti'_1$ to $\verti'_2$ in $\Ati_{\!K}$ (say~$\walki' =\verti'_1\edgi^{\prime -1}_1\vertii'\edgi'_2\verti'_2$, with $\vertii'\in \Vertexs_{\nu}\,\Ati_{\!K} $), such that the intersection of their labels is nonempty, $\labl_{\gamma }\cap \labl_{\gamma'}\neq \varnothing$.

The following result is not hard to see with the natural reasoning.
\begin{lem}\label{lem: nu-equivalence secondary vertices}
The relations $\equiv_1$ and $\equiv_2$ are equivalence relations. \qed
\end{lem}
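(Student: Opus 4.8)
The plan is to verify, for each fixed $\nu\in\{1,2\}$, the three defining properties of an equivalence relation on the set $\Vertexs_{_{{0}\leftarrow \nu}}\,\Ati_{\!H}\times\Vertexs_{_{{0}\leftarrow \nu}}\,\Ati_{\!K}$, observing that reflexivity and symmetry are formal, whereas transitivity is the only point where the hypothesis that $\Ati_{\!H}$ and $\Ati_{\!K}$ are \emph{reduced} is genuinely needed. For reflexivity, I would take a pair $(\verti,\verti')$ in the domain; by the very definition of $\Vertexs_{_{{0}\leftarrow \nu}}$, the vertex $\verti$ is adjacent to some $\nu$-secondary vertex $\vertii$ via an arc $\edgi$, and likewise $\verti'$ to some $\vertii'$ via $\edgi'$. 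The \emph{degenerate} $\nu$-elementary walks $\gamma=\verti\,\edgi^{-1}\vertii\,\edgi\,\verti$ and $\gamma'=\verti'\,\edgi'^{-1}\vertii'\,\edgi'\,\verti'$ then satisfy $1=\labl_{\edgi}^{-1}\cdot 1\cdot\labl_{\edgi}\in\labl_{\gamma}$ and $1\in\labl_{\gamma'}$, because $\labl_{\vertii}\leqslant G_\nu$ is a subgroup; hence $1\in\labl_{\gamma}\cap\labl_{\gamma'}$, witnessing $(\verti,\verti')\equiv_\nu(\verti,\verti')$. Symmetry is immediate: replacing the witnessing walks by their inverses uses $\labl_{\gamma^{-1}}=\labl_{\gamma}^{-1}$, so any $g\in\labl_{\gamma}\cap\labl_{\gamma'}$ yields $g^{-1}\in\labl_{\gamma^{-1}}\cap\labl_{\gamma'^{-1}}$.

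For transitivity I would assume $(\verti_1,\verti'_1)\equiv_\nu(\verti_2,\verti'_2)$ and $(\verti_2,\verti'_2)\equiv_\nu(\verti_3,\verti'_3)$ and work first inside $\Ati_{\!H}$. The hypotheses provide $\nu$-elementary walks $\alpha=\verti_1\edgi_1^{-1}\vertii_1\edgi_2\verti_2$ and $\beta=\verti_2\edgi_3^{-1}\vertii_2\edgi_4\verti_3$. The key observation is that the primary vertex $\verti_2$ is then incident to the two $\nu$-arcs $\edgi_2$ and $\edgi_3$ (both pointing into $\verti_2$ from a $\nu$-secondary vertex); the determinism condition~\ref{item: reduced deterministic} of reducedness forces them to be the same arc, so $\vertii_1=\vertii_2=:\vertii$ and $\edgi_2=\edgi_3$. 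This lets me replace the non-elementary concatenation $\alpha\beta$ (of length four) by the genuine $\nu$-elementary walk $\delta=\verti_1\edgi_1^{-1}\vertii\,\edgi_4\verti_3$ from $\verti_1$ to $\verti_3$, whose label collapses correctly:
\[
\labl_{\alpha}\cdot\labl_{\beta}
=\bigl(\labl_{\edgi_1}^{-1}\labl_{\vertii}\labl_{\edgi_2}\bigr)\bigl(\labl_{\edgi_2}^{-1}\labl_{\vertii}\labl_{\edgi_4}\bigr)
=\labl_{\edgi_1}^{-1}\labl_{\vertii}\labl_{\edgi_4}
=\labl_{\delta},
\]
where I used that $\labl_{\edgi_2}\labl_{\edgi_2}^{-1}=1$ and that the subgroup label is idempotent, $\labl_{\vertii}\labl_{\vertii}=\labl_{\vertii}$. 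Running the identical argument in $\Ati_{\!K}$ produces a $\nu$-elementary walk $\delta'$ from $\verti'_1$ to $\verti'_3$ with $\labl_{\delta'}=\labl_{\alpha'}\cdot\labl_{\beta'}$. Finally, picking $g\in\labl_{\alpha}\cap\labl_{\alpha'}$ and $h\in\labl_{\beta}\cap\labl_{\beta'}$, the product $gh$ lies in $\labl_{\alpha}\labl_{\beta}=\labl_{\delta}$ and in $\labl_{\alpha'}\labl_{\beta'}=\labl_{\delta'}$, so $\labl_{\delta}\cap\labl_{\delta'}\neq\varnothing$ and $(\verti_1,\verti'_1)\equiv_\nu(\verti_3,\verti'_3)$.

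I expect transitivity to be the only real obstacle, precisely because the obvious candidate witness $\alpha\beta$ is a walk of length four rather than an elementary walk of length two, so it cannot directly certify the relation. The resolution rests on the two structural features of reduced automata exploited above: determinism makes the two intermediate secondary vertices coincide and the two middle arcs equal, while the subgroup (hence idempotent) nature of the secondary-vertex labels lets the middle arc cancel so that the walk labels \emph{multiply} to the label of the shortened elementary walk. This multiplicativity, $\labl_{\delta}=\labl_{\alpha}\labl_{\beta}$, is exactly what transports a pair of nonempty intersections into a nonempty intersection of the composite labels, closing the argument.

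\end{document}
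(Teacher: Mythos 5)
Your proof is correct and is precisely the ``natural reasoning'' the paper leaves unwritten: reflexivity via degenerate elementary walks (using $1\in\labl_{\vertii}$), symmetry via $\labl_{\gamma^{-1}}=\labl_{\gamma}^{-1}$, and transitivity via the determinism condition~(ii) of reducedness forcing $\edgi_2=\edgi_3$ followed by the merging computation $(\labl_{\edgi_1}^{-1}\labl_{\vertii}\labl_{\edgi_2})(\labl_{\edgi_2}^{-1}\labl_{\vertii}\labl_{\edgi_4})=\labl_{\edgi_1}^{-1}\labl_{\vertii}\labl_{\edgi_4}$. This merging step is exactly the same manipulation the paper does carry out explicitly in the proof of Lemma~\ref{alternating}, so your argument matches the intended one.
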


Now, define the $\nu$-secondary vertices of $\Atipr$ to be the equivalence classes modulo $\equiv_{\nu}$, namely
$
\Vertexs_{\nu}\, \Atipr =
\allowbreak
(\Vertexs_{_{{0}\leftarrow \nu}}\, \Ati_{\!H} \times \Vertexs_{_{{0}\leftarrow \nu}}\, \Ati_{\!K} )\, /  \equiv_{\nu}
$. 
Finally, define the $\nu$-arcs of $\Atipr$ as
$
\Edgi_{\nu} \Atipr =  \set{\,\bsf{q} \xarc{} \bsf{p} \mid \bsf{q}\in \Vertexs_{\nu}\,\Atipr,\ \bsf{p}\in \bsf{q}\,}
$, 
\ie for each secondary vertex $\bsf{q} \in \Vertexs_{\nu} \Atipr$, and each primary vertex $\bsf{p}=(\verti,\verti')\in \bsf{q}$, add a $\nu$-arc $\bsf{q} \xarc{} \bsf{p}$. This finishes the definition of the underlying digraph of $\Atipr$. Observe that $\Atipr$ may not be connected in general, even with $\Ati_{\!H} $ and $\Ati_{\!K} $ being so.

By construction, it is clear that, for $\nu =1,2$, every primary vertex of $\Atipr$ is adjacent to at most one $\nu$-secondary vertex of $\Atipr$ through at most one arc. We define now natural projections (digraph homomorphisms) from $\Atipr$ to $\Ati_{\!H}$ and $\Ati_{\!K}$. Define $\pi \colon \Atipr \to \Ati_{\!H}$ as follows: for primary vertices, take the projection to the first coordinate, $\pi \colon \Vertexs_{0}\Atipr \to \Vertexs_{0}\Ati_{\!H}$, $(\verti,\verti') \mapsto \verti$; for $\nu$-secondary vertices, assign to every vertex $\bsf{q} \in \Vertexs_{\nu}\,\Atipr$ the only $\nu$-secondary vertex in $\Ati_{\!H} $ adjacent to every $\verti = (\verti,\verti')\pi \in \Vertexs_{0}\,\Ati_{\!H}$ for $(\verti,\verti')\in \bsf{q}$; finally, for $\nu$-arcs,
assign to every $\bsf{q} \xarc{} \bsf{p}$ (for $\bsf{q} \in \Vertexs_{\nu}\Atipr$ and $(\verti,\verti')\in \bsf{q}$) the unique $\nu$-arc in $\Ati_{\!H}$ from $\vertii= \bsf{q}\pi$ to $\verti= (\verti,\verti')\pi$.
Clearly, $\pi$ is a well defined digraph homomorphism, called the \defin{projection} to $\Ati_{\!H} $. The projection to $\Ati_{\!K} $, denoted by $\pi'\colon \Atipr\to \Ati_{\!K} $, is defined in an analogous way.

It remains to establish the labels for its vertices and arcs. For $\nu=1,2$, and for every $\nu$-secondary vertex $\bsf{q}\in \Vertexs_{\nu}\Atipr$, choose a distinguished primary vertex $\bsf{p}_{\bsf{q}} = (\verti_{\bsf{q}},\verti'_{\bsf{q}})\in \bsf{q}$, and let $\bsf{e}_{\bsf{q}}$ be the only arc in $\Atipr$ form $\bsf{q}$ to the representative $\bsf{p}_{\bsf{q}}$. This means that, in $\Ati_{\!H}$, there is a $\nu$-arc $\edgi_{\bsf{q}} \coloneqq (\bsf{e}_{\bsf{q}}) \pi \in \Edgi_{\nu}\,\Ati_{\!H}$ from $\vertii=\bsf{q} \pi \in \Vertexs_{\nu}\,\Ati_{\!H}$ to $\verti_{\bsf{q}}$, and, in $\Ati_{\!K}$, there is a $\nu$-arc $\edgi'_{\bsf{q}} \coloneqq (\bsf{e}_{\bsf{q}}) \pi'  \in \Edgi_{\nu}\Ati_{\!K}$ from $\vertii'=\bsf{q} \pi' \in\Vertexs_{\nu}\Ati_{\!K}$ to $\verti'_{\bsf{q}}$. Then, we define the label of vertex $\bsf{q}$ as
 \begin{equation}\label{label-c}
\labl_{\bsf{q}} \,:=\, \labl_{\edgi_{\bsf{q}}}^{-1}\, \labl_\vertii \, \labl_{\edgi_{\bsf{q}}} \,\cap\, \labl_{\edgi'_{\bsf{q}}}^{-1}\, \labl_{\vertii'} \, \labl_{\edgi'_{\bsf{q}}} = \labl_\vertii^{\,\labl_{\edgi_{\bsf{q}}}} \,\cap\,
\labl_{\vertii'}^{\,\labl_{\edgi'_{\bsf{q}}}} \,\leqslant\, G_{\nu} \,.
 \end{equation}
Finally, for any $\nu$-arc $\bsf{e} \in \Edgi_{\nu}\,\Atipr$ from $\bsf{q}$ to a primary vertex $\bsf{p} = (\verti,\verti') \in \bsf{q}$, call $\edgi \coloneqq \bsf{e}\pi$, $\edgi' \coloneqq \bsf{e}\pi'$, and define the label of $\bsf{e}$ as an arbitrary element from the coset intersection $\labl_{\bsf{e}} \in \labl_{ \edgi_{\bsf{q}}}^{-1} \labl_\vertii \labl_{\edgi} \cap \labl_{ \edgi'_{\bsf{q}}}^{-1} \labl_{\vertii'} \labl_{\edgi'}$, which is nonempty since $(\verti_{\bsf{q}}, \verti'_{\bsf{q}})\equiv_{\nu} (\verti,\verti')$ by construction; see~\Cref{fig: secondary vertex and labels in Atipr}. Note that, in particular, we can take $\labl_{\bsf{e}_{\bsf{q}}}=1$.

\ffigure{
\begin{tikzpicture}[shorten >=1pt, node distance=0.75 and 0.75, on grid,auto,>=stealth',baseline=0pt]
\node[] (0) [] {};
\node[state] (p1) [below = 1 of 0]{};
\node[sstate1] (q1) [below left = of p1] {$\scriptstyle{C}$};
\node[state] (p2) [below right = of q1] {};
\node[state] (p'1) [right = 1 of 0] {};
\node[sstate1] (q'1) [above right =  of p'1] {$\scriptstyle{C'}$};
\node[state] (p'2) [below right =   of q'1] {};
\node[] (aux) [right = 1 of p1] {};
\node[state] (pp1) [below = 1 of p'2] {};
\node[state] (pp2) [right= 1 of p2] {};
\node[sstate1] (qq) [below right= 3.5 and 3.5 of 0] {$\scriptstyle{C^{g_1} \cap C'^{g'_1}}$};
\path[->] (q1) edge[] node[pos=0.4,above left=-0.05] {${\gpi_2}$} (p1);
\path[->] (q1) edge[] node[pos=0.4,below left=-0.05] {${\gpi_1}$} (p2);
\path[->] (q'1) edge[] node[pos=0.4,above left=-0.05] {${\gpi'_1}$} (p'1);
\path[->] (q'1) edge[] node[pos=0.4,above right=-0.05] {${\gpi'_2}$} (p'2);
\path[->] (qq) edge[] node[pos=0.4,above] {${\trivial}$} (pp2);
\path[->] (qq) edge[] node[pos=0.4,above right] {${g \ \in\  g_1^{-1} C g_2 \ \cap\ {g'_1}^{-1} C' g'_2}$} (pp1);
\node[draw,circle,rounded corners,thick,dashed,red!50,fit=(pp1) (pp2)] {};
\end{tikzpicture}
}
{Secondary vertex and related labels in $\Atipr$} {fig: secondary vertex and labels in Atipr}

This completes the definition of the product automaton $\Atipr$ (technically depending on some choices made on the way). The main property of the labels defined for $\Atipr$ is expressed in the following lemma.

\begin{lem}\label{lem: wedge label}
Let $\Atipr$ be the product of two reduced wedge automata $\Ati_{\!H}$ and $\Ati_{\!K}$. Then, for every $\nu$-elementary \walk\ $\bm{\walki}$ in $\Atipr$, the projected \walk s $\bm{\walki}\pi$ and $\bm{\walki}\pi'$ are \mbox{$\nu$-elementary} in $\Ati_{\!H}$ and $\Ati_{\!K}$, respectively, and we have $\labl_{\bm{\walki}}=\labl_{\bm{\walki}\pi} \cap \labl_{\bm{\walki}\pi'}$. Furthermore, $\bm{\walki}$ is degenerate if and only if both $\bm{\walki}\pi$ and $\bm{\walki}\pi'$ are degenerate.
\end{lem}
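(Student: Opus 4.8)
The plan is to exploit the fact that both the labels and the incidences of $\Atipr$ are defined ``coordinate-wise'' through the two projections $\pi,\pi'$, so that each of the three assertions reduces to unwinding one definition. Write the walk as $\bm{\walki}=\bsf{p}_1(\bsf{e}_1^{-1}\bsf{q}\,\bsf{e}_2)\bsf{p}_2$ with $\bsf{q}\in\Vertexs_\nu\Atipr$, and abbreviate $\vertii=\bsf{q}\pi$, $\vertii'=\bsf{q}\pi'$, $\edgi_i=\bsf{e}_i\pi$ and $\edgi'_i=\bsf{e}_i\pi'$ for $i=1,2$.

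First I would dispatch the easy claim: since $\pi$ and $\pi'$ are digraph homomorphisms sending $\nu$-secondary vertices to $\nu$-secondary vertices and $\nu$-arcs to $\nu$-arcs, the images $\bm{\walki}\pi=\verti_1(\edgi_1^{-1}\vertii\,\edgi_2)\verti_2$ and $\bm{\walki}\pi'=\verti'_1((\edgi'_1)^{-1}\vertii'\,\edgi'_2)\verti'_2$ are automatically walks of length $2$ through a $\nu$-secondary vertex, hence $\nu$-elementary. This also pins down the three labels to compare, namely $\labl_{\bm{\walki}}=\labl_{\bsf{e}_1}^{-1}\labl_{\bsf{q}}\labl_{\bsf{e}_2}$ against $\labl_{\bm{\walki}\pi}=\labl_{\edgi_1}^{-1}\labl_\vertii\labl_{\edgi_2}$ and $\labl_{\bm{\walki}\pi'}=\labl_{\edgi'_1}^{-1}\labl_{\vertii'}\labl_{\edgi'_2}$.

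The heart of the proof is the equality $\labl_{\bm{\walki}}=\labl_{\bm{\walki}\pi}\cap\labl_{\bm{\walki}\pi'}$, and the mechanism is the two descriptions the construction forces on each arc label, $\labl_{\bsf{e}_i}=\labl_{\edgi_{\bsf{q}}}^{-1}a_i\labl_{\edgi_i}=\labl_{\edgi'_{\bsf{q}}}^{-1}a'_i\labl_{\edgi'_i}$ with $a_i\in\labl_\vertii$ and $a'_i\in\labl_{\vertii'}$, together with the definition $\labl_{\bsf{q}}=\labl_\vertii^{\,\labl_{\edgi_{\bsf{q}}}}\cap\labl_{\vertii'}^{\,\labl_{\edgi'_{\bsf{q}}}}$ from~\eqref{label-c}. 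For the inclusion $\subseteq$ I would plug an arbitrary $c_0\in\labl_{\bsf{q}}$, written as $c_0=\labl_{\edgi_{\bsf{q}}}^{-1}b\,\labl_{\edgi_{\bsf{q}}}$ with $b\in\labl_\vertii$, into $\labl_{\bsf{e}_1}^{-1}c_0\labl_{\bsf{e}_2}$; the conjugating factors $\labl_{\edgi_{\bsf{q}}}$ telescope and leave $\labl_{\edgi_1}^{-1}(a_1^{-1}b\,a_2)\labl_{\edgi_2}\in\labl_{\bm{\walki}\pi}$, while the primed description of the same element places it in $\labl_{\bm{\walki}\pi'}$. The reverse inclusion is where the real subtlety lies: given $g=\labl_{\edgi_1}^{-1}a\,\labl_{\edgi_2}=\labl_{\edgi'_1}^{-1}a'\labl_{\edgi'_2}$ in the intersection (with $a\in\labl_\vertii$, $a'\in\labl_{\vertii'}$), I would reverse-engineer a conjugator by setting $b:=a_1 a\,a_2^{-1}\in\labl_\vertii$ and $b':=a'_1 a'\,(a'_2)^{-1}\in\labl_{\vertii'}$, and forming $c_0:=\labl_{\edgi_{\bsf{q}}}^{-1}b\,\labl_{\edgi_{\bsf{q}}}$ and $c'_0:=\labl_{\edgi'_{\bsf{q}}}^{-1}b'\labl_{\edgi'_{\bsf{q}}}$. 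A priori $c_0$ only lies in $\labl_\vertii^{\,\labl_{\edgi_{\bsf{q}}}}$ and $c'_0$ only in $\labl_{\vertii'}^{\,\labl_{\edgi'_{\bsf{q}}}}$, so neither is visibly in $\labl_{\bsf{q}}$. The crucial step — which I expect to be the only genuinely delicate point of the lemma — is to observe that the previous computation run backwards yields $\labl_{\bsf{e}_1}^{-1}c_0\labl_{\bsf{e}_2}=g=\labl_{\bsf{e}_1}^{-1}c'_0\labl_{\bsf{e}_2}$; since $\labl_{\bsf{e}_1},\labl_{\bsf{e}_2}$ are \emph{fixed group elements}, cancellation forces $c_0=c'_0$, so this common value lies in both conjugated subgroups, hence in $\labl_{\bsf{q}}$, and $g=\labl_{\bsf{e}_1}^{-1}c_0\labl_{\bsf{e}_2}\in\labl_{\bm{\walki}}$.

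Finally, for the degeneracy equivalence the forward direction is immediate, since $\bsf{e}_1=\bsf{e}_2$ projects to $\edgi_1=\edgi_2$ and $\edgi'_1=\edgi'_2$. For the converse I would invoke reducedness of the factors: if $\edgi_1=\edgi_2$ and $\edgi'_1=\edgi'_2$, then comparing terminal vertices — and using that condition~(ii) of~\Cref{def: reduced wedge automata} makes each primary vertex of $\Ati_{\!H}$ and $\Ati_{\!K}$ incident to at most one $\nu$-arc — yields $\verti_1=\verti_2$ and $\verti'_1=\verti'_2$, whence $\bsf{p}_1=(\verti_1,\verti'_1)=(\verti_2,\verti'_2)=\bsf{p}_2$. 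Since, by the construction of $\Atipr$, there is exactly one $\nu$-arc from $\bsf{q}$ to each primary vertex in the class $\bsf{q}$, this forces $\bsf{e}_1=\bsf{e}_2$, i.e.\ $\bm{\walki}$ is degenerate.
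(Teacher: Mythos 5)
Your proof is correct and takes essentially the same route as the paper's: the ``genuinely delicate point'' you isolate in the $\supseteq$ direction (cancellation forcing $c_0=c'_0$) is precisely the fact, used silently in the paper's one-line chain of equalities, that translation by the fixed elements $\labl_{\bsf{e}_1},\labl_{\bsf{e}_2}$ is bijective and hence passes through the intersection defining $\labl_{\bsf{q}}$ in~\eqref{label-c}. Your explicit verification of the degeneracy equivalence (via uniqueness of the arc from $\bsf{q}$ to each primary vertex of its class) is a correct elaboration of what the paper simply declares clear.
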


\begin{proof}
Clearly, the projections by $\pi$ and $\pi'$ of $\nu$-elementary \walk s in $\Atipr$ are also \mbox{$\nu$-elementary} \walk s (in $\Ati_{\!H} $ and $\Ati_{\!K} $, respectively), with the original one being degenerate if and only if both projections are degenerate (note that $\bm{\walki}$ may be nondegenerate with one (and only one) of $\bm{\walki}\pi$ and $\bm{\walki}\pi'$ being degenerate; see~\Cref{fig: secondary vertex and labels in Atipr}).

To see the equality in labels, let $\bm{\walki}=\bsf{\verti_1} \bsf{\edgi}^{-1}_{\bsf{1}}\bsf{q}\bsf{\edgi_2} \bsf{\verti_2}$ be a $\nu$-elemen\-ta\-ry \walk\ in $\Atipr$, where $\bsf{\verti_1} =(\verti_1,\verti'_1)$ and $\bsf{\verti_2}=(\verti_2,\verti'_2)$, and let $\bm{\walki}\pi=\verti_1  \edgi_1^{-1} \vertii \edgi_2 \verti_2$ and $\bm{\walki}\pi'=\verti'_1  {(\edgi'_1)}^{-1}\vertii'\edgi'_2 \verti'_2$ be the corresponding \mbox{$\nu$-elementary} \walk s in $\Ati_{\!H} $ and $\Ati_{\!K} $, respectively. We have $\labl_{\bm{\walki}\pi}= \labl_{\edgi_1}^{-1} \,\labl_\vertii\, \labl_{\edgi_2}$ and $\labl_{\bm{\walki}\pi'} = \labl_{ \edgi'_1}^{-1}\, \labl_{\vertii'}\, \labl_{\edgi'_2}$.

Now, consider the distinguished $\nu$-arc $\bsf{e}_{\bsf{q}}$ incident to $\bsf{q}$ (possibly equal to $\bsf{e_1}$ and/or $\bsf{e_2}$). According to the above definitions, we have $\labl_{\bsf{q}}=\labl_\vertii^{\labl_{\edgi_{\bsf{q}}}} \cap \labl_{\vertii'}^{\labl_{\edgi_{\bsf{q'}}}}$ and $\labl_{\bsf{\edgi_i}} \in \labl_{\edgi_{\bsf{q}}}^{-1} \labl_\vertii \labl_{\edgi_i} \cap \labl_{\edgi_{\bsf{q}'}}^{-1} \labl_{\vertii'} \labl_{ \edgi'_i}$, for $i=1,2$; see~\Cref{fig: secondary vertex and labels in Atipr}. Therefore,
 \begin{equation}
\labl_{\bm{\walki}} =\labl_{\bsf{\edgi_1}}^{-1} \labl_{\bsf{q}} \labl_{\bsf{\edgi_2}} =\labl_{\bsf{\edgi_1}}^{-1}\cdot \left(\labl_\vertii^{\labl_{\edgi_{\bsf{q}}}} \cap \labl_{\vertii'}^{\labl_{\edgi_{\bsf{q}'}}}\right)\cdot \labl_{\bsf{\edgi_2}}
= \labl_{\edgi_1}^{-1} \labl_\vertii \labl_{\edgi_2} \cap \labl_{ \edgi'_1}^{-1} \labl_{\vertii'} \labl_{\edgi'_2}=\labl_{\bm{\walki}\pi}\cap \labl_{\bm{\walki}\pi'}.
 \end{equation}
This completes the proof.
\end{proof}

We can now state the definition of junction automaton $\junction{H}{K}$ and show that it is a reduced $(\Gpfi,\Gpfii)$-wedge automaton such that~$\recg{\junction{H}{K}} =H\cap K$.

\begin{defn}
With the above notation, we define the \defin{junction automaton} of $\Ati_{\!H}$ and $\Ati_{\!K}$, denoted by $\junction{H}{K}$, as the connected component of $\Atipr$ containing the basepoint. (Recall that there are some arbitrary choices made on the way so, $\junction{H}{K}$ is not canonically associated to $\Ati_{\!H}$ and $\Ati_{\!K}$.)
\end{defn}

\begin{prop}\label{prop: junction recognizes itersection}
The junction automaton $\junction{H}{K}$ is a $(\Gpfi,\Gpfii)$-reduced automaton recognizing the subgroup $H\cap K$.
\end{prop}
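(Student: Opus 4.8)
The plan is to verify, in turn, that $\junction{H}{K}$ meets the three defining conditions of a reduced wedge automaton (\Cref{def: reduced wedge automata}), and then to establish $\recg{\junction{H}{K}} = H\cap K$ by a double inclusion, in both cases leaning on the two projections $\pi\colon\Atipr\to\Ati_{\!H}$ and $\pi'\colon\Atipr\to\Ati_{\!K}$ and on the fundamental label identity of \Cref{lem: wedge label}. Since $\junction{H}{K}$ is by definition the connected component of the wedge automaton $\Atipr$ containing $\bp$, it is itself a $(\Gpfi,\Gpfii)$-wedge automaton, so only reducedness and recognition require argument.

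For reducedness, condition~(i) is immediate: being a connected component, $\junction{H}{K}$ is connected. For condition~(ii) I would invoke the construction of $\Atipr$ directly: its $\nu$-secondary vertices are the $\equiv_\nu$-classes, which partition a subset of $\Vertexs_0\Ati_{\!H}\times\Vertexs_0\Ati_{\!K}$, so each primary vertex lies in at most one $\nu$-class and receives from it at most one $\nu$-arc. For condition~(iii), I would argue by contradiction: if some nondegenerate $\nu$-elementary walk $\bm{\walki}$ of $\junction{H}{K}$ read the trivial element, then by \Cref{lem: wedge label} we have $\labl_{\bm{\walki}}=\labl_{\bm{\walki}\pi}\cap\labl_{\bm{\walki}\pi'}$ and at least one of the projections is again nondegenerate; that projection would be a nondegenerate elementary walk of the reduced automaton $\Ati_{\!H}$ or $\Ati_{\!K}$ reading $1$, contradicting condition~(iii) there.

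The inclusion $\recg{\junction{H}{K}}\subseteq H\cap K$ is the easy half. Any $g\in\recg{\junction{H}{K}}$ lies in $\labl_{\bm{\walki}}$ for some $\bp$-walk $\bm{\walki}$ of $\Atipr$; writing its elementary decomposition $\bm{\walki}=\bm{\walki}_1\cdots\bm{\walki}_r$, applying \Cref{lem: wedge label} to each factor, and using the set-theoretic inclusion $\prod_i(S_i\cap T_i)\subseteq(\prod_iS_i)\cap(\prod_iT_i)$, I obtain $\labl_{\bm{\walki}}\subseteq\labl_{\bm{\walki}\pi}\cap\labl_{\bm{\walki}\pi'}$. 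Since $\pi,\pi'$ carry $\bp$ to $\bp_H,\bp_K$, the walks $\bm{\walki}\pi$ and $\bm{\walki}\pi'$ are $\bp_H$- and $\bp_K$-walks, so their labels lie in $H=\recg{\Ati_{\!H}}$ and $K=\recg{\Ati_{\!K}}$ respectively; hence $g\in H\cap K$.

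The reverse inclusion $H\cap K\subseteq\recg{\junction{H}{K}}$ is where the main obstacle lies, and the heart of the argument is a synchronization of normal forms. Given $g\in H\cap K$, reducedness of the factors together with \Cref{alternating} lets me realize $g$ as $g\in\red{\labl}_{\walki}$ for an alternating $\bp$-walk $\walki=\walki_1\cdots\walki_r$ of $\Ati_{\!H}$ and as $g\in\red{\labl}_{\walki'}$ for an alternating $\bp$-walk $\walki'=\walki'_1\cdots\walki'_s$ of $\Ati_{\!K}$. The crucial point, by the remark following \Cref{alternating}, is that the elementary decompositions of these alternating walks reproduce bracket-by-bracket the \emph{syllable decomposition} of $g$ in $\Gpfi*\Gpfii$; uniqueness of normal form then forces $r=s$, matching types $\nu_1,\ldots,\nu_r$, and coinciding syllables $g_i$, so that $g_i\in\labl_{\walki_i}\cap\labl_{\walki'_i}$ for every $i$. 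This synchronization is exactly what allows me to lift to the product: the pairs $\bsf{\verti_i}=(\verti_i,\verti'_i)$ of corresponding primary vertices satisfy $\bsf{\verti_{i-1}}\equiv_{\nu_i}\bsf{\verti_i}$ (witnessed by $\walki_i,\walki'_i$ sharing $g_i$), yielding $\nu_i$-secondary vertices $\bsf{q}_i$ of $\Atipr$ and hence a $\bp$-walk $\bm{\walki}=\bm{\walki}_1\cdots\bm{\walki}_r$ with $\bm{\walki}_i\pi=\walki_i$ and $\bm{\walki}_i\pi'=\walki'_i$ (these projections being recovered by determinism of the factors). \Cref{lem: wedge label} then gives $g_i\in\labl_{\bm{\walki}_i}$, whence $g=g_1\cdots g_r\in\labl_{\bm{\walki}}$; and since $\bm{\walki}$ starts and ends at $\bp$ it lies entirely in the connected component $\junction{H}{K}$, so $g\in\recg{\junction{H}{K}}$. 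I expect this synchronization step to be the delicate part, since it is precisely what converts the separate memberships $g\in H$ and $g\in K$ into one walk in the product, and it works only because reduced automata read elements along their normal forms.
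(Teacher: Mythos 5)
Your proposal is correct and follows essentially the same route as the paper's proof: reducedness is checked via the two projections together with \Cref{lem: wedge label}, the inclusion $\recg{\junction{H}{K}}\subseteq H\cap K$ by projecting $\bp$-walks factor-wise, and the reverse inclusion by synchronizing alternating $\bp$-walks of $\Ati_{\!H}$ and $\Ati_{\!K}$ along the syllable decomposition of $g$ using \Cref{alternating} and then lifting to $\Atipr$ via the relations $\equiv_{\nu_i}$. The only differences are cosmetic: you spell out two steps the paper leaves implicit (uniqueness of the normal form forcing the two elementary decompositions to have equal length, matching types and equal syllables, and determinism of the factors guaranteeing $\bm{\walki}_i\pi=\walki_i$ and $\bm{\walki}_i\pi'=\walki'_i$), which if anything makes the argument more self-contained.
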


\begin{proof}
It is clear that the junction automaton satisfies the properties~(i) and~(ii) in~\Cref{def: reduced wedge automata}.

To see property~(iii), take $\nu=1,2$, let $\bsf{q}\in \Vertexs_{\nu}\,\Atipr$ be a $\nu$-secondary vertex of $\Atipr$, let $\bsf{e_1}, \bsf{e_2}$ be two \emph{different} $\nu$-arcs from $\bsf{q}$ to $\bsf{\verti_1}=(\verti_1,\verti_1')$ and $\bsf{\verti_2}= (\verti_2,\verti_2')$, respectively, and consider the nondegenerate elementary \walk\ $\bm{\walki} =\bsf{p}\bsf{e}^{-1}_{\bsf{1}}\bsf{q}\bsf{e_2} \bsf{\verti_2}$. By symmetry, we can assume $\verti_1 \neq \verti_2$, \ie that $\bm{\walki}\pi =\verti_1 \edgi_1^{-1} \vertii  \edgi_2\verti_2$ is a nondegenerate elementary \walk\ in $\Ati_{\!H} $. Since $\Ati_{\!H} $ is reduced, $1\not\in \labl_{\edgi_1}^{-1}\, \labl_{\vertii}\, \labl_{\edgi_2} = \labl_{\bm{\walki}\pi}$ and by~\Cref{lem: wedge label}, $1\not\in \labl_{\bm{\walki} }=\labl_{\bm{\walki}\pi} \cap \labl_{\bm{\walki}\pi'}$.

It remains to show that $\recg{\junction{H}{K}} = H\cap K$. Indeed, let $\bm{\walki}$ be an arbitrary \bp-\walk\ in $\Atipr$, and let $\bm{\walki} =\boldsymbol{\walki_{1}}\cdots \boldsymbol{\walki_{r}}$ be its elementary decomposition. Clearly, the elementary decompositions of $\bm{\walki}\pi$ in $\Ati_{\!H} $, and $\bm{\walki}\pi'$ in $\Ati_{\!K} $, are $\bm{\walki}\pi =(\boldsymbol{\walki_1}\pi)\cdots (\boldsymbol{\walki_r}\pi)$ and $\bm{\walki}\pi' =(\boldsymbol{\walki_1}\pi')\cdots (\boldsymbol{\walki_r}\pi')$, respectively. Then, by~\Cref{lem: wedge label}, $\labl_{\bm{\walki}} =\labl_{\bm{\walki_{\!1}}}\cdots \labl_{\bm{\walki_{\!r}}} \subseteq \labl_{\bm{\walki_{\!1}}\pi}\cdots \labl_{\bm{\walki_{\!r}}\pi} =\labl_{(\bm{\walki_{\!1}}\pi) \cdots (\bm{\walki_{\!r}}\pi)} =\labl_{\bm{\walki}\pi} \subseteq H$. Since this is true for every $\bm{\walki}$, we deduce $\recg{\junction{H}{K}}\leqslant H$; and, by the symmetric argument, also ${\recg{\junction{H}{K}}\leqslant K}$.

For the other inclusion, take an element $g\in H\cap K$, and let $g=g_1 \cdots g_r$ be its syllable decomposition in $\Gpfi*\Gpfii$. Since $\Ati_{\!H} $ is a reduced automaton and $\recg{\Ati_H}=H$, Lemma~\ref{alternating} ensures us that $g\in \red{\labl}_{\bm{\walki}}$ for some \emph{alternating} $\bp$-\walk\ $\bm{\walki}$ from $\Ati_{\!H}$; in this situation, its elementary decomposition, $\bm{\walki} =\bm{\walki}_{\!1}\cdots \bm{\walki}_{\!r}$, corresponds to the syllable decomposition $g=g_1 \cdots g_r$, \ie $g_i \in \red{\labl}_{\bm{\walki}_{\!i}}$, for $i=1,\ldots ,r$. Similarly, there exists an \emph{alternating} $\bp$-\walk\ $\bm{\walki}'$ from $\Ati_{\!K}$, whose elementary decomposition $\bm{\walki}' =\bm{\walki}'_1\cdots \bm{\walki}'_r$ again corresponds to the syllable decomposition $g=g_1 \cdots g_r$, \ie $g_i \in \red{\labl}_{\bm{\walki}'_{\!i}}$, for $i=1,\ldots ,r$.

Write $\walki_{\!i}=\verti_{i-1}\edgi_i^{-1} \vertii_i \edgii_i \verti_i$ and $\walki'_{\!i}=\verti'_{i-1} ({\edgi_i'})^{-1} \vertii'_i \edgii'_i \verti'_i$.
Then, for each $i=1,\ldots ,r$, we have $g_i\in \labl_{\walki_{\!i}}=\labl_{\edgi_i}^{-1} \,\labl_{\vertii_i}\, \labl_{\edgii_i}$ and $g_i\in \labl_{\walki'_{\!i}}= \labl_{\edgi'_i}^{-1} \, \labl_{\vertii'_i} \, \labl_{\edgii'_i}$; so, $\varnothing \neq \labl_{\edgi_i}^{-1} \labl_{\vertii_i} \labl_{\edgii_i} \cap \labl_{\edgi'_i}^{-1} \labl_{\vertii'_i} \labl_{\edgii'_i} \subseteq G_{\nu}$. This means that $\bsf{\verti_{i-1}}=(\verti_{i-1}, \verti'_{i-1}) \equiv_{\nu_i} (\verti_i, \verti'_i)=\bsf{\verti_i}$\,, where $\nu_i$ is the common type of the vertices $\vertii_i$ (in $\Ati_{\!H} $) and $\vertii'_i$ (in $\Ati_{\!K} $). Therefore, $\bsf{\verti_{i-1}}$ and $\bsf{\verti_{i}}$ are both incident to a common $\nu_i$-secondary vertex in $\junction{H}{K}$. In other words, there is a $\nu_i$-elementary \walk\ in $\junction{H}{K}$, say $\boldsymbol{\walki_{\!i}}$, from~$\bsf{\verti_{i-1}}$ to $\bsf{\verti_{i}}$ . Finally, by~\Cref{lem: wedge label}, $g_i \in \labl_{\walki_{\!i}}\cap \labl_{\walki'_{\!i}} =\labl_{\bm{\walki}_{\!i}}$. Therefore, $g=g_1 \cdots g_r \in \labl_{\boldsymbol{\walki_1}}\cdots \labl_{\boldsymbol{\walki_r}} =\labl_{\boldsymbol{\walki_1} \cdots \boldsymbol{\walki_r}} \subseteq \recg{\junction{H}{K}}$, concluding the proof.
\end{proof}

\begin{cor}\label{cor: fg int <-> fg labels}
In the above situation, $H\cap K$ is finitely generated if and only if all the vertex labels of $\junction{H}{K}$ are finitely generated. \qed
\end{cor}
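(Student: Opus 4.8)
The plan is to derive this directly from \Cref{cor: fg <-> finite type}, the only real content being the observation that the underlying graph of $\junction{H}{K}$ is \emph{always} finite, so that ``finite type'' collapses to the single condition on vertex labels. By \Cref{prop: junction recognizes itersection}, $\junction{H}{K}$ is a reduced $(\Gpfi,\Gpfii)$-automaton with $\recg{\junction{H}{K}} = H\cap K$; hence \Cref{cor: fg <-> finite type} tells us that $H\cap K$ is finitely generated if and only if $\junction{H}{K}$ is of finite type, i.e.~if and only if its underlying graph has finite rank \emph{and} all its vertex labels are finitely generated. The whole point is thus to show that the first of these two conditions holds automatically in the present situation.

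First I would check finiteness of the underlying graph of $\Atipr$ (and a fortiori of its connected component $\junction{H}{K}$), using that $\Ati_{\!H}$ and $\Ati_{\!K}$ are of finite type, so that in particular their underlying digraphs are finite. The set of primary vertices of the product is $\Vertexs_{0}\Atipr =\Vertexs_{0}\Ati_{\!H} \times \Vertexs_{0}\Ati_{\!K}$, a product of two finite sets, hence finite. For $\nu=1,2$, the $\nu$-secondary vertices are the classes of the equivalence relation $\equiv_{\nu}$ on a subset of $\Vertexs_{0}\Atipr$, so there are only finitely many of them as well. Finally, each arc of $\Atipr$ joins a secondary vertex to a primary one, and by construction every primary vertex is incident with at most one $\nu$-secondary vertex through at most one $\nu$-arc; thus $\Edgi_{\nu}\Atipr$ is finite too. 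Altogether $\Atipr$, and hence $\junction{H}{K}$, is a finite graph, so its rank is finite.

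With finiteness of the underlying graph established, the condition ``$\junction{H}{K}$ is of finite type'' is satisfied exactly when all of its vertex labels are finitely generated, and the equivalence claimed in the corollary follows immediately from \Cref{cor: fg <-> finite type}. I expect no genuine obstacle here: the argument is essentially bookkeeping on the product construction, and the one thing worth stating explicitly is that finiteness of $\Vertexs_{0}\Atipr$ comes for free from the cartesian-product definition once both factor automata are of finite type — this is precisely what decouples the rank condition from the label condition and allows the clean ``labels only'' characterization. (Note that, in contrast with the finite underlying graph, the labels $\labl_{\bsf{q}}$ defined in~\eqref{label-c} as intersections of conjugates of the factor labels need \emph{not} be finitely generated, which is exactly the non-Howson phenomenon the statement is isolating.)
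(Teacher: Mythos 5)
Your proposal is correct and takes essentially the same route as the paper, whose proof is left implicit: the corollary is the immediate combination of \Cref{prop: junction recognizes itersection} with \Cref{cor: fg <-> finite type}, once one notes that the underlying graph of $\Atipr$ (hence of $\junction{H}{K}$) is finite because $\Ati_{\!H}$ and $\Ati_{\!K}$ are of finite type. Your explicit bookkeeping --- primary vertices as a finite cartesian product, secondary vertices as $\equiv_{\nu}$-classes, and arcs bounded via the at-most-one-arc property --- is exactly the verification the paper leaves to the reader.
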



\subsection{Understanding intersections of cosets}

According to Lemma~\ref{lem: wedge automaton recognition}, given a wedge automaton $\Ati_{\!H}$, the union of labels of all the \walk s in $\Ati_{\!H} $ from the basepoint to a primary vertex $\verti \in \Vertexs_{0}\,\Ati_{\!H}$, denoted by $\recg{\Ati_{\!H}}_{(\sbp,\verti)}$, constitutes a coset of the recognized subgroup $\recg{\Ati_{\!H}} = H$. In general, though, this does not reflect all the cosets of $H$ (consider, for example the cases when $\Ati_{\!H}$ has only finitely many primary vertices, but $H$ has infinite index in $G_1 *G_2$). We can slightly modify the automaton~$\Ati_{\!H}$ to fix this issue.

Let $u=a_1 b_1\cdots a_s b_s\in \Gpfi*\Gpfii$, written in normal form. Consider the $(\Gpfi,\Gpfii)$-wedge automaton (also denoted by $u$) consisting on a chain spelling the normal form for $g$, and having trivial vertex label; let us call it the \defin{thread} for $u$. Attach this thread to $\Ati_{\!H} $ by identifying the basepoint $\bp_{H}$ with $\start u$, and then apply the folding process until no more foldings are possible (see the proof of~\Cref{thm: wedge reduced construction}). Observe that operation~(II) will not be used, and the triviality of the vertex labels in the thread implies that the vertex groups already present in $\Ati_{\!H} $ will not be changed along the process. So, the output is the exact same graph $\Ati_{\!H} $ with a terminal segment of the thread attached somewhere and sticking out; denote this new automaton by $\Ati_{Hu}$. Clearly, $\Ati_{Hu}$ is a reduced automaton, like $\Ati_{\!H} $, and furthermore, since the new secondary vertices out of $\Ati_{\!H} $ have trivial label, $\recg{\Ati_{Hu}}=\recg{\Ati_H}=H$.

By Lemma~\ref{lem: wedge automaton recognition}~(iii), $\recg{\Ati_{Hu}}_{(\sbp_H,\term u)} = \recg{\Ati_{\!H}} \cdot u$ (the situation where this coset could already be represented by a vertex in $\Ati_{\!H} $ corresponds to the fact that the thread happens to fold completely and so, $\Ati_{Hu}=\Ati_{\!H} $).

Now let us go back to the graph $\junction{H}{K}$. It is useful to understand the intersection of $H$ and $K$ but also, adding the corresponding hairs, it will be useful to understand the intersection of two arbitrary cosets $Hu$ and $Kv$.

Given elements $u,v\in \Gpfi*\Gpfii$, consider the reduced automata $\Ati_{Hu}$ and $\Ati_{Kv}$, and consider the junction automaton $\Ati_{\!Hu} \junct \Ati_{\!Kv}$.

\begin{lem}\label{lem: int cosets}
With the above notation,
\begin{enumerate}[ind]
\item $\junction{H}{K}$ is a reduced subgraph of $\Ati_{\!Hu} \junct \Ati_{\!Kv}$;
\item $Hu \cap Kv \neq \varnothing$ if and only if the vertex $(\term u, \term v)$ belongs to $\Ati_{\!Hu} \junct \Ati_{\!Kv}$;
\item for any \walk\ $\walki$ in $\Ati_{\!Hu} \junct \Ati_{\!Kv}$ from $(\bp_H,\bp_K)$ to $(\term u, \term v)$, and any $g\in \labl_{\walki}$, we have $Hu\cap Kv =\recg{\Ati_{\!Hu} \junct \Ati_{\!Kv}}_{(\sbp_H,\sbp_K), (\term u, \term v)}=(H\cap K)g$.
\end{enumerate}
\end{lem}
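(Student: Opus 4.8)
The plan is to reduce all three parts to a single \emph{coset analogue} of~\Cref{prop: junction recognizes itersection}, namely the identity
\[
\recg{\Ati_{\!Hu} \junct \Ati_{\!Kv}}_{((\sbp_H,\sbp_K),\,(\term u,\term v))} \;=\; \recg{\Ati_{Hu}}_{(\sbp_H,\term u)} \,\cap\, \recg{\Ati_{Kv}}_{(\sbp_K,\term v)} .
\]
By the discussion preceding the lemma we already know $\recg{\Ati_{Hu}}_{(\sbp_H,\term u)} = Hu$ and $\recg{\Ati_{Kv}}_{(\sbp_K,\term v)} = Kv$ (this is \Cref{lem: wedge automaton recognition}(iii) together with the triviality of the new thread labels), so the right-hand side is exactly $Hu\cap Kv$. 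Thus everything comes down to establishing this identity.

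To prove it I would mirror the proof of~\Cref{prop: junction recognizes itersection} almost verbatim, merely replacing the two copies of the basepoint by the prescribed endpoints. For the inclusion ``$\subseteq$'', any walk $\bm{\walki}$ in the product from $(\sbp_H,\sbp_K)$ to $(\term u,\term v)$ projects under $\pi,\pi'$ to walks from $\sbp_H$ to $\term u$ in $\Ati_{Hu}$ and from $\sbp_K$ to $\term v$ in $\Ati_{Kv}$; applying~\Cref{lem: wedge label} to each elementary factor and multiplying gives $\labl_{\bm{\walki}}\subseteq \labl_{\bm{\walki}\pi}\cap\labl_{\bm{\walki}\pi'}\subseteq Hu\cap Kv$. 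For ``$\supseteq$'', given $g\in Hu\cap Kv$ with syllable decomposition $g=g_1\cdots g_r$, the coset version of~\Cref{alternating} produces alternating walks realizing $g$, one from $\sbp_H$ to $\term u$ in $\Ati_{Hu}$ and one from $\sbp_K$ to $\term v$ in $\Ati_{Kv}$, whose elementary factors correspond syllable-by-syllable to the $g_i$; the matching then forces the consecutive endpoints to be $\equiv_{\nu_i}$-equivalent, so the $g_i$ assemble into a genuine walk of the product which, being anchored at $(\sbp_H,\sbp_K)$, lies in the connected component $\Ati_{\!Hu}\junct\Ati_{\!Kv}$ and reads $g$.

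Granting the identity, the three parts follow quickly. For (ii): the vertex $(\term u,\term v)$ lies in the connected component $\Ati_{\!Hu}\junct\Ati_{\!Kv}$ iff some walk joins it to the basepoint, iff the recognized coset is nonempty, iff (by the identity) $Hu\cap Kv\neq\varnothing$. For (iii): specializing the identity to both endpoints equal to $(\sbp_H,\sbp_K)$ yields $\recg{\Ati_{\!Hu}\junct\Ati_{\!Kv}}_{(\sbp_H,\sbp_K)}=H\cap K$, and then~\Cref{lem: wedge automaton recognition}(iii) gives $\recg{\Ati_{\!Hu}\junct\Ati_{\!Kv}}_{((\sbp_H,\sbp_K),(\term u,\term v))}=(H\cap K)g$ for every $g$ in this coset, i.e.\ for every $g\in\labl_{\walki}$; combining with the identity identifies this same coset with $Hu\cap Kv$. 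For (i), I would check that restricting the product construction to the old primary vertices $\Vertexs_0\Ati_{\!H}\times\Vertexs_0\Ati_{\!K}$ reproduces $\junction{H}{K}$ exactly: the relations $\equiv_{\nu}$, the secondary vertices, the arcs and all labels among old vertices are computed from $\nu$-elementary walks and secondary labels that the threads leave untouched. Hence $\junction{H}{K}$ embeds as a subgraph of $\Ati_{\!Hu}\junct\Ati_{\!Kv}$, and it is reduced by~\Cref{prop: junction recognizes itersection}.

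The main obstacle is the ``$\supseteq$'' direction of the coset identity: one must first promote~\Cref{alternating} from closed basepoint walks to walks between arbitrary primary vertices, and then verify that the syllable decomposition of $g$ is tracked faithfully on both sides so that the matching genuinely closes up into a walk of the product. A secondary (but necessary) point is the bookkeeping in part (i): one must confirm that the hanging thread appendages, carrying only trivial secondary labels and ``sticking out'' of $\Ati_{\!H}$ and $\Ati_{\!K}$, neither merge any old $\equiv_{\nu}$-classes nor create new $\nu$-elementary walks among old primary vertices, which is precisely where the description of $\Ati_{Hu}$ as $\Ati_{\!H}$ with a folded-and-protruding thread is used.
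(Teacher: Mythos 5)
Your proposal is correct and takes essentially the same route as the paper's own proof: your coset identity is exactly what the paper establishes for parts (ii)--(iii) (one inclusion by projecting walks and invoking \Cref{lem: wedge automaton recognition}(iii), the other by the syllable-matching construction from the proof of \Cref{prop: junction recognizes itersection}), and your part (i) matches the paper's observation that the thread vertices carry trivial labels and so leave the $\equiv_{\nu}$-classes among old primary vertices unchanged. The one point you flag as an obstacle --- extending \Cref{alternating} from closed \bp-\walk s to walks between arbitrary primary vertices --- is glossed in the paper (``with an argument like in the proof of Proposition\ldots''), and indeed goes through verbatim since that proof never uses that the endpoints coincide.
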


\begin{proof}
Note that the initial set of primary vertices for $\Ati_{\!Hu} \junct \Ati_{\!Kv}$, namely $\Vertexs_{0}\,\Ati_{Hu} \times \Vertexs_{0}\,\Ati_{Kv}$, contains as a subset $\Vertexs_{0}\,\Ati_{\!H} \times \Vertexs_{0}\,\Ati_{\!K} $, the initial set of primary vertices for $\junction{H}{K}$. And two old vertices $(\verti_1,\verti'_1),\, (\verti_2,\verti'_2)\in \junction{H}{K}$ are $\equiv_{\nu}$-equivalent in $\junction{H}{K}$ if and only if they are $\equiv_{\nu}$-equivalent as vertices in $\Ati_{\!Hu} \junct \Ati_{\!Kv}$ (since vertices of $\Ati_{\!Hu} \junct \Ati_{\!Kv}$ outside $\junction{H}{K}$ have always trivial labels). This proves (i).

Suppose first that $(\term u,\term v)\in \Vertexs\,\Ati_{\!Hu} \junct \Ati_{\!Kv}$, let $\bm{\walki}$ be a \walk\ in $\Ati_{\!Hu} \junct \Ati_{\!Kv}$ from $(\bp_H,\bp_K)$ to $(\term u,\term v)$, and consider its basic label $\labl^{\bullet}_{\bm{\walki}}\in \Gpfi*\Gpfii$. By the same argument as in Proposition~\ref{prop: junction recognizes itersection}, $\labl^{\bullet}_{\bm{\walki}}\in \labl_{\bm{\walki}\pi}\cap \labl_{\bm{\walki}\pi'}$. But $\bm{\walki}\pi$ (resp., $\bm{\walki}\pi'$) is a \walk\ in $\Ati_{Hu}$ from $\bp_H$ to $\term u$ (resp., a \walk\ in $\Ati_{Kv}$ from $\bp_K$ to $\term v$) hence, by Lemma~\ref{lem: wedge automaton recognition}~(iii), $\labl^{\bullet}_{\bm{\walki}} \in \recg{\Ati_{Hu}}_{(\sbp_H,\term u)} \cap \recg{\Ati_{Kv}}_{(\sbp_K,\term v)}=Hu \cap Kv$, concluding that $Hu\cap Kv \neq \varnothing$.

Conversely, suppose that $Hu\cap Kv \neq \varnothing$ and let $g \in Hu \cap Kv$.
Again by~\Cref{lem: wedge automaton recognition}~(iii), there exist \walk s $\walki$ in $\Ati_{Hu}$ from $\bp_H$ to $\term u$, and $\walki'$ in $\Ati_{Kv}$ from $\bp_K$ to $\term v$, such that $g\in \labl_{\walki} \cap \labl_{\walki'}$. Again, with an argument like in the proof of Proposition~\ref{prop: junction recognizes itersection}, there exists a \walk\  $\bm{\walki}$ in $\Ati_{\!Hu} \junct \Ati_{\!Kv}$ from $(\bp_H,\bp_K)$ to $(\term u, \term v)$ such that $g\in \labl_{\bm{\walki}}$. In particular, $(\term u, \term v)\in \Ati_{\!Hu} \junct \Ati_{\!Kv}$. This proves (ii) and~(iii).
\end{proof}

\subsection{Proofs of~\Cref{thm: ESIP-free,thm: TIP-free}}\label{sec: free product theorems}

Let us now address the algorithmic aspects of this construction. For all the present section, assume the two starting reduced $(\Gpfi, \Gpfii)$-automata $\Ati_{\!H} $ and $\Ati_{\!K} $ to be of finite type (namely, $H$ and $K$ are finitely generated subgroups of $\Gpfi*\Gpfii$). Recall that
although the underlying graph of $\junction{H}{K}$ is finite,
the labels of the vertices in $\junction{H}{K}$  may very well be non finitely generated as a result of intersections of finitely generated subgroups of $G_1$ and of~$G_2$.

A first easy observation is that, under the assumption that both $\Gpfi$ and $\Gpfii$ are Howson, then $\junction{H}{K}$ will always be of finite type. This proves that the free product of two Howson groups is again Howson, recovering a classical result originally proved by~\namecite{baumslag_intersections_1966}.



\begin{proof}[Proof of~\Cref{thm: ESIP-free}.] Assume that both $\Gpfi$ and $\Gpfii$ have solvable \ESIP; and suppose we are given two finitely generated subgroups $H,K\leqslant \Gpfi*\Gpfii$ by finite sets of generators, and two extra elements $u,v\in \Gpfi*\Gpfii$, all of them in normal form. By~\Cref{rem: CIPfg -> MP}, both $\Gpfi$ and $\Gpfii$ also have solvable membership problem; and by~\Cref{thm: wedge reduced construction}, we can compute reduced $(\Gpfi, \Gpfii)$-automata $\Ati_{\!H} $ and $\Ati_{\!K} $ such that $\recg{\Ati_{\!H}}=H$, and~$\recg{\Ati_{\!K}}=K$.

Now let us keep constructing $\junction{H}{K}$: we start looking at the basepoint $\bp= (\bp_H,\bp_K)$, with the whole set $\Vertexs_{0}\, \Atipr =\Vertexs_{0}\,\Ati_{\!H} \times \Vertexs_{0}\,\Ati_{\!K} $ in the background. We have to keep adding $\nu$-secondary vertices (with their labels), and $\nu$-arcs (with their labels too) connecting them to certain primaries, until getting $\junction{H}{K}$, the full connected component of~$\Atipr$ containing the basepoint~\bp.

We start checking whether there exists $\nu \in \set{1,2}$, such that both $\bp_H$ and $\bp_K$ have nonempty $\nu$-neighborhoods. If not, then the basepoint~\bp\ is not adjacent to any secondary vertex in $\Atipr$, and we are done (namely, the product
is the trivial automaton, and $H \cap K =\trivial$). Otherwise, let $\vertii\in \Vertexs_{\nu}\,\Ati_{\!H} $, and $\edgi\in \Edgi_{\nu}\,\Ati_{\!H} $ with $\start \edgi=\vertii$, $\term \edgi=\bp_H$; and let $\vertii'\in \Vertexs_{\nu}\,\Ati_{\!K} $ and $\edgi'\in \Edgi_{\nu}\,\Ati_{\!K} $ with $\start \edgi'=\vertii'$, $\term \edgi'=\bp_K$, and enlarge our picture by drawing a new $\nu$-secondary vertex, say~$\bsf{q}$, and a new $\nu$-arc, say $\bsf{e} = (\edgi,\edgi')$, from $\bsf{q}$ to $\bp$. According to~\eqref{label-c} --- and with respect to the choice $(\verti_{\bsf{q}}, \verti'_{\bsf{q}})=(\bp_H, \bp_K)$ --- we know that the label of $\bsf{q}$ is $\labl_{\bsf{q}}=\labl_\vertii^{\,\labl_{\edgi}} \cap \labl_{\vertii'}^{ \labl_{\edgi'}}\leqslant G_{\nu}$.

Applying \SIP\ for $G_{\nu}$ to the (finitely generated) subgroups $\labl_\vertii^{\,\labl_{\edgi}}$ and $\labl_{\vertii'}^{\,\labl_{\edgi'}}$, we can decide whether $\labl_{\bsf{q}}$ is finitely generated or not. In case it is not, kill the whole process and declare $H\cap K$ to be non finitely generated. Otherwise, compute a finite set of generators for $\labl_{\bsf{q}}$, assign $\labl_{\bsf{e}}=\trivial$, and check which other primary vertices from $\Atipr$ are adjacent to $\bsf{q}$: $\bsf{p}=(\verti,\verti')\in \Vertexs_{0}\,\Atipr$ is adjacent to $\bsf{q}$ if and only if $(\verti,\verti')\equiv_{\nu} (\verti_{\bsf{q}},\verti'_{\bsf{q}})$, which happens if and only if there exists $\edgii\in \Edgi_{\nu}\,\Ati_{\!H} $ from $\vertii$ to $\verti$, and $\edgii'\in \Edgi_{\nu}\,\Ati_{\!K} $ from $\vertii'$ to $\verti'$, such that~$\labl_\edgi^{-1}\labl_\vertii \, \labl_\edgii \,\cap\, \labl^{-1}_{\edgi'} \labl_{\vertii'}\labl_{\edgii'} \neq \varnothing$. So, run over every $\verti\in \Vertexs_{0}\Ati_{\!H} $ adjacent to $\vertii$, and every $\verti'\in \Vertexs_{0}\Ati_{\!K} $ adjacent to $\vertii'$ and, for each such pair, check whether the intersection of (right) cosets
 \begin{equation}\label{punt-important}
\labl_\vertii^{\,\labl_\edgi} \cdot (\labl_\edgi^{-1}\labl_\edgii) \,\cap\,
\labl_{\vertii'}^{\,\labl_{\edgi'}} \cdot (\labl^{-1}_{\edgi'} \labl_{\edgii'})
= \labl_\edgi^{-1}\labl_\vertii\,\labl_\edgii \,\cap\, \labl^{-1}_{\edgi'}\labl_{\vertii'}\,\labl_{\edgii'}
 \end{equation}
is empty or not; this can be done using the above call to \ESIP\ from $G_{\nu}$, since they are right cosets of $\labl_\vertii^{\,\labl_\edgi}, \labl_{\vertii'}^{\, \labl_{\edgi'}}\leqslant G_{\nu}$, whose intersection happens to be finitely generated.

In case this intersection is not empty, add a $\nu$-arc, say $\bsf{f} = (\edgii,\edgii')$, from $\bsf{q}$ to $\bsf{p}$, and $\labl_{(\edgii,\edgii')}$ arbitrarily chosen from that nonempty intersection. After this procedure, we have a complete picture of the 1-elementary and 2-elementary \walk s in $\junction{H}{K}$ starting at the basepoint \bp.

Now, for  every $\nu=1,2$, and every primary vertex $\bsf{p} = (\verti,\verti')$ added to the picture and not yet explored, repeat the same process (with $\bsf{p}$ in place of $\bp$). Since the underlying graph of $\Atipr$ is finite, this procedure will either find a non finitely generated vertex label, or will finish the complete construction of $\junction{H}{K}$ in finite time. In the first case, we deduce the non finitely generated type of $H\cap K$; in the second case, we can compute generators for $H\cap K$ (in fact, a Kurosh decomposition) applying~\Cref{thm: from reduced to Kurosh}.

Hence, so far, we have solved $\SIP(\Gpfi *\Gpfii)$. To finish the proof, let us place ourselves in the case where $H\cap K$ is finitely generated (and so, with the junction automaton $\junction{H}{K}$ fully constructed), and let us decide whether the intersection of right cosets $Hu\cap Kv$ is empty or not. We can extend the computation of $\junction{H}{K}$ to that of $\Ati_{\!Hu} \junct \Ati_{\!Kv}$; or alternatively construct directly $\Ati_{\!Hu} \junct \Ati_{\!Kv}$ from the beginning.

It only remains to check whether the vertex $(\term u,\term v)$ appears in~$\Ati_{\!Hu} \junct \Ati_{\!Kv}$, or not. Namely, using \Cref{lem: int cosets}~(ii): $(\term u,\term v)$ is connected to $(\bp_H, \bp_K)$ if and only if the intersection $Hu\cap Kv$ is nonempty; and, if so, any element $g$ from the label of any \walk\ from $(\bp_H, \bp_K)$ to $(\term u,\term v)$ belongs to such intersection, $g\in Hu\cap Kv=(H\cap K)g$. This concludes the proof. \qed

Finally, we complement the arguments in the last proof to prove that \TIP\ also passes through free products.

\emph{Proof of~\Cref{thm: TIP-free}.} Since solvability of \TIP\ implies that of \ESIP, \Cref{thm: ESIP-free} already gives us \ESIP\ for $\Gpfi*\Gpfii$. It remains to solve \CIP\ for $\Gpfi*\Gpfii$ in the case where the given finitely generated subgroups $H,K$ have a non finitely generated intersection.

Given $H,K\leqslant \Gpfi*\Gpfii$ finitely generated, and $u,v\in \Gpfi*\Gpfii$, run the same algorithm as in the proof of~\Cref{thm: ESIP-free}: construct $\Ati_{Hu}$ and $\Ati_{Kv}$ and start building the junction $\Ati_{\!Hu} \junct \Ati_{\!Kv}$; when we encounter a secondary vertex $\bsf{q}$ whose label $\labl_{\bsf{q}} =\labl_\vertii^{ \labl_{\edgi}} \cap \labl_{\vertii'}^{\,\labl_{\edgi'}}\leqslant G_{\nu}$ is not finitely generated, instead of computing a set of generators for it (which is not possible), we just put the trivial subgroup as a label in place of $\labl_{\bsf{q}}$. Then, when analyzing which other primary vertices are adjacent to $\bsf{q}$, we need to decide if the intersection of cosets from equation~\eqref{punt-important} are empty or not: even though  $\labl_\vertii^{\,\labl_{\edgi}} \cap \labl_{\vertii'}^{\,\labl_{\edgi'}}\leqslant G_{\nu}$ is not finitely generated, the decision can be made effective using \CIP\ from $G_{\nu}$. This way, we can algorithmically complete the description of $\Ati_{\!Hu} \junct \Ati_{\!Kv}$ except that, for some secondary vertices $\bsf{q}$, instead of having generators for $\labl_{\bsf{q}}$, we just have the trivial element labelling them.

Of course, this is not enough information for computing a set of generators for $H\cap K$. But it suffices for deciding whether the vertices $(\bp_H,\bp_K)$ and $(\term w, \term w')$ belong to the same connected component of $\Ati_{\!Hu} \junct \Ati_{\!Kv}$. By Lemma~\ref{lem: int cosets}, this allows us to decide whether the intersection of cosets $Hu\cap Kv$ is empty or not; and in case it is not, we can compute an element from it, just choosing a \walk\ $\walki$ from $(\bp_H,\bp_K)$ to $(\term w, \term w')$, and then picking an element from $\labl_{\walki}$ (if $\walki$ traverses some secondary vertex with a non finitely generated label, we just recorded the trivial element from it for this purpose). This completes the proof.
\end{proof}

\subsection*{Acknowledgments}
The first two named authors acknowledge financial support from the Spanish Agencia Estatal de Investigaci\'on, through grant MTM2017-82740-P (AEI/FEDER, UE), and also the ``María de Maeztu'' Programme for Units of Excellence in R\&D (MDM-2014-0445). The third named author acknowledges support by CMUP (UID/MAT/00144/2013), which is funded by FCT (Portugal) with national (MEC) and European structural funds (FEDER), under the partnership agreement PT2020; he was also partially supported by the ERC Grant 336983, by the Basque Government grant IT974-16, by the grant MTM2014-53810-C2-2-P of the Ministerio de Economia y Competitividad of Spain, and by the Russian Foundation for Basic Research (project no. 15-01-05823).

\renewcommand{\bibfont}{\small}
\printbibliography

\end{document}